\documentclass{article}[12pt]
\usepackage{graphicx, mathtools}
\usepackage{amssymb, amsmath, amsthm, amsfonts}
\usepackage{bbm}
\usepackage{color}
\usepackage{tikz}
\usepackage{blindtext}
\usepackage{subcaption}
\usepackage{float}
\usepackage[colorinlistoftodos,prependcaption,textsize=tiny]{todonotes}
\usepackage{xargs} 
\usepackage{enumitem}
\usepackage{pgfplots}
\pgfplotsset{compat=1.18}

\newtheorem{theorem}{Theorem}[section]
\newtheorem{lemma}{Lemma}[section]
\newtheorem{proposition}{Proposition}[section]
\theoremstyle{definition}

\newtheorem{example}{Example}[section]

\newcommand{\R}{\mathbb{R}}
\newcommand{\N}{\mathbb{N}}
\newcommand{\Z}{\mathbb{Z}}
\newcommand{\pr}{\mathbb{P}}
\newcommand{\E}{\mathbb{E}}
\newcommand{\1}{\mathbbm{1}}
\newcommand{\dd }{\mathrm{d} }

\definecolor{vry}{RGB}{253, 231, 37}

\definecolor{vrg}{RGB}{94,201,98}

\definecolor{vrdg}{RGB}{33, 145, 140}

\definecolor{vrb}{RGB}{59,82,139}

\definecolor{vrp}{RGB}{68,1,84}

\definecolor{vro}{RGB}{249,142,9}
\definecolor{vrrp}{RGB}{126, 3, 167}

\definecolor{vrr}{RGB}{188,55,84}

\definecolor{vrnb}{RGB}{13,8,135}

\newcommandx{\unsure}[2][1=]{\todo[linecolor=vrr,backgroundcolor=vrr!15,bordercolor=vrr!25,#1]{#2}}
\newcommandx{\change}[2][1=]{\todo[linecolor=vrb,backgroundcolor=vrb!15,bordercolor=vrb!25,#1]{#2}}
\newcommandx{\info}[2][1=]{\todo[linecolor=vrdg,backgroundcolor=vrdg!15,bordercolor=vrdg!25,#1]{#2}}
\newcommandx{\improve}[2][1=]{\todo[linecolor=vrp,backgroundcolor=vrp!15,bordercolor=vrp!25,#1]{#2}}
\newcommandx{\thiswillnotshow}[2][1=]{\todo[disable,#1]{#2}}
\newcommandx{\add}[2][1=]{\todo[linecolor=vro,backgroundcolor=vro!15,bordercolor=vro!25,#1]{#2}}

\usepackage[colorlinks=true]{hyperref}
\hypersetup{colorlinks=true, linkcolor=vrr, citecolor=vrr,  filecolor=blue,urlcolor=vrb}

\begin{document}
\title{Annealed Survival Probability of Random Walk in an Inhomogeneous Poisson Environment of Mobile Traps}
\date{}

\author{Pradeeptha R Jain \\
  {\small \it International Centre for Theoretical Sciences (ICTS) - TIFR, Bengaluru, India.}}

\maketitle

\begin{abstract}
     We study the annealed survival probability of a random walk in an inhomogeneous Poisson environment of mobile traps on $\mathbb{Z}^d$. In dimensions $d=1,2$, we determine the asymptotics for the annealed survival probability of the walker under suitable  assumptions on the average trap intensity and study how inhomogeneity in the initial trap configuration affects these asymptotics. Our results extend the asymptotics proved for the homogeneous setting in \cite{DGRS2012}. We also establish a law of large numbers, central limit theorem and large deviation principle for the inhomogeneous Poisson trap environment in $d\ge 1$, generalising the corresponding results proved for the homogeneous case considered in \cite{CG1984}. We present a class of examples of inhomogeneous trap environments where the decay rate of survival probability can be identified and also instances where it does not decay with time.
\end{abstract}

\textit{AMS subject classification:} 60K37, 60K35, 82C22.

\textit{Keywords:} Pascal principle, inhomogeneous Poisson trap environment, trapping dynamics, cumulant expansion.

\section{Introduction}

The diffusion of a particle in space ($\Z^d$ or $\R^d$) among randomly placed traps, has been widely studied in the mathematics and physics literature. Some models include transport processes in disordered media, directed polymers in random environments, and branching random walks in random environments. 
We refer the reader to the recent review article \cite{ADS2019} and \cite{DGRS2012} for a detailed survey of the literature. 

When the traps are immobile the problem is well studied and there is extensive literature (see monographs \cite{S1998, K2016} and references therein). The first mathematical results were obtained in the continuum setting, for the problem of Brownian motion among Poissonian obstacles, where the traps are balls whose centers are placed according to a homogeneous Poisson point process on $\R^d$. The Brownian motion starts at the origin and is annihilated at a rate $\gamma$ times the number of traps it is contained in. Using large deviation techniques, Donsker and Varadhan~\cite{DV1975} showed that the annealed survival probability of the Brownian motion decays asymptotically as $\exp\{-C t^{\frac{d}{d+2}}\}$ while Sznitman~\cite{S1998} established that the corresponding quenched survival probability decays asymptotically as $\exp\{-\bar{C} \frac{t}{(\log t)^{2/d}}\}$
using the method of enlargement of obstacles. Similar results have also been obtained when the traps are placed according to an i.i.d. Bernoulli distribution \cite{A1995, B1994, DV1979} and for more general random trapping potentials in the framework of the parabolic Anderson model (see \cite{K2016} for more details).  

In contrast to the immobile traps, very little is known when the traps are mobile.  Redig~\cite{R1994} considered a trapping potential generated by a reversible Markov process, such as a homogeneous Poisson system of random walks, or the symmetric exclusion process in equilibrium, and obtained exponential upper bounds on the annealed survival probability using spectral techniques for the process of traps viewed from the random walk. In \cite{DGRS2012}, the precise asymptotics for the annealed survival probability of the random walk in $\Z^d$ was established when the mobile trapping environment was a homogeneous Poisson system of random walks. They showed that it decays 
asymptotically as $\exp\{-C t^{1/2}\}$ in $d=1$, $\exp\{-C' {\frac{t}{\log t}}\}$ in $d=2$ and exponentially in $d\ge 3$. They also showed that the corresponding quenched survival probability decays at a well defined exponential rate in all dimensions. For the same model, it was shown in \cite{ADS2017} that the annealed path of the random walk in $\Z$ (and Brownian motion in $\R$ \cite{Oz2019}) conditioned on survival until time $t$ is subdiffusive and an invariance principle was also established in \cite{ADS2025} for $d\ge 6$. The continuum analogue of the result in \cite{DGRS2012} was obtained in \cite{PSSS2013}. Further extentions to L\'evy trap model were carried out by \cite{DSS2014}. The model has also been studied in the physics literature, where in  \cite{MOBC2003, MOBC2004} the focus was on the annealed survival probability. 
Cox and Griffeath~\cite{CG1984}, following a question posed by F. Spitzer, initiated the study of occupation-time large deviations for the homogeneous Poisson system of independent random walks on $\mathbb{Z}^d$. Port~\cite{P1966, P1967} showed strong laws and central limit theorem for the same in discrete time. 

We consider the model where a random walker moves among an inhomogeneous Poisson system of moving traps on $\Z^d$. Initially at each site $y\in \Z^d$, we place Poisson$(\nu_y)$ number of traps (independently) for $\nu_y\in [0,\infty), y\in \Z^d$. Each trap independently performs random walk on $\Z^d$. The random walker is killed upon contact with a trap at a fixed rate $\gamma\in (0,\infty]$. We obtain the asymptotics of the annealed survival probability (see Section~\ref{subsec:model} for precise definition) for the above model in dimensions $d=1,2$ when $(\nu_y)_{y\in \Z^d}$  are bounded from above and on average the number of traps is minimum at the origin for all $t\ge 0$ (see Theorem~\ref{thm:annealed}).  We also prove law of large numbers, central limit theorem and large deviation results for the occupational time functionals: the number of traps at the origin and the number of distinct traps that visit the origin (see Theorem~\ref{thm:Dt} and Theorem~\ref{thm:Nt}).

The rest of the introduction is organised as follows. In Section~\ref{subsec:model}, we provide a precise description of the model and statement of the main results.
We present examples of inhomogeneous trap environments where our main results apply and instances where the annealed survival probability decays slower than the homogeneous setting considered in \cite{DGRS2012} in Section~\ref{subsec:examples}. We 
discuss the significance of our main results, state open problems and outline of the proofs of main results in Section~\ref{subsec:discuss}. 

\subsection{Model and Main Results}
\label{subsec:model}
We will now define the model precisely. Let $X := (X (t))_{t \geq 0}$ be a continuous time simple symmetric random walk  on $\mathbb{Z}^d$ with jump rate $\kappa \geq 0$. Let $(Y_j^y)_{1 \leq j \leq N_y, y \in \mathbb{Z}^d}$ be a collection of independent simple symmetric random walks on $\mathbb{Z}^d$ with jump rate 1. Here, $N_y$ is the number of walks starting from $y$ at time $0$, distributed independently as Poisson random variable with mean $\nu_y$, and $Y_j^y := (Y_j^y (t))_{t \geq 0}$ denotes the $j$-th walk that starts at $y$ at time $0$. The collection of walks $(Y_j^y)_{1 \leq j \leq N_y, y \in \mathbb{Z}^d}$ will act as mobile traps among which the random walk $X$ evolves. Denote the number of traps present at site $x$ at time $t$ by
\begin{equation}
  \xi (t, x) := \sum_{y \in {\mathbb{Z}^d}, 1 \leq j \leq N_y}
  \delta_x  (Y_j^y (t)). \label{randomfield}
\end{equation}
At each time $t\ge 0$, the walk $X$ is killed at rate $\gamma\xi(t,X(t))$.

One of the main quantity of interest in trapping problems is the probability that the particle survives until a given time. Conditional on the realization of the trap field $\xi$, the probability that the particle survives up to time $t$ is given by
$$Z_{t,\xi}^\gamma := \E_0^X\left[\exp\left\{-\gamma\int_0^t \xi(s,X(s))\,\dd s\right\}\right],
$$
where $\E^X_0$ denotes the expectation with respect to the random walk $X$ starting from the origin. The quantity $Z_{t,\xi}^\gamma$ is referred to as the \emph{quenched survival probability}. Averaging further over the randomness of the trap field $\xi$ yields the \emph{annealed survival probability},
\begin{equation}
    Z_t^\gamma :=\E^\xi[Z_{t,\xi}^\gamma] = \E^\xi\E_0^X\left[\exp\left\{-\gamma\int_0^t \xi(s,X(s))\,\dd s\right\}\right],
\label{eqn:annealedSP}
\end{equation}
where $\E^\xi$ denotes the expectation with respect to the trap field $\xi$. We begin by introducing some notation and assumptions used in the theorem statements, followed by the result on the asymptotics of the annealed survival probability.
It can be verified that, for every fixed $t\ge0$ and $x\in \Z^d$, the number of traps $\xi(t,x)$ is Poisson distributed with mean
\begin{equation}
m(t,x) := \E[\xi(t,x)]= \sum_{y\in\mathbb{Z}^d} \nu_y\,\pr_y^Y(Y(t)=x),
\label{eqn:poissonmean}
\end{equation}
where $\pr_y^Y$ denotes the law of a continuous-time simple symmetric random walk $Y$ with jump rate $1$, starting from site $y$ at time $0$. We impose the following assumptions on the trap field $\xi$.

\noindent \textbf{Assumptions:}
\begin{enumerate}[label=(A\arabic*)]
    \item \label{list:A1}The initial Poisson means $(\nu_y)_{y\in \mathbb{Z}^d}$ are bounded from above, i.e.,
$$
0< \nu^* := \sup_{y\in \mathbb{Z}^d} \nu_y < \infty.
$$
\item \label{list:A2} For all $x\in \Z^d$ and $t\ge 0$,
  \begin{equation}
      m(t,x)\ge m(t,0),
      \label{eqn:pascalcond} 
  \end{equation}
\end{enumerate}

Our first result establishes the precise asymptotics for the annealed survival probability in dimensions $d=1,2$ under assumptions \ref{list:A1} and \ref{list:A2}.
\begin{theorem}\label{thm:annealed}
    Assume $\gamma \in (0,\infty],\kappa \ge 0$, {\rm \ref{list:A1}} and {\rm \ref{list:A2}}. Then the following holds:
    \begin{equation}
        Z^{\gamma}_t = \begin{cases}
            \exp\left\{- \, \bar{m}\sqrt{\frac{8}{\pi}}t^{\frac{1}{2}}(1+o(1))\right\},& \text{if }d=1,\\[2mm]
        \exp\left\{- {\pi\,\bar{m}\frac{t}{\log t}(1+o(1))}\right\}, & \text{if } d=2.
        \end{cases}
        \label{eqn:annealed result}
    \end{equation}
    where $
     0< \bar{m}:=\lim\limits_{t\to \infty} \frac{1}{t}\int_0^t m(s,0)\, \dd s <\infty$.
\end{theorem}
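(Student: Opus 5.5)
Following \cite{DGRS2012}, the plan is to integrate out the Poisson traps first and reduce everything to a Pascal-principle comparison with a stationary walker. Conditional on the path of $X$, independence of the Poisson clouds and the thinning property give
\begin{equation*}
Z^\gamma_t=\E^X_0\Big[\exp\Big\{-\sum_{y\in\Z^d}\nu_y\, v^X(t,y)\Big\}\Big],\qquad v^X(t,y):=1-\E^Y_y\Big[\exp\Big\{-\gamma\int_0^t\delta_{X(s)}(Y(s))\,\dd s\Big\}\Big].
\end{equation*}
The lower bound comes from the strategy ``$X$ stays at the origin''. The upper bound comes from a Pascal principle adapted to the inhomogeneous intensity, which is where \ref{list:A2} is used. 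In both bounds the problem reduces to the asymptotics of $\sum_y \nu_y v^0(t,y)$, where $v^0$ is the quantity for $X\equiv 0$ and equals the probability that a trap started at $y$ kills a walker sitting at $0$ by time $t$.

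\textbf{Lower bound.} Restrict to the event that $X$ makes no jump in $[0,t]$, which has probability $e^{-\kappa t}$. This is not negligible against $t^{1/2}$ in $d=1$ or $t/\log t$ in $d=2$, so instead I would confine $X$ to a ball of radius $R$ (or a slowly growing radius). This costs probability $e^{-c t/R^2}$, and the killing term only changes by a factor $1+o(1)$ as $R\to\infty$, since the recurrent trap walk hitting a finite ball is asymptotically equivalent to hitting a point. This gives $Z^\gamma_t\ge \exp\{-(1+o(1))\sum_y\nu_y v^0(t,y)\}$ after letting $R\to\infty$ slowly.

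\textbf{Upper bound (Pascal principle).} I would show that for every deterministic path $X$, $\sum_y\nu_y v^X(t,y)\ge \sum_y \nu_y v^0(t,y)(1+o(1))$. In the homogeneous case this is the Moreau--Oshanin--B\'enichou--Coppey Pascal principle, proved via a discretised path and the symmetric, unimodal transition kernel. In the inhomogeneous case I would write $v$ through the Feynman--Kac/Duhamel expansion in time slices. Summing over $y$ against $\nu_y$ turns $\sum_y\nu_y p_s(y,x)$ into $m(s,x)$, and \ref{list:A2} says $m(s,\cdot)$ is minimised at $0$. The idea is to prove by induction over time slices, in the style of the rearrangement argument in \cite{DGRS2012}, that moving the walker's position at each step to the current minimiser of the mean field can only decrease the killing functional. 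This step is the \textbf{main obstacle}. The homogeneous proof uses translation invariance heavily, and here only the one-time marginal inequality $m(t,x)\ge m(t,0)$ is available. My first attempt would be to expand $\log Z$ in cumulants: the first-order term $\gamma\int_0^t m(s,X(s))\dd s\ge\gamma\int_0^t m(s,0)\dd s$ follows directly from \ref{list:A2}. For the higher-order terms, which involve only the heat kernel along the path and not $\nu$, I would reuse the homogeneous Pascal inequality.

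\textbf{Asymptotics of the stationary problem.} With $X\equiv0$, decompose $v^0(t,y)$ by the first visit time of the trap to $0$. This gives $\sum_y\nu_y v^0(t,y)\approx \int_0^t m(s,0)\,\cdot$ (rate of a new arrival at $0$) times a renewal factor. In $d=1$ the number of distinct traps visiting $0$ by time $t$ has mean $\sum_y\nu_y\pr_y(\tau_0\le t)$, and by a Tauberian/convolution argument with $\int_0^t m(s,0)\dd s\sim\bar m t$ this is $\sim\bar m\sqrt{8t/\pi}$ for every $\gamma\in(0,\infty]$; finite $\gamma$ only costs $O(1)$ per trap-excursion count, which is of lower order. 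In $d=2$ the same argument uses $\pr_0(\tau_0>t)\sim\pi/\log t$, giving $\pi\bar m t/\log t$. The existence and positivity of $\bar m$ are assumed in the statement, and finiteness follows from \ref{list:A1}. Combining the three steps yields \eqref{eqn:annealed result}.
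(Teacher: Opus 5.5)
The gap is the upper bound, i.e.\ the Pascal principle, and the route you sketch for it fails. You split $\Phi^{\gamma,X}(t)=\sum_{n\ge1}(-1)^{n+1}\gamma^n\phi^X_n(t)$ into the first-order term, handled by (A2), and a remainder, to be handled by the homogeneous Pascal inequality. This does not work, for two reasons.

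First, the higher cumulants are not $\nu$-free. By \eqref{eqn:Xpsintexpansion}, each $\phi^X_n(t)$ carries the factor $m(s_1,X(s_1))$, and resumming gives
\[
\sum_{n\ge2}(-1)^{n+1}\gamma^n\phi^X_n(t)=-\gamma\int_0^t m(s,X(s))\,\bigl(1-w^X(s,t)\bigr)\,\dd s\le 0 .
\]
Here $w^X(s,t)$ is the probability that a single trap placed at $X(s)$ at time $s$ does not kill the walker during $[s,t]$. The larger weight $m(s,X(s))\ge m(s,0)$ from (A2) therefore pushes the remainder down, against the first-order gain. So the two pieces cannot be compared separately.

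Second, even for constant $\nu$ there is no termwise comparison. The bound $p_u(x,y)\le p_u(0,0)$ gives $\phi^X_n\le\phi^0_n$ for all $n\ge2$, which helps the even-order terms and hurts the odd-order ones. The homogeneous Pascal principle is a statement about the whole sum, not about a tail weighted by $m$ along the path. Your other suggestion, an induction that moves the walker to the current minimiser of $m(s,\cdot)$, meets the obstacle you name yourself. As written, the upper bound is unproved.

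The missing idea is to resum into the nonnegative density $\tilde\Phi^{\gamma,X}(t)=\gamma\sum_y\nu_y\E^Y_y\bigl[\1_{\{Y(t)=X(t)\}}e^{-\gamma\int_0^t\delta_0(Y(s)-X(s))\,\dd s}\bigr]$, which satisfies the renewal identity \eqref{eqn:Phitilderecursion}. In that identity (A2) enlarges the source $\gamma m(t,X(t))$. At the same time, $p_{t-s}(X(s),X(t))\le p_{t-s}(0,0)$ together with $\tilde\Phi^{\gamma,X}\ge0$ shrinks the subtracted term. Both effects point the same way, so for $f=\tilde\Phi^{\gamma,X}-\tilde\Phi^{\gamma,0}$,
\[
g(t):=f(t)+\gamma\int_0^t p_{t-s}(0,0)\,f(s)\,\dd s\;\ge\;\gamma\bigl(m(t,X(t))-m(t,0)\bigr)\;\ge\;0 .
\]
The paper turns this into $\Phi^{\gamma,X}\ge\Phi^{\gamma,0}$, exactly and for every path, by integration by parts and a minimum principle. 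Alternatively, $\Phi^{\gamma,X}-\Phi^{\gamma,0}=k*g$ with $\hat k(\lambda)=1/(\lambda(1+\gamma\hat p(\lambda)))$. Here $k\ge0$, because it equals $\gamma^{-1}\tilde\Phi^{\gamma,0}$ computed with $\nu\equiv1$. The identity \eqref{eqn:Phiforgammainfinity} handles $\gamma=\infty$ in the same way.

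There are also two smaller points. First, $\bar m$ is not assumed. Its existence and positivity follow from (A2), since $m(s+u,0)=\sum_x p_u(x,0)m(s,x)\ge m(s,0)$ makes $m(\cdot,0)$ nondecreasing and bounded (Lemma~\ref{lm:alpha1}). You need this before any Tauberian step.

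Second, in the lower bound a fixed radius costs $e^{-ct/R^2}$. So $d=1$ needs $t^{1/4}\ll R_t\ll t^{1/2}$, which is not ``slowly growing''. In $d=2$ you need $R_t^2\gg\log t$ and $\log R_t=o(\log t)$, e.g.\ $R_t=\log t$. The reason is that $\inf_{z\in\partial B_{R_t}}\pr^Y_z(\tau_0\le\epsilon t)\approx 1-\log R_t/\log\sqrt t$ must tend to $1$, and a polynomial radius would lose a constant factor. With these fixes, your lower bound and your stationary-walker asymptotics match Propositions~\ref{prop:LB} and~\ref{lm:kappa0}.
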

Note that the leading-order asymptotics of $Z^\gamma_t$ in \eqref{eqn:annealed result} are independent of the killing rate $\gamma$. Further, when $\nu_y \equiv \nu>0$, we recover the corresponding annealed asymptotics of \cite[Theorem~1.1]{DGRS2012} for the homogeneous setting. We discuss several examples of trap environments in Section~\ref{subsec:examples} and discuss the proof strategy in Section~\ref{subsec:discuss}. 
We next analyse two occupational time functionals of the trap environment at the origin. Before we state our next theorems, we introduce an assumption which is weaker than \ref{list:A2}, namely
 
\textbf{Assumption:}
 \begin{enumerate}[label=(A\arabic*), start=3]
     \item \label{list:A3} There exist $\alpha=\alpha(d)$ with $\alpha(1)\in (1/2,1]$ and $\alpha(d) \in (0,1]$ in $d\ge 2$, a slowly varying function at infinity $L:(0,\infty)\to (0,\infty)$, and a constant $\bar m \in (0,\infty)$ such that
$$
\lim_{t\to\infty}\frac{1}{t^\alpha L(t)}\int_0^t m(s,0)\,\mathrm{d}s=\bar m.
$$
 \end{enumerate}
 In Lemma~\ref{lm:alpha1} we will show that \ref{list:A1} and \ref{list:A2} imply \ref{list:A3} with $\alpha =1$ and $L(\cdot) \equiv 1$.  We also need to setup the following notation. Let
\begin{equation}
    a_t = \begin{cases}
        \,t^{\alpha - \frac{1}{2}}L(t) & \text{ if } d=1,\\
        \,\frac{t^\alpha L(t)}{\log t} & \text{ if } d=2,\\
        \, t^{\alpha}L(t) &\text{ if } d\ge 3,
    \end{cases}
    \label{eqn:defat}
\end{equation}
$G_d(0) = \int_0^\infty p_t(0)\,\dd t$, where $p_t(\cdot)$ is the transition kernel of a rate 1 simple symmetric random walk on $\Z^d$ and $Z$ be a standard normal random variable.
 
 Our second result is on the number of visits to the origin by the traps until time $t$, given by.
\begin{equation}
    D_t(\xi) = \int_0^t \xi(s,0)\,\dd s
    \label{eqn:Dtdefn}
\end{equation}
Before stating the result, we introduce the rate function governing the large deviations for $D_t$, $I_D: (\beta_-, \infty) \to [0, \infty)$, given by
\begin{equation}
I_D(\beta) = \begin{cases}
    \beta \lambda_\beta - \Psi(\lambda_{\beta}) & \text{if } d=1,\\
    \pi\bar{m}(\sqrt{\beta}-1)^2 & \text{if } d=2,\\
    \frac{\bar{m}}{G_d(0)}(\sqrt{\beta}-1)^2 & \text{if } d \ge 3,
\end{cases}
\label{eqn:rateID}
\end{equation}
where $\beta_- = 0$ for $d =1$ and $d\ge 3$, $\beta_- = 1/4$ for $d=2$, and $\lambda_\beta$ is the unique solution to $\Psi'(\lambda_\beta) = \beta$ for the function
\begin{equation*}
\Psi(\lambda) = e^{\frac{\lambda^2}{2\bar{m}^2}}\lambda^{1-2\alpha}\int_0^\lambda e^{-\frac{\eta^2}{2\bar{m}^2}}\eta^{2\alpha-1}\left[2\alpha + \frac{\sqrt{2}\left(\alpha + \frac{1}{2}\right)\Gamma(\alpha+1)}{\bar{m}\Gamma\left(\alpha + \frac{3}{2}\right)}\eta\right]\mathrm{d}\eta, \quad \Psi(0)=0.
\end{equation*}
The following theorem establishes a law of large numbers, central limit theorem, and large deviation principle for $D_t$.\begin{theorem}\label{thm:Dt}
    Under Assumptions {\rm\ref{list:A1}} and {\rm \ref{list:A3}}, the following hold:
    \begin{enumerate}
        \item[(a)]  As $t\to \infty$, $$\frac{D_t}{\E^\xi[D_t]} \to 1 \quad \text{almost surely.}$$
        \item[(b)]  As $t\to \infty$, $$\frac{\sqrt{a_t}}{\sigma_D}\left(\frac{D_t}{\E^\xi[D_t]}-1\right)\xrightarrow{d}Z,$$ where 
        $$\sigma_D^2 = \begin{cases}
            \frac{\sqrt{2}\Gamma(\alpha +1)}{\bar{m}\Gamma(\alpha + \frac{3}{2})}& \text{ if } d=1,\\
            \frac{2}{\pi \bar{m}} & \text{ if } d=2,\\
            \frac{2G_d(0)}{\bar{m}} & \text{ if } d\ge 3.
        \end{cases}$$
        \item[(c)]  For any $\beta > 1$,
        $$
        \lim_{t \to \infty}\frac{1}{a_t}\log\mathbb{P}\left(\frac{D_t}{\mathbb{E}^\xi[D_t]}> \beta\right) = -I_D(\beta) \in (-\infty, 0),
        $$
        and for any $\beta \in (\beta_-, 1)$,
        $$
        \lim_{t \to \infty}\frac{1}{a_t}\log \mathbb{P}\left(\frac{D_t}{\mathbb{E}^\xi[D_t]}< \beta\right) = -I_D(\beta) \in (-\infty, 0).
        $$
    \end{enumerate}
\end{theorem}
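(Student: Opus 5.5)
The whole argument runs through the exact cumulant generating function of $D_t$, which is available because the traps are independent walks started from a Poisson field. Each trap starting at $y$ contributes independently, so by the Poisson (Campbell) formula, for every $\lambda\in\R$,
\begin{equation*}
\log \E^\xi\left[e^{\lambda D_t}\right] = \sum_{y\in\Z^d}\nu_y\left(\E_y^Y\left[e^{\lambda \ell_t(Y)}\right]-1\right), \qquad \ell_t(Y):=\int_0^t \delta_0(Y(s))\,\dd s .
\end{equation*}
By time reversal, $\sum_y \nu_y \pr_y^Y(Y(s)=0)=m(s,0)$. Expanding $e^{\lambda\ell_t}-1$ therefore gives $\E^\xi[D_t]=\int_0^t m(s,0)\,\dd s\sim \bar m\, t^\alpha L(t)$ by \ref{list:A3}. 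For higher cumulants, the $k$-th cumulant is $\kappa_k = \sum_y\nu_y\E_y^Y[\ell_t^k]$. Writing $\E_y[\ell_t^k]=k!\int_{0<s_1<\dots<s_k<t}p_{s_1}(y)\prod_{i\ge 2}p_{s_i-s_{i-1}}(0)\,\dd s$, we get
\begin{equation*}
\kappa_k = k!\int_{0<s_1<\dots<s_k<t} m(s_1,0)\prod_{i=2}^k p_{s_i-s_{i-1}}(0)\,\dd s .
\end{equation*}
This is a convolution of $m(\cdot,0)$ with the return-time kernel. The plan is to evaluate it asymptotically with Karamata/Tauberian arguments, using $p_s(0)\sim (4\pi s/d)^{-d/2}$ (rate-1 walk, so $p_s(0)\sim (2\pi s)^{-1/2}$ in $d=1$ and $(\pi s)^{-1}$ in $d=2$). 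In $d\ge 3$, $\int_0^\infty p=G_d(0)<\infty$, so $\kappa_2\sim 2G_d(0)\,\E D_t$. In $d=2$ the inner integrals give $\log t/\pi$ per factor, so $\kappa_2\sim \tfrac{2}{\pi}\log t\,\E D_t$. In $d=1$ one uses the Laplace transform of a regularly varying integrand of index $\alpha-1$ against $s^{-1/2}$. The normalisation in every case is $\mathrm{Var}(D_t)/(\E D_t)^2\sim \sigma_D^2/a_t$, and this is the computation that identifies $\sigma_D^2$.

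For (b), I use the scaled cumulant function $\Lambda_t(\lambda):=\log\E\exp\{\lambda a_t D_t/\E D_t\}$. The claim to establish is that, for $\lambda$ in a suitable range, $\frac{1}{a_t}\Lambda_t(\lambda)\to\Lambda(\lambda)$. In $d\ge 3$ the limit is $\bar m\,\lambda/(\bar m- G_d(0)\lambda)$: this follows from the geometric structure of $\ell_t$ under $\pr_y$ (hit $0$, then i.i.d. exponential holding blocks), giving $\E_y[e^{\theta\ell_t}]-1\approx \pr_y(\text{hit})\,\theta G/(1-\theta G)$. The analogous formula holds in $d=2$ with $G$ replaced by $\log t/\pi$. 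In $d=1$ the limit is $\Psi$, obtained via the Brownian local time scaling. For the CLT, the third-cumulant bound $\kappa_3/\kappa_2^{3/2}\to0$ (equivalently the quadratic expansion of $\Lambda_t$ at $\lambda/\sqrt{a_t}$) gives Gaussian convergence by the method of cumulants. For (a), I take a Chebyshev/fourth-moment bound along a geometric sequence $t_n=\rho^n$; this is summable since $a_t$ grows polynomially, or like $t/\log t$ (note $\alpha>1/2$ in $d=1$ ensures $a_t\to\infty$). Monotonicity of $D_t$ in $t$ together with regular variation of $\E D_t$ then fills in the gaps as $\rho\downarrow1$.

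For (c), I apply the Gärtner–Ellis theorem with speed $a_t$ to $D_t/\E D_t$, using the limit $\Lambda$ from above. The rate function is its Legendre transform. In $d\ge3$ this evaluates to $\frac{\bar m}{G}(\sqrt\beta-1)^2$, and similarly $\pi\bar m(\sqrt\beta-1)^2$ in $d=2$. Since $\Lambda$ blows up at $\lambda=\bar m/G$, it is steep, so Gärtner–Ellis applies for all $\beta>0$. The catch in $d=2$ is that the lower tail only matches for $\beta>1/4$, where the optimiser $\lambda$ stays inside the region on which the slowly varying $\log t$ corrections are controlled uniformly; I would restrict to that range. Strict positivity and finiteness of $I_D$ off $\beta=1$ follow from strict convexity.

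The main obstacle is the $d=1$ limit $\Lambda=\Psi$ under merely regularly varying $m(\cdot,0)$. There the sum over $y$ does not decouple into a hitting probability times a universal factor: the contribution depends on the hitting time $\tau_0$, through $\int_0^t m(s,0)\,\dd s$ convolved with local-time moments. My first approach is to write $\E_y[e^{\theta\ell_t}]-1=\E_y\bigl[\1_{\tau_0<t}\,(u_\theta(t-\tau_0)-1)\bigr]$ with $u_\theta(r)=\E_0 e^{\theta\ell_r}$. Summing over $y$ against $\nu_y$ converts this into $\int_0^t h(s)\,(u_\theta(t-s)-1)\,\dd s$, where $h(s)\,\dd s=\sum_y\nu_y\pr_y(\tau_0\in \dd s)$. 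Here $\int_0^t h\approx$ a Laplace-transform relative of $m(\cdot,0)$ by the renewal identity $m=h*p$. Then I would take Brownian scaling $u_{\theta}(r)\to \E e^{c\lambda L_1 \sqrt r/\dots}$ with $L_1\sim|N|$. This produces the Gaussian factors $e^{\pm\eta^2/2\bar m^2}$ and the weight $\eta^{2\alpha-1}$ in $\Psi$. Uniform integrability for the exponential moments would be handled by dominating $\ell_t$ with a half-normal tail.
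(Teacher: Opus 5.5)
Your outline is correct, but it reaches the central input, the limit of the scaled cumulant generating function, by a different route from the paper.

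The paper expands $\Psi_t(\lambda)=\frac{1}{a_t}\sum_n\phi_n(t)(\lambda b_t/\bar m)^n$ and works only with the integral equation $\Phi_t(\lambda)=\lambda\int_0^t m(s,0)\,\dd s+\lambda\int_0^t p_{t-s}(0,0)\,\Phi_s(\lambda)\,\dd s$. In $d=1$ it finds the asymptotics of each $\phi_n$ by Laplace transforms and Tauberian theorems, dominates them via $\phi_n(t)\le K^{n-1}\phi_1(t)\,t^{(n-1)/2}/\Gamma(\frac{n+1}{2})$, and sums by dominated convergence. In $d=2$ it sandwiches $\Phi_t$ between a monotonicity upper bound and an iterated lower bound. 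In $d\ge3$ it uses the Laplace transform directly. Parts (a)--(c) then all follow from the Cox--Griffeath lemma, with the CLT resting on convexity of $\Psi_t'$ and $\Psi_t''(0)\to\sigma_D^2$. This route needs no limit theorems for local times, only Tauberian arguments.

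Your first-hitting decomposition $\int_{[0,t]}(u_\theta(t-s)-1)\,H(\dd s)$, with $m(\cdot,0)=H*p$, instead reduces every dimension to the scaling limit of one walk's local time at the origin (half-normal, exponential, or a geometric sum of exponentials). This explains the three formulas uniformly, and carried through in $d=1$ it yields exactly the paper's coefficients $\Gamma(\alpha+1)/(\Gamma(\frac{n+1}{2}+\alpha)(\sqrt2\,\bar m)^{n-1})$.

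The cost is that you must supply:
\begin{itemize}
\item exponential uniform integrability of $\ell_r/\sqrt r$, via a Gaussian moment bound of the same type as the paper's induction;
\item exponential uniform integrability of $\ell_r/\log r$, where the Kac bound $\E_0^Y[\ell_r^n]\le n!\,G_r(0,0)^n$ suffices for $0\le\lambda<\pi\bar m$;
\item an identification of your limit with the closed form of $\Psi$.
\end{itemize}

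The gain is that for $\lambda<0$ bounded convergence suffices (given the exponential limit law of $\pi\ell_r/\log r$), so in $d=2$ you get the limit for all $\lambda<\pi\bar m$. The restriction $\beta>1/4$ is therefore an artifact of the paper's lower bound for negative $\lambda$, which is proved only for $\lambda>-\pi\bar m$, where $\Psi'(-\pi\bar m)=1/4$. It is not caused by uniformity of the $\log t$ corrections, as you suggest.

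Likewise, your LLN argument gives almost-sure convergence along all real $t$: Chebyshev and Borel--Cantelli along $t_n=\rho^n$, then monotonicity of $D_t$ and regular variation of $\E^\xi[D_t]$. The cited lemma only gives convergence along integers.

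Two corrections. First, a third-cumulant bound does not by itself give normality by the method of cumulants. It does work here, because $D_t$ is compound Poisson with L\'evy measure $\sum_y\nu_y\pr_y^Y(\ell_t\in\cdot\,)$ on $(0,\infty)$. Hence $\kappa_3/\kappa_2^{3/2}\to0$ (it is of order $a_t^{-1/2}$) is a Lyapunov condition for the characteristic function, and you should state this. Second, the local limit constant is $(2\pi s/d)^{-d/2}$, not $(4\pi s/d)^{-d/2}$; your explicit values in $d=1,2$ are the correct ones.
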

Our third result is on the number of distinct particles which visit the origin up to time $t$, given by
\begin{equation}
    N_t(\xi) = \sum_{y\in \Z^d}\sum_{1\le j \le  N_y} \1_{\{Y^j_y(s)=0 \text{ for some }s\le t\}}.
\end{equation}
Before stating the result, we introduce the rate function for $N_t$,  $I_N: (0, \infty) \to [0, \infty)$, given by
\begin{equation}
I_N(\beta) = \theta\bar{m}(\beta\log \beta - \beta + 1),
\label{eqn:rateIN}
\end{equation}
where the constant $\theta$ is defined as
\begin{equation}
\theta = \begin{cases}
    \frac{\sqrt{2}\Gamma(\alpha+1)}{\Gamma(\alpha +\frac{1}{2})} & \text{if } d=1,\\
    \pi & \text{if } d=2,\\
    \frac{1}{G_d(0)} & \text{if } d \ge 3.
\end{cases}
\label{eqn:theta}
\end{equation}
 The following theorem establishes a law of large numbers, a central limit theorem, and a large deviation principle for $N_t$.

\begin{theorem}\label{thm:Nt}
       Under Assumptions {\rm\ref{list:A1}} and {\rm\ref{list:A3}}, the following hold:
        \begin{enumerate}[label=(\alph*)]
         \item  As $t \to \infty$, 
        $$\frac{N_t}{\mathbb{E}^\xi[N_t]} \to 1 \quad \text{almost surely.}$$
         \item  As $t \to \infty$,
        $$\sqrt{\theta\bar{m}a_t}\left( \frac{N_t}{\mathbb{E}^\xi[N_t]}-1\right) \xrightarrow{\mathrm{d}}Z.$$
        \item  For any $\beta > 1$,
        $$\lim_{t \to \infty}\frac{1}{a_t}\log\mathbb{P}\left( \frac{N_t}{\mathbb{E}^\xi[N_t]}> \beta\right) = -I_N(\beta) \in (-\infty, 0),$$
        and for any $\beta \in (0, 1)$,
        $$\lim_{t \to \infty}\frac{1}{a_t}\log \mathbb{P}\left( \frac{N_t}{\mathbb{E}^\xi[N_t]}<\beta\right) = -I_N(\beta) \in (-\infty, 0).$$
    \end{enumerate}
\end{theorem}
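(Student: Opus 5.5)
The starting point is that $N_t$ is exactly Poisson. Each trap starting at $y$ moves independently, and by Poisson thinning the traps from $y$ that hit the origin by time $t$ form a Poisson$(\nu_y \pr_y^Y(\tau_0\le t))$ variable, where $\tau_0$ is the hitting time of $0$. These variables are independent over $y$, so
$$N_t\sim \mathrm{Poisson}(\mu_t),\qquad \mu_t:=\E^\xi[N_t]=\sum_{y\in\Z^d}\nu_y\,\pr_y^Y(\tau_0\le t).$$
Parts (b) and (c) then reduce to Poisson tail estimates once we show
$$\mu_t\sim \theta\bar m\, a_t .$$
The CLT is the normal approximation of a Poisson variable with mean $\mu_t\to\infty$. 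For the LDP, Cram\'er/Chernoff bounds for the Poisson law give
$$\log \pr\bigl(N_t>\beta\mu_t\bigr)=-\mu_t(\beta\log\beta-\beta+1)(1+o(1)),$$
and the same for the lower tail with $\beta\in(0,1)$. Combined with $\mu_t\sim\theta\bar m a_t$ this gives exactly $I_N$.

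\textbf{Asymptotics of $\mu_t$.} We do not compute $\mu_t$ directly; we relate it to $\int_0^t m(s,0)\,\dd s$, which Assumption \ref{list:A3} controls. By the strong Markov property at $\tau_0$ and symmetry,
$$\int_0^t m(s,0)\,\dd s=\sum_y\nu_y\int_0^t \pr_y(Y(s)=0)\,\dd s=\int_0^t \mu'_u\, g(t-u)\,\dd u,\qquad g(r):=\int_0^r p_s(0)\,\dd s .$$
Here $\mu'_u\,\dd u=\sum_y\nu_y\pr_y(\tau_0\in \dd u)$, so $\mu_t=\int_0^t\mu'_u\,\dd u$, and the $y=0$ atom at $u=0$ is included. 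This is a renewal-type convolution identity, $M=\mu * g'$ where $M(t):=\int_0^t m(s,0)\,\dd s$. We have $g(r)\sim\sqrt{2r/\pi}\cdot(\text{const})$ in $d=1$, $g(r)\sim\frac{1}{2\pi}\log r$ in $d=2$, and $g(r)\to G_d(0)$ in $d\ge3$. Take Laplace transforms:
$$\hat M(\lambda)=\hat\mu(\lambda)\,\lambda\,\hat g(\lambda).$$
By \ref{list:A3} and Karamata's Tauberian theorem, $\hat M(\lambda)\sim \bar m\Gamma(\alpha+1)\lambda^{-\alpha}L(1/\lambda)$. The transform $\lambda\hat g(\lambda)$ is regularly varying with known constant: $\lambda^{-1/2}$ in $d=1$, $\log(1/\lambda)/(2\pi)$ in $d=2$, and constant in $d\ge 3$. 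Hence $\hat\mu(\lambda)$ is regularly varying. Since $\mu_t$ is nondecreasing in $t$, the Tauberian theorem applies in reverse and gives $\mu_t\sim \theta\bar m a_t$. The Gamma-function ratio produced in $d=1$ should be $\sqrt2\Gamma(\alpha+1)/\Gamma(\alpha+\tfrac12)$, and the condition $\alpha(1)>1/2$ ensures $a_t\to\infty$. This Tauberian step, with the constant bookkeeping and the slowly varying log in $d=2$, is the main technical point. The $d=2$ case needs the de Haan-type form of Karamata for products with $\log$, which is still standard since $\log$ is slowly varying.

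\textbf{Almost sure convergence, part (a).} Poisson concentration gives
$$\pr\bigl(|N_t-\mu_t|>\varepsilon\mu_t\bigr)\le 2e^{-c_\varepsilon\mu_t},$$
and $\mu_t\ge c\,t^{\delta}$ for some $\delta>0$ by regular variation; in $d=1$ this uses $\alpha>1/2$. Along the geometric sequence $t_n=(1+\eta)^n$ these probabilities are summable, so Borel--Cantelli gives convergence along $t_n$. Since $N_t$ is nondecreasing in $t$ and $\mu_{t_{n+1}}/\mu_{t_n}\to(1+\eta)^{\alpha}$, sandwiching $N_t$ between $N_{t_n}$ and $N_{t_{n+1}}$ for $t\in[t_n,t_{n+1}]$ and letting $\eta\downarrow0$ yields $N_t/\mu_t\to1$ almost surely. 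Assumption \ref{list:A1} is used only to ensure $\mu_t<\infty$ and to justify the interchanges of sums and integrals.
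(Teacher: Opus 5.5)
Your proof is correct. Its analytic core is the same as the paper's; only the probabilistic wrapper differs.

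Your identity $M(t)=\int_{[0,t]}g(t-u)\,\dd\mu_u$, with the atom $\nu_0$ at $u=0$, is the integrated form of the strong Markov identity \eqref{eqn:Phiforgammainfinity} at $X\equiv 0$. Take Laplace--Stieltjes transforms; this is the convention under which your $\hat M(\lambda)\sim\bar m\Gamma(\alpha+1)\lambda^{-\alpha}L(1/\lambda)$ is right. You get $\int_{[0,\infty)}e^{-\lambda u}\,\dd\mu_u=\hat m(\lambda)/\hat p(\lambda)$. This is exactly the paper's $\nu_0+\hat{\tilde\Phi}^{\infty,0}(\lambda)=\hat m(\lambda)/\hat p(\lambda)$ from the $\gamma=\infty$ case of Proposition~\ref{lm:kappa0}, which the paper simply quotes to get $\E^\xi[N_t]\sim\theta\bar m a_t$.

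The paper then writes the scaled cumulant generating function $\Psi_t(\lambda)=a_t^{-1}(e^{\lambda/(\theta\bar m)}-1)\E^\xi[N_t]$, i.e.\ the Poisson one. It passes to the limit $\theta\bar m(e^{\lambda/(\theta\bar m)}-1)$ and invokes the general Lemma~\ref{lm:largedeviations}, so that $D_t$ and $N_t$ go through the same machinery. You instead use the exact Poisson law directly: the Poisson CLT with Slutsky, Chernoff/Stirling tails, and Borel--Cantelli on geometric times plus monotonicity of $t\mapsto N_t$. This is more elementary and slightly sharper in two ways:
\begin{itemize}
\item You get almost sure convergence along all real $t\to\infty$. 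Lemma~\ref{lm:largedeviations}(a) only yields it along $t=1,2,\dots$, and the paper does not interpolate.
\item Your tail bounds concern the event $\{N_t>\beta\,\E^\xi[N_t]\}$ itself rather than $\{N_t>\beta\theta\bar m a_t\}$, so no continuity argument in $\beta$ is needed.
\end{itemize}

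Do fix the constants before writing this up. For the rate-one walk, $p_t(0,0)\sim(2\pi t)^{-1/2}$ in $d=1$ and $p_t(0,0)\sim(\pi t)^{-1}$ in $d=2$. Hence $g(r)\sim\frac1\pi\log r$ in $d=2$, not $\frac1{2\pi}\log r$. Likewise $\lambda\hat g(\lambda)=\hat p(\lambda)\sim(2\lambda)^{-1/2}$ in $d=1$ and $\sim\frac1\pi\log(1/\lambda)$ in $d=2$.

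With the values you wrote, Karamata would return $\theta=2\pi$ in $d=2$ and lose the factor $\sqrt2$ in $d=1$. With the correct values it gives exactly $\theta=\sqrt2\,\Gamma(\alpha+1)/\Gamma(\alpha+\frac12)$ in $d=1$ and $\theta=\pi$ in $d=2$.

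No de Haan theory is needed in $d=2$, because $L(x)/\log x$ is itself slowly varying. Finally, in $d=1$ the ratio $\mu_{t_{n+1}}/\mu_{t_n}$ tends to $(1+\eta)^{\alpha-1/2}$ rather than $(1+\eta)^{\alpha}$; this is harmless once $\eta\downarrow0$.
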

When $\nu_y \equiv \nu>0$, we recover the corresponding \cite[Theorem~1 and Theorem~2]{CG1984} for the homogeneous setting. 

\subsection{Examples}\label{subsec:examples}
We now present examples of the initial intensity profile $(\nu_y)_{y\in\mathbb{Z}^d}$ for which the assumptions of the main theorems hold as well as examples for which they do not. 
The first example considers a periodic environment, where the initial mean number of traps alternate between two values according to the parity of the lattice site. 
\begin{example}\label{eg:periodic}

  In this example, we first split the integer lattice into two sets $A$ and its complement $A^c$. Let $A$ be the set of all lattice sites whose coordinates sum to an odd number,\\ $$A = \left\{ x = (x_1, \ldots, x_d) \in \mathbb{Z}^d :\;
  \sum_{i = 1}^d x_i = 2 k + 1 \;\text{for}\;
  \text{some}\;k \in \mathbb{Z} \right\}.$$
  \begin{figure}[H]
        \centering
        \begin{subfigure}{.5\textwidth}
            \centering
            \includegraphics[width=8cm]{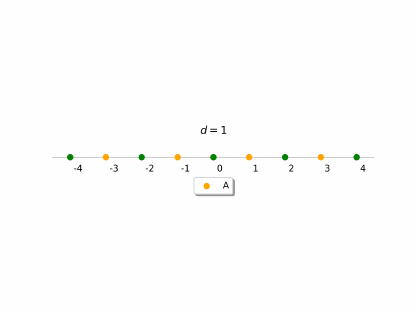}
            \caption{d = 1}
            \label{fig:sub1}
        \end{subfigure}%
        \begin{subfigure}{.5\textwidth}
            \centering
            \includegraphics[width=8cm]{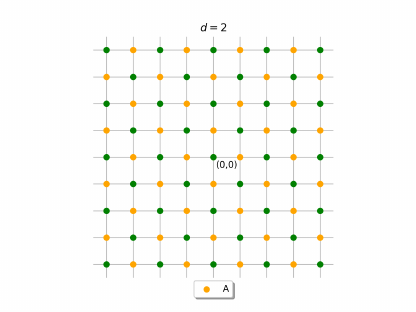}
            \caption{d = 2}
            \label{fig:sub2}
        \end{subfigure}
        \caption{Initial trap configuration of Example \ref{eg:periodic}}
        \label{fig:test}
    \end{figure}
    Let the initial trap
  configuration be such that the number of traps at $y \in \mathbb{Z}^d$ at
  time $0$ denoted by $N_y$ has the distribution $N_y \sim 
  \text{Poisson} (\nu_y)$ with
  \begin{equation}
    \nu_y = \left\{\begin{array}{l}
      \nu_1 \quad \text{if} \quad y \in A,\\
      \nu_2  \quad \text{otherwise} .
    \end{array}\right.
  \end{equation}
  where $\nu_1 \geq \nu_2$ and $\nu_1>0$. Clearly, \ref{list:A1} holds for this example and it is standard to verify that \ref{list:A2} also holds and $\bar{m} = \frac{\nu_1+\nu_2}{2}$ (see Lemma~\ref{clm:periodiccase} for complete proof).
  From Theorem~\ref{thm:annealed},
  \begin{equation}
        Z^{\gamma}_t = \begin{cases}
            \exp\left\{- \, \frac{\nu_1+\nu_2}{2}\sqrt{\frac{8}{\pi}}t^{\frac{1}{2}}(1+o(1))\right\},& \text{if } d=1,\\
        \exp\left\{- {\pi\,\frac{\nu_1+\nu_2}{2}\frac{t}{\log t}(1+o(1))}\right\}, & \text{if } d=2.
        \end{cases}
        \label{eqn:annealed periodic result}
    \end{equation}
    Therefore, asymptotically the annealed survival probability for this trap configuration is the same as that of a homogeneous system whose initial mean number of traps was equal to $\frac{\nu_1+\nu_2}{2}$. Moreover, Theorem~\ref{thm:Dt} and Theorem~\ref{thm:Nt} holds with $\bar{m} = \frac{\nu_1+\nu_2}{2}, \alpha = 1$ and $L(\cdot) \equiv 1$. The above example can be extended for any $\nu_y:\Z^d \rightarrow [0,\infty)$ which is periodic with period $N$ and satisfies \ref{list:A2}. In this case, $\bar{m}$ will be appropriate average of $\nu_y$ within the region of periodicity. 
 \end{example}

\begin{example}
\label{eg:polydecay}
Let $d=1$ and for $y\in \Z$ define
   $$\nu_y = \frac{|y|^\beta}{1+|y|^\beta}\quad \text{ for some }\beta>1.$$
\begin{figure}[H]
\centering
\begin{tikzpicture}
\begin{axis}[
    name=myaxis,
    axis lines=middle,
    every axis x line/.append style={<->},
    every axis y line/.append style={-},
    xlabel={$y$},
    ylabel={$\nu_y$},
    xmin=-10, xmax=10,
    ymin=0, ymax=1.2, 
    samples=200,
    domain=-10:10,
    width=10cm,
    height=6cm,
    xtick={-10,-8,-6,-4,-2,0,2,4,6,8,10},
    ytick={0,1},
]
\addplot[vrb, thick, dotted] 
  {(abs(x)^2)/(1+abs(x)^2)};

\addplot[dashed, black, domain=-10:10] {1}
    node[pos=0.9, above] {$\bar{m} = 1$}; 

\addplot[
    only marks,
    mark=*,
    mark size=2pt,
    vrdg
] coordinates {
    (-10,{100/101})
    (-9,{81/82})
    (-8,{64/65})
    (-7,{49/50})
    (-6,{36/37})
    (-5,{25/26})
    (-4,{16/17})
    (-3,{9/10})
    (-2,{4/5})
    (-1,{1/2})
    (0,0)
    (1,{1/2})
    (2,{4/5})
    (3,{9/10})
    (4,{16/17})
    (5,{25/26})
    (6,{36/37})
    (7,{49/50})
    (8,{64/65})
    (9,{81/82})
    (10,{100/101})
};

\coordinate (leftaxis) at (axis cs:-10,0);
\end{axis}
\draw[-stealth] (leftaxis) -- ++(-0.01cm,0);
\end{tikzpicture}
\caption{Initial trap configuration of Example \ref{eg:polydecay} with $\beta = 2$.}
        \label{fig:test2}
\end{figure}

Observe that $0\le\nu_y \le 1$ for all $y\in \Z$ and hence \ref{list:A1} holds. It is standard to verify that \ref{list:A2} also holds and $\bar{m} = 1$ (see Lemma~\ref{clm:polycase} for complete proof).
Applying Theorem~\ref{thm:annealed}, we obtain
    $$Z^\gamma_t = \exp\left\{-\sqrt{\frac{8}{\pi}}t^{1/2}(1+o(1))\right\}.$$
  Moreover, Theorem~\ref{thm:Dt} and Theorem~\ref{thm:Nt} holds with $\bar{m} = 1, \alpha = 1$ and $L(\cdot) \equiv 1$.  This example can be extended to $d=2$, by setting $\nu_y = \frac{\Vert y \Vert_1^\beta}{1+{\Vert y \Vert^\beta_1}} \text{ for some }\beta>1$, where $\Vert y \Vert_1 = |y_1|+|y_2|$ and $y= (y_1,y_2)\in \Z^2$.
  \end{example}

Next, we discuss a couple of examples where the Assumption~\ref{list:A2} is not satisfied. In both instances we show that annealed survival probability decays slower than the homogeneous setting by establishing a lower bound. 
\begin{example}
\label{ex:A3holds}
Let $d=1$ and for $y\in \Z$ define
   $$\nu_y = \frac{1}{1+|y|^\beta},\quad \text{ for some }0<\beta<1.$$
\begin{figure}[H]
\centering
\begin{tikzpicture}
\begin{axis}[
    name=myaxis,
    axis lines=middle,
    every axis x line/.append style={<->},
    every axis y line/.append style={-},
    xlabel={$y$},
    ylabel={$\nu_y$},
    xmin=-10, xmax=10,
    ymin=0, ymax=1.2,
    samples=200,
    domain=-10:10,
    width=10cm,
    height=6cm,
    xtick={-10,-8,-6,-4,-2,0,2,4,6,8,10},
    ytick={0,1},
]
\addplot[vrb, thick, dotted] 
    {1 / (1 + sqrt(abs(x)))};

\addplot[
    only marks,
    mark=*,
    mark size=2pt,
    vrdg
] coordinates {
    (-10,0.2403)
    (-9,0.2500)
    (-8,0.2612)
    (-7,0.2743)
    (-6,0.2899)
    (-5,0.3090)
    (-4,0.3333)
    (-3,0.3660)
    (-2,0.4142)
    (-1,0.5000)
    (0,1.0000)
    (1,0.5000)
    (2,0.4142)
    (3,0.3660)
    (4,0.3333)
    (5,0.3090)
    (6,0.2899)
    (7,0.2743)
    (8,0.2612)
    (9,0.2500)
    (10,0.2403)
};
\coordinate (leftaxis) at (axis cs:-10,0);
\end{axis}
\draw[-stealth] (leftaxis) -- ++(-0.01cm,0);
\end{tikzpicture}
\caption{Initial trap configuration of Example \ref{ex:A3holds} with $\beta = 0.5$.}
        \label{fig:test3}
\end{figure}
Observe that $0<\nu_y \le 1$ for all $y\in \Z$ and hence \ref{list:A1} holds. However, the condition \ref{list:A2} is not satisfied, since $\nu_y < \nu_0$ for all $y\in \Z\setminus\{0\}$. It can be verified (see Lemma~\ref{clm:A3example}) that \ref{list:A3} holds with $$\alpha = 1-\frac{\beta}{2},\,\,\, L(\cdot) \equiv 1 \,\,\,\text{ and } \,\,\bar{m} = \frac{2^{1-\frac{\beta}{2}}\Gamma\left(\frac{1-\beta}{2}\right)}{(2-\beta)\sqrt{\pi}},$$ 
 and that (see Proposition~\ref{prop:LB})  $$\liminf_{t\to \infty}\frac{1}{t^{\frac{1-\beta}{2}}} \log Z^{\gamma}_t \ge  - \sqrt{2}\bar{m}\frac{\Gamma\left(\frac{4-\beta}{2}\right)}{\Gamma\left(\frac{3-\beta}{2}\right)}.$$ Since $\frac{1-\beta}{2}< \frac{1}{2}$, the annealed survival probability decays slower than the cases where the Poisson field satisfies \ref{list:A2}, which include the homogeneous setting. 
\end{example}

\begin{example}
\label{ex:integrable}
    Consider the case where $(\nu_y)_{y\in \Z^d}$ is integrable. Let $B_r$ denote the ball of radius $r$ around the origin, $B_r = \{x\in \Z^d: \lVert x \rVert_\infty \le r\}$. Let $A_n$ be the event that $N_y = 0$ for all $y\in B_n$. Then $A_{n+1}\subseteq A_n$ and the event $A$ that no traps are placed on $\Z^d$ at time $t=0$ can be written as $A = \bigcap_{n=1}^{\infty}A_n$. Then the annealed survival probability is lower bounded by $$Z^\gamma_{t} \ge \pr(A) = \lim_{n\to \infty}\pr(A_n) = \lim_{n\to \infty} e^{-\sum_{y\in B_n}\nu_y} = e^{-\sum_{y\in \Z^d}\nu_y}.$$
    The annealed survival probability is bounded away from zero and does not even decay to zero as $t\to\infty$. 
\end{example}

\subsection{Discussion}\label{subsec:discuss}
 A key difficulty in our setting is that the trap field in the inhomogeneous setting is not time-reversible and consequently, connection to the Parabolic Anderson Model via the Feynman-Kac representation of the survival probability as done in \cite{DGRS2012} cannot be applied here. Consequently, one cannot use the subaddivity arguments like in \cite{DGRS2012} to prove the existence of annealed Lyapunov exponent. Another possible approach is to first show that the Poisson field equilibrates to a Poisson system of random walks with constant intensity $\bar{m}$. One then combines this convergence with the results from the homogeneous setting, as in \cite{DGRS2012}, to obtain the asymptotics for the annealed survival probability. However, making this approach rigorous would require a precise rate of convergence in a sufficiently strong norm, and we were unable to do this. A cumulant expansion technique was used in \cite{CG1984} for studying the homogeneous trap environment. We were able to extend this framework with some key modifications, to establish our results for the inhomogeneous Poisson environment and for the annealed survival probability of a random walk in this environment.
  
 We first describe an overview of the proof of Theorem~\ref{thm:annealed}.  The proof proceeds by obtaining matching lower and upper bounds for the annealed survival probability. The lower bound is obtained by using a classical confinement survival strategy (as outlined in \cite{DGRS2012}) in which, for a fixed time $t$, the random walker is confined in a ball of radius $R_t$ in which no traps are placed initially and no traps from outside the ball of radius $R_t$ enters this region until time $t$. By optimally choosing $R_t$, we obtain a lower bound for the survival probability in dimensions $d=1,2$. This is established in Proposition~\ref{prop:LB} below. A key ingredient in the proof of Proposition~\ref{prop:LB} is the asymptotics for the annealed survival probability when $\kappa =0$. This is proved in Proposition~\ref{lm:kappa0} where instead of using the Feynman-Kac representation, we use an adaptation of the cumulant expansion technique, mentioned above. We prove both these propositions under the weaker assumption \ref{list:A3} and both these results are of independent interest. The upper bound is obtained using the so-called \textit{Pascal principle}. It states that conditional on the random walk X, the annealed survival probability is maximized when $X \equiv 0$. The discrete time version of this result in the homogeneous setting was first proved by Moreau et al in \cite{MOBC2003, MOBC2004}, where they named it the Pascal principle, because Pascal once asserted that \emph{all misfortune of men comes from the fact that he does not stay peacefully in his room}. The continuous time version of {\textit{Pascal Principle}} was proved in \cite[Proposition 2.1]{DGRS2012} using the discrete-time version of Pascal principle. In Proposition~\ref{prop:pascal}, we provide a direct proof of the continuous time Pascal principle for the inhomogeneous trap fields under the assumption \ref{list:A2}, i.e., whenever the mean number of traps is minimal at the origin. 

The proof of Theorem~\ref{thm:Dt} and Theorem~\ref{thm:Nt} is similar in spirit to the proof of \cite[Theorem~1, Theorem~2]{CG1984}. The proof relies on the law of large numbers, central limit theorem and large deviations for dependent sequences of real random variables under a suitable hypothesis (see \ref{list:H1}). We state these well known results (see \cite{S1969,PS1975, S1978}) in Lemma~\ref{lm:largedeviations} in Section~\ref{sec:dtntproof}. We note that, in \cite{CG1984}, the authors, while studying the homogeneous case, considered occupational time functionals $D_t$ and $N_t$ for any finite subset $A\subseteq \Z^d$. We only consider the case $A = \{0\}$ and believe the same result will hold for general $A$, but we would need to modify the Assumption~\ref{list:A3} suitably.

We conclude this section with some open problems, which we plan to explore in future work. Observe that Theorem \ref{thm:annealed} shows that, under the assumptions \ref{list:A1} and \ref{list:A2}, the annealed survival probability decays sub-exponentially, as $\exp\{-C t^{1/2}\}$ in $d=1$, as $\exp\{-C' {\frac{t}{\log t}}\}$ in $d=2$. When \ref{list:A2} fails but \ref{list:A3} holds, as in Example~\ref{ex:A3holds}, we showed that the annealed survival probability is bounded below by $\exp \{-C'' t^{\frac{1-\beta}{2}}\}, 0<\beta<1$. We conjecture that the annealed survival probability in this case does decay at this slower rate. However, we were not able to prove matching upper bounds, since our current proof of Pascal Principle requires \ref{list:A2}. We believe that the weaker assumption \ref{list:A3} may indeed allow for qualitatively different asymptotic behavior of the annealed survival probability.

The entire problem remains open in $d\ge3$. The random walk is transient in dimensions bigger than or equal to three and the confinement strategy for lower bound no longer produces matching bounds. As explained in the beginning, due to lack of reversibility, new techniques are necessary to determine the correct asymptotics. One possible approach would be to use level three large deviations and apply Varadhan's lemma to establish the same. It would also be interesting to study the analogous questions for trapping problems on other infinite graphs, since decay rates depend on the behaviour of the random walk in the underlying graph. 

\paragraph{Acknowledgements}
I acknowledge support of the Department of Atomic Energy, Government of India, under project no. RTI4019.
I express my sincere gratitude to Siva Athreya, Frank den Hollander and Deepak Dhar, for their guidance, valuable suggestions throughout the course of this project  and detailed feedback during preparation of the manuscript. I would also like to thank Eleanor Archer, Alexander Drewitz and Rongfeng Sun for providing feedback on the manuscript.

\paragraph{Layout of the paper} The remainder of the paper is organised as follows. In Section~\ref{annealed}, we state the key propositions and prove Theorem~\ref{thm:annealed}. The proofs of the key propositions, Proposition~\ref{prop:pascal}, Proposition~\ref{lm:kappa0}, and Proposition~\ref{prop:LB}, are presented in Section~\ref{proofpropositions}. In Section~\ref{sec:dtntproof}, we prove Theorem~\ref{thm:Dt} and Theorem~\ref{thm:Nt}. Finally, in Section~\ref{sec:exrev}, we prove the lemmas corresponding to the examples presented in Section~\ref{subsec:examples}.

\section{Proof of Theorem~\ref{thm:annealed}}
\label{annealed}
Recall that the proof of Theorem~\ref{thm:annealed} is based on deriving matching lower and upper bounds for the annealed survival probability. We shall state three key propositions, whose proofs we will defer to Section~\ref{proofpropositions}, and complete the proof of Theorem~\ref{thm:annealed} assuming these three propositions. 

Throughout the paper, we use the shorthand $ p_t(x,y):=\pr_x^Y(Y(t)=y)$
and write $p_t(x):=p_t(0,x)$ whenever convenient. We also denote $G_t(x,y) = \int_0^t p_s(x,y) \, \dd s.$ Our first proposition is the continuous time Pascal principle for the inhomogeneous trap fields satisfying assumptions \ref{list:A1} and \ref{list:A2}.
\begin{proposition}\label{prop:pascal}{\em \bf[Pascal Principle]}
 If {\rm\ref{list:A1}} and {\ref{list:A2}} hold,
then for all piecewise constant $X:[0,t]\to \Z^d$ with $X(0)=0$ and finite number of discontinuities, we have
  \begin{equation}
   \mathbb{E}^{\xi} \left[ \exp \left\{ - \gamma \int_0^t \xi (s, X (s))
    \text{d} s \right\} \right] \leq \mathbb{E}^{\xi} \left[ \exp \left\{ -
    \gamma \int_0^t \xi (s, 0) \text{d}s \right\} \right]. \label{eqn:pascalresult}
  \end{equation}
\end{proposition}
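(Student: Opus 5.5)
The plan is to reduce \eqref{eqn:pascalresult} to a comparison of single-trap quantities, using the Poisson structure of $\xi$. Fix a path $X$ as in the statement. Conditionally on the traps' starting points the traps are independent, and $N_y\sim$ Poisson$(\nu_y)$. The exponential formula for Poisson variables therefore gives
\begin{equation*}
\mathbb{E}^{\xi}\Big[e^{-\gamma\int_0^t\xi(s,X(s))\,\dd s}\Big]=\exp\Big\{-\sum_{y\in\Z^d}\nu_y\big(1-u^X(y)\big)\Big\},\qquad u^X(y):=\E^Y_y\Big[e^{-\gamma\int_0^t\1\{Y(s)=X(s)\}\,\dd s}\Big].
\end{equation*}
The sum converges because of \ref{list:A1} together with $1-u^X(y)\le\gamma\int_0^t p_s(y,X(s))\,\dd s$. (For $\gamma=\infty$ one reads $1-u^X(y)$ as the probability that $Y$ ever meets $X$; that case follows by monotone limits in $\gamma$.) It therefore suffices to show
\begin{equation*}
\sum_y\nu_y\big(1-u^X(y)\big)\ \ge\ \sum_y\nu_y\big(1-u^0(y)\big).
\end{equation*}

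Next I decompose $1-u^X(y)$ according to the \emph{last} time the trap sits on the walker. Since $\tfrac{\dd}{\dd s}e^{-\gamma\int_s^t\1\{Y(r)=X(r)\}\dd r}=\gamma\1\{Y(s)=X(s)\}e^{-\gamma\int_s^t\cdots}$, we get
\begin{equation*}
1-u^X(y)=\gamma\int_0^t\E^Y_y\Big[\1\{Y(s)=X(s)\}\,e^{-\gamma\int_s^t\1\{Y(r)=X(r)\}\dd r}\Big]\dd s .
\end{equation*}
Applying the Markov property at time $s$, multiplying by $\nu_y$, summing over $y$ and using \eqref{eqn:poissonmean} yields
\begin{equation*}
\sum_y\nu_y\big(1-u^X(y)\big)=\gamma\int_0^t m\big(s,X(s)\big)\,q^X_s\,\dd s,\qquad q^X_s:=\E^Y_{0}\Big[e^{-\gamma\int_0^{t-s}\1\{Y(r)=X(s+r)-X(s)\}\dd r}\Big].
\end{equation*}
Here I have translated so that the trap starts at the origin. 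For $X\equiv0$ this becomes $\gamma\int_0^t m(s,0)\,q^0_s\,\dd s$. Assumption \ref{list:A2} gives $m(s,X(s))\ge m(s,0)$ for every $s$. Hence the proposition follows once I show the pointwise single-trap inequality
\begin{equation*}
q^X_s\ \ge\ q^0_s\qquad\text{for all } s\in[0,t].
\end{equation*}
Informally: a walk started on a target accumulates, in law, the most local time with it when the target stays at its starting point.

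The main obstacle is this single-trap inequality. Concretely, for a piecewise constant $f$ with $f(0)=0$ and finitely many jumps, and for every horizon $T$, I need $\E_0\big[e^{-\gamma\int_0^T\1\{Y(r)=f(r)\}\dd r}\big]\ge\E_0\big[e^{-\gamma\int_0^T\1\{Y(r)=0\}\dd r}\big]$. I would prove it by induction on the number of jumps of $f$, conditioning at the first jump time $\tau$ and working with the whole function $x\mapsto w_T(x):=\E_x\big[e^{-\gamma\int_0^T\1\{Y(r)=0\}\dd r}\big]$ rather than its value at the origin.

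The structural facts I expect to need are two. First, $w_T$ is minimised at $x=0$ and is symmetric. Second, the killed semigroup $\E_0\big[e^{-\gamma\int_0^\tau\1\{Y=0\}}\,\cdot\,;\,Y(\tau)=x\big]$ concentrates mass at the origin at least as much as any shifted version. At the jump, the relevant quantity becomes $w(Y(\tau)-f(\tau))$ instead of $w(Y(\tau))$. I would try to show that shifting the argument only increases the expectation, using that $w$ is minimal at $0$ and that the killed distribution of $Y(\tau)$ is symmetric with its largest weight at $0$. This is a rearrangement-type step on $\Z^d$, comparing weights via $p_r(x)\le p_r(0)$.

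If the pointwise statement turns out to be too strong, my fallback is to establish directly the integrated inequality $\int_0^t m(s,X(s))q^X_s\,\dd s\ge\int_0^t m(s,0)q^0_s\,\dd s$. I would do this by induction on the discontinuities of $X$ itself, again using \ref{list:A2} at each jump time.
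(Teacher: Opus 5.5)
Your Poisson reduction and the last-exit identity $\sum_y\nu_y(1-u^X(y))=\gamma\int_0^t m(s,X(s))\,q^X_s\,\dd s$ are correct. The gap is the step everything rests on: the pointwise inequality $q^X_s\ge q^0_s$ is false. Take $d=1$ and $t=1$, and let $X$ sit at $0$ and jump to $1$ at a small time $\epsilon$. Choose $\gamma$ large with $\gamma\epsilon$ large, and look at $s=0$. Let $\sigma\sim\mathrm{Exp}(1)$ be the first jump of the trap.

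\begin{itemize}
\item If $\sigma\ge\epsilon$, the weight is at most $e^{-\gamma\epsilon}$.
\item Otherwise the weight carries a factor $e^{-\gamma\sigma}$, with $\E[e^{-\gamma\sigma}]=(1+\gamma)^{-1}$. The trap then lands on $\pm1$ with probability $\tfrac12$ each.
\item From $+1$, the trap sits on $X$ throughout $[\epsilon,\epsilon+\delta]$ unless it jumps within a window of length at most $\epsilon+\delta$. So its remaining weight is at most $\epsilon+\delta+e^{-\gamma\delta}$.
\item From $-1$, bound the remaining weight by $1$.
\item For $X\equiv0$, a trap that lands on $\pm1$ avoids $0$ up to time $1$ with probability at least $e^{-1/2}$, since it suffices that no step towards $0$ occurs.
\end{itemize}

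Hence
\[
q^X_0\le e^{-\gamma\epsilon}+\frac{1+\epsilon+\delta+e^{-\gamma\delta}}{2(1+\gamma)},\qquad q^0_0\ge\frac{1-e^{-(1+\gamma)}}{1+\gamma}\,e^{-1/2},
\]
and $\epsilon=\delta=10^{-2}$, $\gamma=10^4$ give $q^X_0<q^0_0$.

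The reason is that killing depletes the mass at the origin. For small $\tau$ with $\gamma\tau\gg1$, the killed law of $Y(\tau)$ sits essentially on $\pm1$, not at $0$, and the landing site $+1$ is exactly where the target moves. So both your second ``structural fact'' (largest killed weight at $0$) and the rearrangement step built on it fail.

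The proposition itself is true, but only through cancellation in $s$. In this example, with $m$ constant, the deficit for $s$ before the jump is repaid by the $s$ within $O(1/\gamma)$ of the jump time. There $q^X_s$ is of order one, while $q^0_s$ is of order $1/\gamma$. Your fallback (induction on jumps using \ref{list:A2}) gives no mechanism for this cancellation; it essentially restates the goal.

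The missing idea, which is what the paper does, is to work with the forward killing density $\tilde\Phi^{\gamma,X}\ge0$ and its renewal equation
\[
\tilde\Phi^{\gamma,X}(t)=\gamma m(t,X(t))-\gamma\int_0^t p_{t-s}(X(s),X(t))\,\tilde\Phi^{\gamma,X}(s)\,\dd s .
\]
Here the path enters only through $m(t,X(t))\ge m(t,0)$ and the kernel bound $p_{t-s}(X(s),X(t))\le p_{t-s}(0,0)$. Set $f:=\tilde\Phi^{\gamma,X}-\tilde\Phi^{\gamma,0}$. These two bounds give
\[
g(t):=f(t)+\gamma\int_0^t p_{t-s}(0,0)f(s)\,\dd s\ \ge\ \gamma\bigl(m(t,X(t))-m(t,0)\bigr)\ \ge\ 0 .
\]
The paper then deduces $\int_0^t f\ge0$ by integration by parts, the heat equation and a first-crossing argument. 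Equivalently, inverting the Volterra operator gives $\Phi^{\gamma,X}(t)-\Phi^{\gamma,0}(t)=\int_0^t g(s)\,w_{t-s}(0)\,\dd s\ge0$. So the comparison that works keeps your last-exit structure, but uses the \emph{immobile} weights $w_{t-s}(0)=q^0_s$ and moves all path dependence into a nonnegative source term.
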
 
Proposition~\ref{prop:pascal} provides the upper bound for the annealed survival probability. Observe  that the right-hand side of \eqref{eqn:pascalresult} is precisely the annealed survival probability when the walker is stationary at the origin ($X\equiv 0$). We next prove a Lemma which shows that assumptions \ref{list:A1} and \ref{list:A2} imply \ref{list:A3} with $\alpha = 1$ and $L(\cdot)\equiv 1$. 
\begin{lemma}\label{lm:alpha1}
   If {\rm\ref{list:A1}} and {\rm \ref{list:A2}} hold, then there exists a constant $0< \bar{m}< \infty$ such that 
    $$\lim_{t\to \infty}\frac{1}{t}\int_0^t m(s,0)\, \dd s = \bar{m}.$$
\end{lemma}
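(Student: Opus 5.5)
The plan is to show that $t\mapsto m(t,0)$ is nondecreasing and bounded, so that it converges. Its Cesàro average then converges to the same limit, and that limit is the desired $\bar m$.

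\emph{Monotonicity.} Write $m(t,x)=\sum_y \nu_y p_t(y,x)$ as in \eqref{eqn:poissonmean}. By the Chapman--Kolmogorov equation and Tonelli (all terms are nonnegative), for $t,s\ge 0$,
\begin{equation*}
m(t+s,0)=\sum_{y}\nu_y\sum_{x}p_t(y,x)p_s(x,0)=\sum_{x}m(t,x)\,p_s(x,0).
\end{equation*}
Assumption \ref{list:A2} gives $m(t,x)\ge m(t,0)$ for every $x$. Since $\sum_x p_s(x,0)=\sum_x p_s(0,x)=1$ by symmetry of the simple random walk, we get $m(t+s,0)\ge m(t,0)$. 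Hence $t\mapsto m(t,0)$ is nondecreasing on $[0,\infty)$.

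\emph{Boundedness and positivity.} By \ref{list:A1}, $m(t,0)\le \nu^*\sum_y p_t(y,0)=\nu^*$, so the limit $\bar m:=\lim_{t\to\infty}m(t,0)$ exists and satisfies $\bar m\le\nu^*<\infty$. For positivity, note that $\nu^*>0$ forces $\nu_{y_0}>0$ for some $y_0$. Since $p_t(y_0,0)>0$ for every $t>0$ (continuous-time walk), we have $m(1,0)\ge \nu_{y_0}p_1(y_0,0)>0$. By monotonicity, $\bar m\ge m(1,0)>0$.

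\emph{Cesàro step.} The map $s\mapsto m(s,0)$ is measurable; it is in fact continuous, by dominated convergence, since $\nu$ is bounded and $p_s$ is continuous in $s$. Given $\varepsilon>0$, choose $T$ with $|m(s,0)-\bar m|<\varepsilon$ for $s\ge T$. Then
\begin{equation*}
\Bigl|\tfrac1t\int_0^t m(s,0)\,\dd s-\bar m\Bigr|\le \tfrac{2\nu^* T}{t}+\varepsilon,
\end{equation*}
which yields the claim.

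No step is a serious obstacle. The only point that needs care is the monotonicity argument, specifically the interchange of sums and the identity $\sum_x p_s(x,0)=1$, and both are justified by nonnegativity and symmetry of the kernel.
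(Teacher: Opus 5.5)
Your proof is correct, and its key step is the same as the paper's: Chapman--Kolmogorov together with \ref{list:A2} gives $m(t+s,0)=\sum_x p_s(x,0)\,m(t,x)\ge m(t,0)$, which is exactly \eqref{eqn:msmarkov}--\eqref{eqn:monotonems0}.

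Where you differ is in how the limit is obtained. The paper uses monotonicity to show that $t\mapsto\int_0^t m(s,0)\,\dd s$ is superadditive. It then applies Fekete's lemma to get $\bar m=\sup_{t>0}\frac1t\int_0^t m(s,0)\,\dd s$. For positivity it uses monotonicity a second time, via $\frac1t\int_0^t m(s,0)\,\dd s\ge\frac12 m(t/2,0)$.

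You instead observe that a bounded nondecreasing function converges, so $m(t,0)\to\bar m=\sup_t m(t,0)$, and you then pass to the Ces\`aro average. Your route buys three things:
\begin{itemize}
\item It is more elementary. Fekete is redundant here, because monotonicity is the only input the paper uses to get superadditivity.
\item It proves more: $m(t,0)$ itself converges, not just its time average.
\item Positivity becomes immediate from $\bar m\ge m(1,0)>0$.
\end{itemize}

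One small point: you justify continuity of $s\mapsto m(s,0)$ ``by dominated convergence since $\nu$ is bounded.'' That needs a dominating function summable in $y$, uniform for $s$ in compact sets; alternatively, use Scheff\'e's lemma with $\sum_y p_s(y,0)=1$. You don't actually need continuity, though. A monotone function is Borel measurable, and that is all your Ces\`aro step uses.
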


\begin{proof}[Proof of Lemma~\ref{lm:alpha1}]
We first establish the existence of the limit $\lim_{t\to\infty}\frac{1}{t}\int_0^tm(s,0)\, \dd s$ using Fekete's subadditivity lemma and then show that the limit is finite and strictly positive.
For $t_1,t_2 \ge 0$, 
\begin{equation}
    \int_0^{t_1+t_2} m(s,0) \, \dd s = \int_0^{t_1} m(s,0) \, \dd s + \int_{0}^{t_2} m(s+t_1,0) \, \dd s  \label{eqn:supadd}
\end{equation}
Using the definition of $m(t,0)$, symmetry of $p(\cdot,\cdot)$,
together with the Markov property, for $s\ge 0$,
\begin{align}
    m(s+t_1 ,0 ) &= \sum_{y\in \Z^d}\nu_y  p_{s+t_1}(y,0) = \sum_{y\in \Z^d}\nu_y  \sum_{x\in \Z^d} p_{s}(y,x) p_{t_1}(x,0)\nonumber\\
    &= \sum_{x\in \Z^d} p_{t_1}(x,0) m(s,x), \label{eqn:msmarkov}
\end{align}
From \ref{list:A2} and \eqref{eqn:msmarkov} we have,
\begin{equation}
    m(s+t_1,0) \ge m(s,0) \label{eqn:monotonems0}
\end{equation}
Using \eqref{eqn:monotonems0} in \eqref{eqn:supadd},
$$\int_0^{t_1+t_2} m(s,0) \, \dd s \ge \int_0^{t_1} m(s,0) \, \dd s + \int_{0}^{t_2} m(s,0) \, \dd s.$$
Thus, $\int_0^{t} m(s,0) \, \dd s$ is superadditive in $t$. Hence by Fekete's subadditivity lemma, $\lim_{t\to\infty}\frac{1}{t}\int_0^tm(s,0)\, \dd s$ exists and is given by 
$$\bar{m} = \sup_{t>0}\frac{1}{t}\int_0^t m(s,0)\, \dd s.$$
To show finiteness, note that \ref{list:A1} implies $\nu_y \le \nu^* <\infty$ uniformly in $y$, and hence $m(t,0) \le \nu^*$ for all $t\ge 0$. Consequently, $\bar{m}\le \nu^* < \infty$. To show that $\bar{m}$ is strictly positive, observe from \eqref{eqn:monotonems0} that $m(t,0)$ is non-decreasing in $t$. Therefore, 
$$\frac{1}{t}\int_0^tm(s,0)\, \dd s \ge \frac{1}{t}\int_{t/2}^tm(s,0)\, \dd s \ge \frac{1}{2}m(t/2,0).$$
Since there exists $y\in \Z^d$ such that $\nu_y >0$ (otherwise, the trap field is identically zero), there exists $s_0 >0$ such that $m(s_0,0)>0$. Then for all $t\ge 2s_0$, 
$$\frac{1}{t}\int_0^tm(s,0)\, \dd s \ge \frac{1}{2}m(s_0,0) >0.$$
It follows that $\bar{m}>0$, completing the proof. 
\end{proof}
Given the above lemma, we shall state our next two results under the weaker assumption \ref{list:A3}.
Our next proposition gives the asymptotic behaviour of the annealed survival probability when the random walk $X$ remains stationary at the origin ($\kappa=0$).
\begin{proposition}
 \label{lm:kappa0}{\em \bf[$\kappa = 0$ solution]}
Assume that $\gamma \in (0,\infty],\kappa = 0,$ {\rm \ref{list:A1}} and {\rm \ref{list:A3}}, then
    \begin{equation}
      Z^{\gamma,0}_{t}:=  \E^\xi \left[\exp\left\{-\gamma\int_0^t \xi(s,0) \, \dd s\right\}\right] = \begin{cases}
            \exp\left\{- \sqrt{2}\, \bar{m} \frac{\Gamma(\alpha+1)}{\Gamma(\alpha+\frac{1}{2}
        )}t^{(\alpha-\frac{1}{2})}L(t)(1+o(1))\right\},& \text{if } d=1,\\
        \exp\left\{- {\pi\,\bar{m}}\frac{t^{\alpha}}{\log t}L(t)(1+o(1))\right\}, & \text{if } d=2,\\
      \exp\left\{-{\bar{m}}\frac{\gamma }{1 + \gamma G_d(0)}t^{\alpha}L(t)(1+o(1))\right\}, & \text{if } d\ge 3.
        \end{cases}
        \label{eqn:kappazerofinal}
    \end{equation}
    where  $0<\bar{m} = \lim\limits_{t\to \infty}\frac{1}{t^\alpha L(t)}\int_0^t m(s,0)\, \dd s< \infty$ and
 $G_d(0):= \int_0^t p_t(0,0)\,\dd t$ is the Green function of a simple symmetric random walk on $\Z^d$ with jump rate $1$. 
\end{proposition}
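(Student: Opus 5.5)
The starting point is that the trap configuration is a Poisson system of independent walks, so for $X\equiv 0$ the expectation factorises exactly over initial traps. For one trap starting at $y$, set
\[
w_t(y):=\E_y^Y\Big[1-\exp\Big\{-\gamma\int_0^t \1_{\{Y(s)=0\}}\,\dd s\Big\}\Big],
\]
so that $Z^{\gamma,0}_t=\exp\{-\sum_y \nu_y w_t(y)\}$. The task is then to show that $\sum_y \nu_y w_t(y)$ behaves like $C_d\,t^\alpha L(t)/b_t$. Here $b_t=t^{1/2}$ for $d=1$, $b_t=\log t$ for $d=2$, and $b_t=1$ for $d\ge3$. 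The constant $C_d$ is $\sqrt2\,\Gamma(\alpha+1)/\Gamma(\alpha+\tfrac12)$ in $d=1$, $\pi$ in $d=2$, and $\gamma/(1+\gamma G_d(0))$ in $d\ge3$; in every case it multiplies $\bar m$.

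To compute $w_t(y)$, condition on the first hitting time $\tau$ of $0$:
\[
w_t(y)=\int_0^t \pr_y(\tau\in \dd s)\,u_{t-s},\qquad u_r:=\E_0\Big[1-e^{-\gamma \ell_r}\Big],
\]
where $\ell_r$ is the local time at $0$ up to time $r$. Then $\sum_y \nu_y w_t(y)=\int_0^t h(s)\,u_{t-s}\,\dd s$, where $h(s)=\sum_y\nu_y f_y(s)$ and $f_y$ is the hitting density. The renewal identity $p_s(y,0)=\int_0^s f_y(r)\,p_{s-r}(0)\,\dd r$, summed against $\nu_y$, gives $m(s,0)=(h*p_\cdot(0))(s)$. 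In Laplace variables this reads $\hat m(\lambda)=\hat h(\lambda)\hat p(\lambda)$, where $\hat p(\lambda)=\int_0^\infty e^{-\lambda s}p_s(0)\,\dd s$.

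Assumption~\ref{list:A3} with Karamata's Tauberian theorem gives $\hat m(\lambda)\sim \bar m\,\Gamma(\alpha+1)\lambda^{-\alpha}L(1/\lambda)$ as $\lambda\downarrow0$. The return-probability asymptotics are $\hat p(\lambda)\sim (2\lambda)^{-1/2}$ in $d=1$ (since $p_s(0)\sim(4\pi s)^{-1/2}\cdot$const), $\hat p(\lambda)\sim \frac{1}{\pi}\log(1/\lambda)$ in $d=2$, and $\hat p(\lambda)\to G_d(0)$ in $d\ge3$. Consequently $\hat h(\lambda)=\hat m(\lambda)/\hat p(\lambda)$ is regularly varying at $0$.

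For $u_r$, the same renewal argument at $0$ gives $\hat u(\lambda)=\gamma\,\hat p(\lambda)\big/\big(\lambda(1+\gamma\hat p(\lambda))\big)$, with the obvious limit when $\gamma=\infty$. Hence $u_r\to1$ in $d=1,2$, and $u_r\to \gamma G_d(0)/(1+\gamma G_d(0))$ in $d\ge3$. The quantity we need has Laplace transform $\hat h(\lambda)\,\lambda\hat u(\lambda)$, which is
\[
\frac{\hat m(\lambda)}{\hat p(\lambda)}\cdot\frac{\gamma\hat p(\lambda)}{1+\gamma\hat p(\lambda)}.
\]
In $d=1,2$ this is asymptotic to $\hat m(\lambda)/\hat p(\lambda)$, and in $d\ge3$ to $\hat m(\lambda)\,\gamma/(1+\gamma G_d(0))$.

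Since $\int_0^t h*u$ is nondecreasing in $t$ (both $h$ and $u$ are nonnegative and $u$ is nondecreasing), Karamata's Tauberian theorem applied to its Laplace–Stieltjes transform gives its regular variation in $t$. For $d=1$ the transform behaves like $\bar m\,\Gamma(\alpha+1)\sqrt2\,\lambda^{1/2-\alpha}L(1/\lambda)$, so the sum is asymptotic to $\sqrt2\,\bar m\,\Gamma(\alpha+1)\,t^{\alpha-1/2}L(t)/\Gamma(\alpha+\tfrac12)$. The condition $\alpha>1/2$ is exactly what makes this exponent positive and the Tauberian step valid. In $d=2$ the factor $\log(1/\lambda)$ is slowly varying, which gives $\pi\bar m\,t^\alpha L(t)/\log t$. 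In $d\ge3$ the result is the stated constant times $\bar m\, t^\alpha L(t)$. Assumption~\ref{list:A1} is used only to ensure that all the sums converge and that Fubini applies.

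The main obstacle is rigour at the Tauberian step. The constants must come out exactly, including the $\Gamma$-factor bookkeeping in $d=1$ and the factor $\pi$ from $\hat p(\lambda)\sim\pi^{-1}\log(1/\lambda)$ in $d=2$, and the monotonicity needed for Karamata must be justified. I would handle monotonicity by noting that $t\mapsto\sum_y\nu_y w_t(y)$ is nondecreasing, because each $w_t(y)$ is nondecreasing in $t$. This lets me apply the Tauberian theorem directly to the measure $\dd\big(\sum_y\nu_y w_t(y)\big)$, whose Laplace transform is exactly the product computed above.
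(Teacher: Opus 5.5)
Your proof is correct. It reaches exactly the paper's key transform identity: your Laplace--Stieltjes transform $\hat h(\lambda)\,\lambda\hat u(\lambda)=\gamma\hat m(\lambda)/(1+\gamma\hat p(\lambda))$ is the paper's $\hat{\tilde\Phi}^{\gamma,0}(\lambda)$, and your monotone function $\sum_y\nu_y w_t(y)$ is the paper's $\Phi^{\gamma,0}(t)$. The endgame is therefore the same as the paper's: the local limit theorem for $\hat p$, the Tauberian theorem for $\hat m$, and Karamata to invert.

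The difference is in how the identity is obtained. The paper expands $\Phi^{\gamma,0}$ in the cumulants $\phi_n$ and resums them into the Volterra equation $\tilde\Phi^{\gamma,0}=\gamma m-\gamma\,p*\tilde\Phi^{\gamma,0}$. It then treats $\gamma=\infty$ separately through the first-passage identity $m(t,0)=\nu_0p_t(0,0)+\int_0^tp_{t-s}(0,0)\,\tilde\Phi^{\infty,0}(\dd s)$.

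You factor through the first hitting time of the origin instead. Your $h$ must be read as a measure carrying the atom $\nu_0\delta_0$ from $y=0$, since $\tau=0$ there. With that reading, $h=\nu_0\delta_0+\tilde\Phi^{\infty,0}$, so $\hat h=\hat m/\hat p$ is precisely the paper's $\gamma=\infty$ identity. The finite-$\gamma$ case then follows by convolving with the single-site quantity $u$ and solving its renewal equation $u=\gamma\,p*(1-u)$.

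What your route buys:
\begin{itemize}
\item It treats all $\gamma\in(0,\infty]$ uniformly, with no power series in $\gamma$.
\item It makes the constants transparent. In the paper's $\gamma=\infty$ display, the factor $\Gamma(\alpha+1)$ coming from $\hat m$ is dropped. This is harmless for $\alpha=1$, which is all Theorem~\ref{thm:annealed} needs. Your computation gives exactly the constants in the statement for every $\alpha$.
\end{itemize}

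What the paper's route buys is reuse. The same integral identity, written for a general path $X$, drives the Pascal principle, and the coefficients $\phi_n$ are reused in the proof of Theorem~\ref{thm:Dt}.

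Two small slips. First, in $d=1$ you should write simply $p_s(0)\sim(2\pi s)^{-1/2}$. Second, the function that must be monotone for Karamata is $(h*u)(t)$ itself, not $\int_0^t h*u$. Your closing remark (each $w_t(y)$ is nondecreasing in $t$) already gives the right justification.
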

The final Proposition is the lower bound for the annealed survival probability. 
\begin{proposition}\label{prop:LB}{\em \bf[Lower bound]}
   For $\gamma \in (0,\infty],\kappa \ge 0,$ {\rm \ref{list:A1}} and {\rm\ref{list:A3}}, we have 
    \begin{equation}
    \begin{split}
       \liminf_{t\to \infty}\frac{1}{t^{(\alpha-\frac{1}{2})}L(t)} \log Z^{\gamma}_t &\ge  - \sqrt{2}\, \bar{m} \frac{\Gamma(\alpha+1)}{\Gamma(\alpha+\frac{1}{2})}, \quad \text{if } d=1,\\
       \liminf_{t\to \infty}\frac{\log t}{t^{\alpha}L(t)} \log Z^{\gamma}_t &\ge  - {\pi\,\bar{m}}, \qquad \qquad \qquad \text{if } d=2.
        \label{eqn:lowerbound}
        \end{split}
    \end{equation}
    where  $0<\bar{m} = \lim\limits_{t\to \infty}\frac{1}{t^\alpha L(t)}\int_0^t m(s,0)\, \dd s< \infty$.
\end{proposition}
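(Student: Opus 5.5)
We follow the confinement strategy of \cite{DGRS2012}. Fix a radius $R=R_t$ to be chosen later and let $B_R=\{x\in\Z^d:\lVert x\rVert_\infty\le R\}$. We lower bound $Z^\gamma_t$ by the probability of the intersection of two events. The first, $E_1$, is that the walker $X$ stays in $B_R$ up to time $t$. The second, $E_2$, is that no trap is located in $B_R$ at any time $s\in[0,t]$. Since $X$ and $\xi$ are independent and on $E_1\cap E_2$ the integral $\int_0^t\xi(s,X(s))\,\dd s$ vanishes, we get
\begin{equation*}
Z^\gamma_t\ \ge\ \pr^X_0(E_1)\,\pr^\xi(E_2)
\end{equation*}
for every $\gamma\in(0,\infty]$. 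Standard eigenvalue estimates for the walk killed on exiting a box give
\begin{equation*}
\log\pr^X_0(E_1)\ \ge\ -c_d\,\kappa\, t/R^2\,(1+o(1)),
\end{equation*}
with $c_d$ the principal Dirichlet eigenvalue constant. For $\pr^\xi(E_2)$, the Poisson thinning property gives
\begin{equation*}
\pr^\xi(E_2)=\exp\Bigl\{-\sum_y\nu_y\,\pr_y^Y(\tau_{B_R}\le t)\Bigr\},
\end{equation*}
where $\tau_{B_R}$ is the hitting time of $B_R$. So the problem reduces to estimating $\Lambda_R(t):=\sum_y\nu_y\pr_y^Y(\tau_{B_R}\le t)$.

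The key step is to compare $\Lambda_R(t)$ with the quantity $\Lambda_0(t)=\sum_y\nu_y\pr^Y_y(\tau_0\le t)=\E^\xi[N_t]$, whose asymptotics are governed by Proposition~\ref{lm:kappa0}. Indeed, at $\gamma=\infty$,
\begin{equation*}
Z^{\infty,0}_t=\pr(N_t=0)=e^{-\E^\xi[N_t]},
\end{equation*}
so Proposition~\ref{lm:kappa0} gives $\Lambda_0(t)=\sqrt2\,\bar m\frac{\Gamma(\alpha+1)}{\Gamma(\alpha+\frac12)}t^{\alpha-\frac12}L(t)(1+o(1))$ in $d=1$, and $\pi\bar m\, t^\alpha L(t)/\log t\,(1+o(1))$ in $d=2$. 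The comparison itself goes as follows.

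In $d=1$, a walk started at $y>R$ hits $B_R$ exactly when it hits $R$. Hence
\begin{equation*}
\Lambda_R(t)\le\sum_{|y|\le R}\nu_y+\sum_{y>R}\nu_y\pr_{y-R}(\tau_0\le t)+(\text{symmetric term}),
\end{equation*}
which is at most $(2R+1)\nu^*+\Lambda_0$ evaluated on the shifted profile. The shifted profile $\nu_{\cdot+R}$ has the same $\bar m$ asymptotics to leading order, provided $R=o(t^{1/2})$, because $\int_0^t m(s,\cdot)\,\dd s$ varies little over spatial shifts that are $o(\sqrt t)$. This last claim is the part that needs care. I would prove it by writing $\E^\xi[N_t]=\int_0^t m(s,0)\,\pr(\tau_0>t-s\text{ type renewal})$, i.e.\ via the renewal/last-exit identity $\int_0^t m(s,0)\,\dd s=\int_0^t \dd\Lambda_0(u)\,G_{t-u}(0,0)$, together with the heat-kernel ratio bound $p_s(y,x)/p_s(y,0)\to1$ uniformly for $|x|=o(\sqrt s)$.

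Choosing $R$ with $1\ll R\ll t^{1/2}$, and with $R$ also large enough that $t/R^2=o(t^{\alpha-1/2}L(t))$, is possible since $\alpha>1/2$. A concrete choice is $R=t^{1/2}/\log t$ when $\alpha=1$; in general take $R=t^{(3-2\alpha)/4}$ times a slowly varying correction, which is $o(t^{1/2})$ since $\alpha>1/2$. Then both error terms, $R\nu^*$ and $\kappa t/R^2$, are $o(t^{\alpha-\frac12}L(t))$. This yields the $d=1$ bound in \eqref{eqn:lowerbound}.

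In $d=2$ we take $R=t^{\epsilon}$ with small $\epsilon>0$, or $R=(\log t)^k$. We bound
\begin{equation*}
\pr_y(\tau_{B_R}\le t)\le\frac{\log t}{\log(t/R^2)}\,\pr_y(\tau_0\le t)(1+o(1))
\end{equation*}
uniformly, using the classical two-dimensional capacity asymptotics: the ratio of hitting probabilities of a ball of radius $R$ versus a point, by time $t$, behaves like $\log t/\log(t/R^2)$. With $R=t^\epsilon$ this ratio is $1/(1-2\epsilon)$. The remaining error terms are $\kappa t/R^2=o(t^\alpha L(t)/\log t)$ and $|B_R|\nu^*=O(t^{2\epsilon})$, which is also small when $\epsilon<\alpha/2$. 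Letting $\epsilon\downarrow0$ gives the factor $\pi\bar m$. The main obstacle is the uniform hitting-probability comparison together with the shift-stability of $\bar m$ under \ref{list:A3}. I would first try to bypass the latter by bounding $\pr_y(\tau_{B_R}\le t)\le\pr_y(\tau_0\le t+T_R)/\inf_{x\in B_R}\pr_x(\tau_0\le T_R)$ via the strong Markov property, with $T_R=R^2\log t$. This reduces everything to $\Lambda_0(t+T_R)$, whose asymptotics equal those of $\Lambda_0(t)$ by regular variation.
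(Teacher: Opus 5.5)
Your strategy matches the paper's: confine $X$ to $B_R$, require that no trap visit $B_R$ during $[0,t]$, and compare $\Lambda_R(t)$ with $\Lambda_0(t)=-\log Z^{\infty,0}_t$ from Proposition~\ref{lm:kappa0}. The gap is in how you handle the traps that start inside $B_R$, and it breaks the argument on part of the parameter range.

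In $d=1$ you bound their contribution by $(2R+1)\nu^*$ and assert that $R\nu^*$ and $\kappa t/R^2$ are both $o(t^{\alpha-\frac12}L(t))$. This needs $t^{(3-2\alpha)/4}\ll R\ll t^{\alpha-\frac12}$ up to slowly varying factors, which is impossible for $\alpha<5/6$. With your choice $R\approx t^{(3-2\alpha)/4}$ one gets $R\nu^*\gg t^{\alpha-\frac12}L(t)$ there. For $\alpha=1$, which is all Theorem~\ref{thm:annealed} needs, $R=t^{1/2}/\log t$ is fine. But the proposition is stated for every $\alpha\in(\frac12,1]$, and it is applied in Example~\ref{ex:A3holds} with $\alpha=1-\beta/2$, which is below $5/6$ once $\beta>1/3$. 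So this is a real gap.

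The missing idea is that \ref{list:A3} itself controls the initial mass near the origin. Lemma~\ref{lm:nuysum} gives $\sum_{y\in B_R}\nu_y\le\frac{C_1}{R}\int_0^{4R^2}m(s,0)\,\dd s=O(R^{2\alpha-1}L(R^2))$, which is $o(t^{\alpha-\frac12}L(t))$ for every $R=o(\sqrt t)$. Alternatively, apply your strong Markov bound to every $y$, including $y\in B_R$ where $\tau_{B_R}=0$. This gives $\Lambda_R(t)\le\Lambda_0(t+T)/\inf_{x\in B_R}\pr^Y_x(\tau_0\le T)$ with no separate $|B_R|\nu^*$ term. The paper uses this same comparison for the outside traps, with $T=\epsilon t$ and then $\epsilon\downarrow0$. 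So the shift-stability of $\bar m$ that you flag as delicate is never needed.

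In $d=2$ the same problem is sharper. Requiring $\kappa t/R^2=o(t^{\alpha}L(t)/\log t)$ forces $\log R\ge\frac{1-\alpha}{2}\log t\,(1+o(1))$. So for $\alpha<1$ you cannot let $\epsilon\downarrow0$ in $R=t^{\epsilon}$, and your ball-versus-point factor $\log t/\log(t/R^2)$ stays at least $1/\alpha$. As written, your argument proves the $d=2$ bound only for $\alpha=1$.

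Your bypass with $T_R=R^2\log t$ also fails in $d=2$. There $\pr^Y_x(\tau_0\le T)\approx1-\frac{2\log\lVert x\rVert}{\log T}$ for $1\ll\lVert x\rVert^2\le T$, which is far from $1$ when $T=R^2\log t$. You need $T$ of order $t$. The factor then tends to $1$ only if $\log R=o(\log t)$, or if you take $R=t^{\epsilon}$ and let $\epsilon\downarrow0$ at the end.

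For $\alpha=1$ that choice is correct, and it is better than the paper's. With the paper's $R_t=t^{1/(2\alpha+2)}$ one has $\inf_{z\in\partial B_{R_t}}\pr^Y_z(\tau_0\le\epsilon t)\to\alpha/(\alpha+1)$, not $1$. So the paper's $d=2$ step as written only yields the constant $\frac{\alpha+1}{\alpha}\pi\bar m$. For $d=2$, $\alpha<1$ and $\kappa>0$, this fixed-ball route, yours or the paper's, gives at best $\pi\bar m/\alpha$.
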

We are now ready to prove Theorem~\ref{thm:annealed}.

\begin{proof}[Proof of Theorem~\ref{thm:annealed}]
Under assumptions \ref{list:A1} and \ref{list:A2},  Lemma~\ref{lm:alpha1} implies that \ref{list:A3} holds with $\alpha = 1$ and $L(\cdot) \equiv 1$ for both $d=1,2$. Applying Proposition~\ref{prop:LB}  therefore yields the lower bound,
\begin{equation}
    \begin{split}
       \liminf_{t\to \infty}\frac{1}{t^{\frac{1}{2}}} \log Z^{\gamma}_t &\ge  - \bar{m} \sqrt{\frac{8}{\pi}}, \quad \text{if } d=1,\\
       \liminf_{t\to \infty}\frac{\log t}{t} \log Z^{\gamma}_t &\ge  - {\pi\,\bar{m}},\,\,\qquad \text{if } d=2.
        \label{eqn:lbfinal}
        \end{split}
    \end{equation}
On the other hand, Proposition~\ref{prop:pascal} together with
Proposition~\ref{lm:kappa0} gives the corresponding upper bounds,
\begin{equation}
    \begin{split}
       \limsup_{t\to \infty}\frac{1}{t^{\frac{1}{2}}} \log Z^{\gamma}_t &\le  - \bar{m} \sqrt{\frac{8}{\pi}}, \quad \text{if } d=1,\\
       \limsup_{t\to \infty}\frac{\log t}{t} \log Z^{\gamma}_t &\le  - {\pi\,\bar{m}},\,\,\qquad \text{if } d=2. 
        \label{eqn:ubfinal}
        \end{split}
    \end{equation}
   Combining \eqref{eqn:lbfinal} and \eqref{eqn:ubfinal}, as the upper and lower bounds coincide asymptotically as $t\to \infty$. This completes the proof of Theorem~\ref{thm:annealed}.
\end{proof}

In the next section, we prove the three propositions stated in this section.

\section{Proof of Proposition~\ref{prop:pascal}, Proposition~\ref{lm:kappa0},  and Proposition~\ref{prop:LB}}
\label{proofpropositions}
We first fix a realisation of a random walk path $X$ starting from the origin at time $0$ and introduce some notation to derive two integral identities (for the cases $\gamma \in (0,\infty)$ and $\gamma = \infty$, respectively) for the annealed survival probability conditioned on the trajectory $X$. These identities (see \eqref{eqn:Phitilderecursion} and \eqref{eqn:Phiforgammainfinity} below), obtained via the cumulant expansion method, will be used in the proofs of Proposition~\ref{lm:kappa0} and Proposition~\ref{prop:pascal}.

\textbf{Identity for $\mathbf{\gamma \in (0,\infty)}$:}  Fix $t\geq 0$ and $\gamma\in (0,\infty)$. Integrating out the Poisson field $\xi$ in the expression of annealed survival probability in \eqref{eqn:annealedSP}, we get
\begin{equation}
    Z^{\gamma, X}_t = \E^\xi\left[\exp\left\{-\gamma\int_0^t \xi(s,X(s))\, \dd s\right\}\right] = e^{-\Phi^{\gamma, X}(t)},
    \label{eqn:Xannealeddef}
\end{equation}
where $$\Phi^{\gamma, X}(t):=  \sum_{y\in \Z^d} \nu_y \left[1- \E^Y_y\left[\exp\left\{-\gamma\int_0^t \delta_0(Y(s)- X(s))\, \dd s\right\}\right]\right],$$ $\delta_0$ is the Dirac measure at $0$ and $\E^Y_y$ is the expectation with respect to a rate $1$ simple symmetric random walk on $\Z^d$.
Set 
\begin{equation}
    \phi^X_n(t):= \frac{1}{n!}\sum_{y\in \Z^d}\nu_y \E^Y_y\left[\left(\int_0^t \delta_0(Y(s)-X(s))\, \dd s\right)^n\right],
    \label{eqn:Xsmallphindefn}
\end{equation}
for $n\ge 1$. Proceeding as in \cite[Page 313-314]{S1964} and \cite{CG1984}, we can rewrite $\phi^X_n(t)$ as 
{\small$$\phi^X_n(t) = \sum_{y\in\Z^d}\nu_y \int_0^t \dd s_n \int_0^{s_n} \dd s_{n-1} \cdots \int_0^{s_2} \dd s_1 \, p_{s_1}(y, X(s_1))\,p_{s_2-s_1}(X(s_1),X(s_2))\cdots p_{s_n - s_{n-1}}(X(s_{n-1}),X(s_n)),$$}
where $p_t(x,y)$ is the transition kernel of a continuous-time simple symmetric random walk with unit jump rate. Using Fubini's theorem and the definition of $m(t,x)$ from \eqref{eqn:poissonmean}, we can interchange sum and integral to obtain
\begin{equation}
    \phi^X_n(t) = \int_0^t \dd s_n \int_0^{s_n} \dd s_{n-1} \cdots \int_0^{s_2} \dd s_1 \, m(s_1,X(s_1)) \,p_{s_2-s_1}(X(s_1),X(s_2))\cdots p_{s_n - s_{n-1}}(X(s_{n-1}),X(s_n)).
    \label{eqn:Xpsintexpansion}
\end{equation}
From \ref{list:A1} and \eqref{eqn:Xpsintexpansion}, it is easy to see that $0\le \phi^X_n(t) \le \nu^* \frac{t^n}{n!}$, \begin{equation}
        \Phi^{\gamma, X}(t) = -\sum_{n=1}^\infty \phi^X_n(t) (-\gamma)^n
         \label{eqn:XPsiseries}
\end{equation}
 and for each fixed $t\ge 0$, the map  $\gamma \mapsto \Phi^{\gamma, X}(t)$ is real analytic on $(0,\infty).$
Observe that for $n\ge 1$,
\begin{equation}
    \phi^X_n(t) = \int_{0}^t \dd s\,\tilde{\phi}^X_{n-1}(s),
    \label{eqn:psiandphirelation}
\end{equation}
where 
\begin{align}
    \tilde{\phi}^X_0(t) &= m(t,X(t))\quad \text{and }\nonumber \\
    \tilde{\phi}^X_n(t) &= \int_0^{t} \dd s_{n-1} \cdots \int_0^{s_2} \dd s_1 \, m(s_1,X(s_1)) \,p_{s_2-s_1}(X(s_1),X(s_2))\cdots p_{t - s_{n-1}}(X(s_{n-1}),X(t))
    \label{eqn:phitilde}
\end{align}
Define
\begin{equation}
  \tilde{\Phi}^{\gamma, X}(t) := \gamma\sum_{n=0}^\infty \tilde{\phi}^X_n(t)(-\gamma)^n
    \label{eqn:XPhiexpansion}
\end{equation}
Using \eqref{eqn:psiandphirelation} in \eqref{eqn:XPsiseries}, we get 
\begin{equation}
    \Phi^{\gamma, X}(t) = \int_0^t \tilde{\Phi}^{\gamma, X}(s)\, \dd s.
    \label{eqn:XPsiasintegral}
\end{equation}
Also note that ,
\begin{equation}
\tilde{\phi}^X_n(t) = \int_{0}^t \dd s\, p_{t-s}(X(s), X(t))\, \tilde{\phi}^X_{n-1}(s)\,\, \text{for } n\ge 1.
\label{eqn:tildephirec}
\end{equation}
Using this in \eqref{eqn:XPhiexpansion}, we have the following integral recursion,
\begin{equation}
   \tilde{\Phi}^{\gamma, X}(t) = \gamma \,m(t,X(t)) - \gamma \int_{0}^t \dd s\, p_{t-s}(X(s),X(t))\,\tilde{\Phi}^{\gamma, X}(s).
    \label{eqn:Phitilderecursion}
\end{equation}
\textbf{Identity for $\mathbf{\gamma = \infty}$ :} Fix $t\geq 0$. The annealed survival probability is given by 
\begin{equation}
    \begin{split}
        Z^{\infty, X}_t &= \pr^\xi\left(\xi(s,X(s)) = 0 \text{ for all } s\in [0,t]\right)\\
        &= \exp\left\{-\sum_{y\in \Z^d} \nu_y \pr^Y_y(\tau_X \le t)\right\}\\
        &=: \exp\left\{-\nu_0 - \Phi^{\infty, X}(t)\right\}
    \end{split}
\label{eqn:Xgammainfinityannealesdefn}
\end{equation}
where $\tau_X := \inf\{s\ge 0 : Y(s) = X(s)\}$ and $\Phi^{\infty, X}(t):= \sum_{y\in \Z^d\setminus \{0\}} \nu_y \pr^Y_y(\tau_X \le t)$. For $y\neq 0$ and for some $0\le s\le t$, we can write and  
\begin{align*}
    \pr^Y_y(Y(t)=X(t)) &= \pr^Y_y(\tau_X\le t, \, Y(t) = X(t))\nonumber\\
    &= \int_0^t \pr^Y_y(\tau_X \in \dd s,\, Y(t) = X(t)) \nonumber\\
    &=  \int_0^t p_{t-s}( X(s),X(t))\pr^Y_y(\tau_X \in \dd s) \quad \text{(Strong Markov property)}.
    \label{eqn:Xstrongmarkovproperty}
\end{align*}
Multiplying both sides by $\nu_y$, summing over all $y\in \Z^d\setminus\{0\}$ and using Fubini's Theorem, we get
\begin{equation}
    m(t,X(t)) = \nu_0 p_{t}(0,X(t))+  \int_{0}^t p_{t-s}(X(s),X(t))\, \tilde{\Phi}^{\infty, X}(\dd s)
    \label{eqn:Phiforgammainfinity} 
\end{equation}
where $\tilde{\Phi}^{\infty, X}(\dd t):=\sum_{y\in\Z^d\setminus \{0\}}\nu_y \pr^Y_y(\tau_X \in \dd t)$. Note that by definition, $\Phi^{\infty, X}(t) = \int_0^t \tilde{\Phi}^{\infty, X}(\dd s)$.

\subsection{Proof of Proposition~\ref{prop:pascal}}
We shall prove Proposition~\ref{prop:pascal}, by showing that the difference $\Phi^{\gamma,X}(t)- \Phi^{\gamma,0}(t)$ is non-negative for $\gamma \in (0,\infty]$. 
 We will prove the result assuming the following lemma. The proof of the lemma uses the identity \eqref{eqn:Phitilderecursion} and is provided at the end of this subsection.
\begin{lemma}
Assume \ref{list:A2}. 
\begin{enumerate}
\item[(a)] Let $\gamma\in (0,\infty)$ and $F: [0,\infty) \to \R$ be given by $F(t)= {\Phi}^{\gamma, X}(t)-{\Phi}^{\gamma, 0}(t)$. Then, $F$ is continuous and 
\begin{equation} 
 F(u) \ge \int_0^u \dd t \,\gamma e^{-\gamma(u- t)} \int_0^t \dd s\, F(s) (p_{t-s}(0,0)- p_{t-s}(0, e_1)) \quad \text{ for all $u\geq 0$,}
 \label{eqn:FTineq}
 \end{equation}
\item[(b)] Let $\gamma = \infty$ and $G: [0,\infty) \to \R$ be given by $G(t)= {\Phi}^{\infty, X}(t)-{\Phi}^{\infty, 0}(t)$. Then, $G$ is continuous and 
\begin{equation} 
   G(t) \ge  \int_0^t \dd s\, G(s) (p_{t-s}(0,0)- p_{t-s}(0, e_1))\quad \text{for all }t\geq 0,
    \label{eqn:gtgammainfinite}
 \end{equation}
 \end{enumerate}
 where $e_1 = 1$ in $d=1$ and $e_1$ is the unit vector in $d\ge 2$.
 \label{clm:gammafinite}
\end{lemma}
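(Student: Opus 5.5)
The plan is to subtract the identities \eqref{eqn:Phitilderecursion} and \eqref{eqn:Phiforgammainfinity} written for the path $X$ and for $X\equiv 0$. Then I use three monotonicity facts, all of which drop only non-negative terms: \ref{list:A2}; the heat-kernel bound $p_r(x,y)\le p_r(0,0)$; and, for part (a), $\tilde\Phi^{\gamma,X}\ge 0$. Finally one integration by parts in $s$ produces the kernel $p_{t-s}(0,0)-p_{t-s}(0,e_1)$, via the backward equation. For the rate-$1$ simple random walk this reads
\[
\frac{\dd}{\dd r}p_r(0,0)=\frac{1}{2d}\sum_{|e|=1}\bigl(p_r(0,e)-p_r(0,0)\bigr)=p_r(0,e_1)-p_r(0,0),
\]
where the second equality uses symmetry.

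\textbf{Part (a).} By \eqref{eqn:XPsiasintegral}, $\Phi^{\gamma,X}$ is $C^1$ with derivative $\tilde\Phi^{\gamma,X}$, so $F$ is continuous with $F(0)=0$. Moreover $\tilde\Phi^{\gamma,X}\ge 0$, since $t\mapsto Z^{\gamma,X}_t=e^{-\Phi^{\gamma,X}(t)}$ is non-increasing. I rewrite \eqref{eqn:Phitilderecursion} as the linear ODE
\[
\Phi'(t)+\gamma\Phi(t)=\gamma\Bigl[m(t,X(t))+\int_0^t \Phi'(s)\bigl(1-p_{t-s}(X(s),X(t))\bigr)\dd s\Bigr],\qquad \Phi(0)=0,
\]
where $\Phi=\Phi^{\gamma,X}$. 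This uses $\Phi(t)=\int_0^t\Phi'(s)\,\dd s$. Solving by variation of constants and subtracting the same identity for $X\equiv0$ gives
\[
F(u)=\int_0^u \gamma e^{-\gamma(u-t)}\Bigl[m(t,X(t))-m(t,0)+\int_0^t\Bigl(\tilde\Phi^{\gamma,X}(s)\bigl(1-p_{t-s}(X(s),X(t))\bigr)-\tilde\Phi^{\gamma,0}(s)\bigl(1-p_{t-s}(0,0)\bigr)\Bigr)\dd s\Bigr]\dd t .
\]
By \ref{list:A2}, the first difference $m(t,X(t))-m(t,0)$ is non-negative. For the inner integrand I use $p_{t-s}(X(s),X(t))\le p_{t-s}(0,0)$ together with $\tilde\Phi^{\gamma,X}\ge0$. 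This bounds the inner integral below by $\int_0^t F'(s)(1-p_{t-s}(0,0))\,\dd s$.

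I then integrate by parts. The boundary terms vanish, because $F(0)=0$ and $1-p_0(0,0)=0$. By the backward equation, $\frac{\dd}{\dd s}\bigl(1-p_{t-s}(0,0)\bigr)=p_{t-s}(0,e_1)-p_{t-s}(0,0)$. Hence the inner integral equals $\int_0^t F(s)\bigl(p_{t-s}(0,0)-p_{t-s}(0,e_1)\bigr)\dd s$, which is exactly \eqref{eqn:FTineq}. Since $\gamma e^{-\gamma(u-t)}>0$, the inequalities survive the outer integration.

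\textbf{Part (b).} Subtracting \eqref{eqn:Phiforgammainfinity} for $X$ and for $X\equiv0$ gives
\[
0\le m(t,X(t))-m(t,0)=\nu_0\bigl(p_t(0,X(t))-p_t(0,0)\bigr)+\int_0^t p_{t-s}(X(s),X(t))\,\dd\Phi^{\infty,X}(s)-\int_0^t p_{t-s}(0,0)\,\dd\Phi^{\infty,0}(s),
\]
where the left inequality is \ref{list:A2}. The $\nu_0$-term is $\le 0$. Because $\dd\Phi^{\infty,X}$ is a non-negative measure, the second term is at most $\int_0^t p_{t-s}(0,0)\,\dd\Phi^{\infty,X}(s)$. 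Together these give $\int_0^t p_{t-s}(0,0)\,\dd G(s)\ge 0$.

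A Stieltjes integration by parts, using $G(0)=0$, $p_0(0,0)=1$ and the backward equation, gives
\[
\int_0^t p_{t-s}(0,0)\,\dd G(s)=G(t)-\int_0^t G(s)\bigl(p_{t-s}(0,0)-p_{t-s}(0,e_1)\bigr)\dd s.
\]
This yields \eqref{eqn:gtgammainfinite}.

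\textbf{Main obstacle.} The main technical point is continuity of $G$, which is needed for the integration by parts and for how the lemma is used afterwards. For $y\ne0$, the hitting time $\tau_X$ of a piecewise constant path by a continuous-time walk has an atomless law, because jumps occur at exponential times and $X$ has only finitely many discontinuities. Then $\Phi^{\infty,X}(t)=\sum_{y\ne0}\nu_y\pr_y^Y(\tau_X\le t)$ is continuous by dominated convergence. The domination comes from $\nu_y\le\nu^*$ together with the standard estimate that $\sum_y\pr_y(\tau_X\le t)$ is finite, which holds since the walk has finite expected range up to time $t$. A second, minor point is checking the heat-kernel comparison $p_r(x,y)\le p_r(0,0)$. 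This follows from Cauchy--Schwarz and symmetry: $p_r(x,y)=\sum_z p_{r/2}(x,z)p_{r/2}(z,y)\le p_r(0,0)$.
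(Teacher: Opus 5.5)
Your derivation of \eqref{eqn:FTineq} and of the inequality \eqref{eqn:gtgammainfinite} is correct and uses the same ingredients as the paper, in a different order. The paper first proves the pointwise bound $F+\gamma^{-1}F'\ge\int_0^t F(s)\bigl(p_{t-s}(0,0)-p_{t-s}(0,e_1)\bigr)\dd s$ and then integrates against $\gamma e^{\gamma t}$. You do the variation of constants first and then apply \ref{list:A2}, $p_r(x,y)\le p_r(0,0)$ and $\tilde\Phi^{\gamma,X}\ge 0$ under the positive kernel. One minor inaccuracy: $\Phi^{\gamma,X}$ is not $C^1$, because $\tilde\Phi^{\gamma,X}$ jumps when $X$ does (already $\tilde\phi^X_0(t)=m(t,X(t))$ does). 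It is, however, Lipschitz, and that is all your ODE and integration-by-parts steps use.

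The step that fails is your argument for continuity of $G$. It is false that $\tau_X$ has an atomless law for $y\neq 0$. Suppose $X$ jumps at the deterministic time $t_1$ from $0$ to $e_1$, and $Y$ starts at $e_1$ and makes no jump in $[0,t_1]$. Then $Y(s)\neq X(s)$ for $s<t_1$ and $Y(t_1)=X(t_1)$, so $\pr^Y_{e_1}(\tau_X=t_1)\ge e^{-t_1}>0$. Physically, with $\gamma=\infty$ the walker is killed at exactly $t_1$ with positive probability when it jumps onto an occupied site. Hence $\Phi^{\infty,X}$ jumps at $t_1$ by $\sum_{y\neq0}\nu_y\pr^Y_y(\tau_X=t_1)\ge \nu_{e_1}e^{-t_1}$. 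Meanwhile $\Phi^{\infty,0}$ is continuous, since for $X\equiv 0$ the hitting time is a jump time of $Y$. So $G$ is discontinuous in general, and the continuity assertion in (b) cannot be proved. The paper's one-line justification $\tilde\Phi^{\infty,X}(\{t\})=0$ fails for the same reason, so this is a defect of the statement, not an idea you missed.

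What is true, and suffices, is the following: $G$ is right-continuous, of bounded variation on compacts (your finite-expected-range bound makes both monotone pieces finite), with $G(0)=0$ and only upward jumps. Lebesgue--Stieltjes integration by parts on $[0,t]$ against the $C^1$ function $s\mapsto p_{t-s}(0,0)$ needs nothing more. So your derivation of \eqref{eqn:gtgammainfinite} stands for every $t$, reading $\int_0^t$ as $\int_{[0,t]}$.

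Downstream, in the proof of Proposition~\ref{prop:pascal}, replace the minimiser $t_1$ by $M:=\inf_{(t_0,t_0+1)}G<0$. For each $t\in(t_0,t_0+1)$ the inequality gives $G(t)\ge M\bigl(1-p_{t-t_0}(0,0)\bigr)\ge M\bigl(1-p_1(0,0)\bigr)$. Taking the infimum yields $M\ge M\bigl(1-p_1(0,0)\bigr)$, a contradiction. You should therefore replace the atomless claim by this weaker regularity statement.
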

\begin{proof}[Proof of Proposition~\ref{prop:pascal}]
Fix a $\gamma\in (0,\infty)$. Let $F$ be as in Lemma~\ref{clm:gammafinite}. To prove Proposition~\ref{prop:pascal} for $\gamma\in (0,\infty)$, it is enough to show that $F(t) \ge 0$ for all $t\ge 0$. Suppose $t_0 := \inf\{t\ge 0: F(t) <0 \}$, then $F(t) \ge 0$ for all $t< t_0$. The inequality implies $F(t_0)\ge0$. Since $F(t)$ is continuous, we get $F(t_0) = 0$ and for every $\delta>0$, there exists $t\in (t_0,t_0+\delta)$ such that $F(t)<0$. Fix $\delta = 1$, let $t_{1}\in (t_0, t_0+1)$ be point where $F(t)$ is minimum in the interval $(t_0, t_0+1)$, then $F(t_1)<0$. Also, $F(s)\ge F(t_1)$ for all $s\in (t_0,t_0+1)$. We have from \eqref{eqn:FTineq},
{\small \begin{align*}
    F(t_1)&\ge \int_0^{t_0} \dd t \,\gamma e^{-\gamma(t_1- t)} \int_0^t \dd s\, F(s) (p_{t-s}(0,0)- p_{t-s}(0, e_1)) \\
    &\quad \quad + \int_{t_0}^{t_1}\dd t \,\gamma e^{-\gamma(t_1- t)} \int_0^t \dd s\, F(s) (p_{t-s}(0,0)- p_{t-s}(0, e_1))\\
    &\ge \int_{t_0}^{t_1}\dd t\, \gamma e^{-\gamma(t_1- t)} \left(\int_0^{t_0} \dd s\, F(s) (p_{t-s}(0,0)- p_{t-s}(0, e_1))+\int_{t_0}^{t} \dd s\, F(s) (p_{t-s}(0,0)- p_{t-s}(0, e_1))\right) \\
    &\ge\int_{t_0}^{t_1}\dd t\, \gamma e^{-\gamma(t_1- t)} \int_{t_0}^t \dd s\, F(s) (p_{t-s}(0,0)- p_{t-s}(0, e_1))\\
    &\ge F(t_{1}) \int_{t_0}^{t_1}\dd t \,\gamma e^{-\gamma(t_1- t)} \int_{t_0}^t \dd s\,  (p_{t-s}(0,0)- p_{t-s}(0, e_1)).
\end{align*}}
Since $F(t_1)<0$, this implies 
$$\int_{t_0}^{t_1}\dd t \,\gamma e^{-\gamma(t_1- t)} \int_{t_0}^t \dd s\,  (p_{t-s}(0,0)- p_{t-s}(0, e_1))\ge 1.$$
But using the fact that $p_t$ satisfies the heat equation and is the transition probability for a simple symmetric random walk, we have
\begin{align*}
  \int_{t_0}^{t_1}\dd t \,\gamma e^{-\gamma(t_1- t)} \int_{t_0}^t \dd s\,  (p_{t-s}(0,0)- p_{t-s}(0, e_1))&= \int_{t_0}^{t_1}\dd t \,\gamma e^{-\gamma(t_1- t)} \int_{t_0}^{t} \dd s\, \frac{d}{ds}p_{t-s}(0,0)\\
  &{= \int_{t_0}^{t_1}\dd t\, \gamma e^{-\gamma(t_1- t)} (1- p_{t - t_{0}}(0,0))}\\
  & \le (1-p_1(0,0))(1-e^{-\gamma}) < 1,
\end{align*}
which is a contradiction.
Therefore $F(t)\ge 0$ for all $t\ge 0$, which proves  Proposition~\ref{prop:pascal} for $\gamma\in(0,\infty)$.

Fix $\gamma = \infty$. 
Let $G$ be as in Lemma~\ref{clm:gammafinite}. We need to show, $G(t)\ge0$ for all $t\ge 0$. As $G(0)=0$, let $t_0 := \inf\{t\ge 0: G(t) <0 \}$, then $G(t) \ge 0$ for all $t< t_0$. The inequality implies $G(t_0)\ge0$. Since $G(t)$ is continuous, we get $G(t_0) = 0$ and for every $\delta>0$, there exists $t\in (t_0,t_0+\delta)$ such that $G(t)<0$. Fix $\delta =1$, let $t_{1}\in (t_0, t_0+1)$ be point where $G(t)$ is minimum in the interval $(t_0, t_0+1)$, then $G(t_{1})<0$. Also, $G(s)\ge G(t_1)$ for all $s\in (t_0,t_0+1)$. We have 
\begin{align*}
    G(t_1)&\ge {\int_0^{t_0}\dd s\, G(s)  (p_{t_{1}-s}(0,0)- p_{t_{1}-s}(0, e_1)) +} \int_{t_0}^{t_1}\dd s\, G(s)  (p_{t_1-s}(0,0)- p_{t_1-s}(0, e_1)),\\
    &\ge  G(t_1)\int_{t_0}^{t_1}\dd s\,  (p_{t_1-s}(0,0)- p_{t_1-s}(0, e_1)),\\
    &\ge {G(t_1) \int_{t_0}^{t_1} \frac{d}{ds}p_{t_1-s}(0,0)\, \dd s = G(t_1) (1-p_{t_1-t_0}(0,0))} \ge G(t_1) (1-p_{1}(0,0)),
\end{align*}
where we use the fact that $p_t$ satisfies the heat equation and is the transition probability for a simple symmetric random walk.
{Since $G(t_1)<0$, this implies 
$(1-p_1(0,0))\ge 1$
 which is a contradiction.}
Therefore $G(t)\ge 0$ for all $t\ge 0$ which proves Proposition~\ref{prop:pascal}.
\end{proof}
To complete the proof of Proposition~\ref{prop:pascal}, we prove Lemma~\ref{clm:gammafinite}.

\begin{proof}[Proof of Lemma~\ref{clm:gammafinite}(a)]
For a given random walk path $X$, from \eqref{eqn:Xannealeddef} and \eqref{eqn:XPsiasintegral}, we can write 
$$Z^{\gamma, X}_t = \exp\{-\Phi^{\gamma,X}(t)\} =  \exp\left\{-\int_{0}^t \tilde{\Phi}^{\gamma,X}(s)\, \dd s\right\}.$$
From \eqref{eqn:phitilde}, observe that
{$$\tilde{\phi}^{X}_n(t) = \frac{1}{n!}\sum_{y\in \Z^d}\nu_y \E^Y_y\left[ \1_{\{Y(t)=X(t)\}}\left(\int_0^t \delta_0(Y(s)-X(s))\, \dd s\right)^n\right]$$}
and hence, using \eqref{eqn:XPhiexpansion},
$$ \tilde{\Phi}^{\gamma,X}(t) = \gamma \sum_{y\in \Z^d}\nu_y \E^Y_y\left[\1_{\{Y(t)=X(t)\}} \exp\left\{-\gamma \int_0^t \delta_0(Y(s)-X(s))\,\dd s\right\}\right].$$
Therefore $\tilde{\Phi}^{\gamma,X}(t) \ge 0$ for all $t\ge 0$. Using the Fourier representation and the fact that $p_t$ is the transition kernel of a simple symmetric random walk, we observe that for any $x, y\in\Z^d$, $p_{t}(x,y) \le p_t(0,0)$ and from \eqref{eqn:Phitilderecursion} we have,
\begin{equation}
    \gamma \, m(t,X(t)) \le \tilde{\Phi}^{\gamma, X}(t)+ \gamma \int_0^t\dd s \, p_{t-s}(0,0)\,\tilde{\Phi}^{\gamma, X}(s).
    \label{eqn:Xtildephiineq}
\end{equation}
Similarly for $X\equiv 0$ (i.e., when the random walker is immobile),
\begin{equation}
    \gamma \, m(t, 0) = \tilde{\Phi}^{\gamma, 0}(t)+\gamma \int_0^t \dd s\, p_{t-s}(0,0) \tilde{\Phi}^{\gamma, 0}(s).
    \label{eqn:0tildephiineq}
\end{equation}
Using the condition from \ref{list:A2}, $ m(t, 0) \le  m(t, x)$ for all $t\ge 0, x\in\Z^d$ in \eqref{eqn:0tildephiineq} and \eqref{eqn:Xtildephiineq}, we get
\begin{equation} 
\tilde{\Phi}^{\gamma, X}(t)-\tilde{\Phi}^{\gamma, 0}(t) \ge -\gamma \int_0^t \dd s\, p_{t-s}(0,0) \left[\tilde{\Phi}^{\gamma, X}(s)-\tilde{\Phi}^{\gamma, 0}(s)\right].
\label{eqn:pasineq}
\end{equation}
Note that $\Phi^{\gamma, X}(t)$ is absolutely continuous and hence $\tilde{\Phi}^{\gamma, X}(t) = \frac{d}{dt}\Phi^{\gamma, X}(t)$ almost everywhere. 
Using integration by parts in \eqref{eqn:pasineq}, we get 
\begin{equation}
    \left(\tilde{\Phi}^{\gamma, X}(t)-\tilde{\Phi}^{\gamma, 0}(t) \right)\ge  - \gamma \left({\Phi}^{\gamma, X}(t)-{\Phi}^{\gamma, 0}(t)\right) + \gamma \int_0^t \dd s\, \left({\Phi}^{\gamma, X}(s)-{\Phi}^{\gamma, 0}(s)\right) \, \frac{d }{ds}p_{t-s}(0,0).
    \label{eqn:pascalinter}
\end{equation}
Let $e_j, j = 1,\ldots,d$ denote the unit vectors in $\Z^d$. Note that $p_t(0,0)$ satisfies the partial differential equation, $\frac{\partial}{\partial t}p_t(0,0) = \Delta p_t(0,0) = \frac{1}{2d} \sum_{y\sim 0}(p_t(0,y)-p_t(0,0))$. This along with the fact that $p_t(0,\pm e_j) = p_t(0,e_1)$ for all $j = 1,\ldots, d$, we can rewrite \eqref{eqn:pascalinter} as
$$\left({\Phi}^{\gamma, X}(t)-{\Phi}^{\gamma, 0}(t)\right)+  \frac{1}{\gamma}\left(\tilde{\Phi}^{\gamma, X}(t)-\tilde{\Phi}^{\gamma, 0}(t) \right) \ge  \int_0^t \dd s\, \left({\Phi}^{\gamma, X}(s)-{\Phi}^{\gamma, 0}(s)\right)\left(p_{t-s}(0,0)- p_{t-s}(0, e_1)\right), $$
Denote $F(t) = {\Phi}^{\gamma, X}(t)-{\Phi}^{\gamma, 0}(t)$ and $f(t) = \tilde{\Phi}^{\gamma, X}(t)-\tilde{\Phi}^{\gamma, 0}(t)$. The above inequality can be rewritten in terms of $F(t), f(t)$ as 
\begin{equation}
    F(t) + \frac{1}{\gamma}f(t) \ge \int_0^t \dd s\, F(s) (p_{t-s}(0,0)- p_{t-s}(0, e_1))
    \label{eqn:ftgammafinite}
\end{equation}
 and $F(0)=0 = f(0)$. Suppose $t_0 := \inf\{t\ge 0: F(t) <0 \}$, then $F(t) \ge 0$ for all $t< t_0$. Multiply \eqref{eqn:ftgammafinite} throughout by $\gamma e^{\gamma t}$ and integrate over $t$ from $0$ to some $u>0$, to get
 $$\int_0^{u} [\gamma e^{\gamma t}F(t) + e^{\gamma t}f(t)]\, \dd t\ge \int_0^u \dd t \,\gamma e^{\gamma t} \int_0^t \dd s\, F(s) (p_{t-s}(0,0)- p_{t-s}(0, e_1)).$$
 Since $f(t)$ is derivative of $F(t)$ almost everywhere, we have \eqref{eqn:FTineq}.
\end{proof}

\begin{proof}[Proof of Lemma~\ref{clm:gammafinite}(b)]
 The annealed survival probability conditioned on random walk trajectory $X$ for $\gamma=\infty$ is given by 
\begin{equation}
    \begin{split}
        Z^{\infty, X}_t = \exp\left\{-\nu_0-\Phi^{\infty, X}(t)\right\}=\exp\left\{-\nu_0 - \int_0^t\tilde{\Phi}^{\infty, X}( \dd s) \right\}.
    \end{split}
\end{equation}
Note that $\tilde{\Phi}^{\infty, X}(\{t\}) = 0$ for all $t\ge 0$ and hence $\Phi^{\infty, X}$ is continuous.  Using $p_t(x,y)\le p_t(0,0)$ for all $x,y\in\Z^d$ in \eqref{eqn:Phiforgammainfinity}, we obtain 
\begin{equation}
    m(t,X(t)) \le \nu_0 p_{t}(0,0)+  \int_{0}^t p_{t-s}(0,0)\, \tilde{\Phi}^{\infty, X}(\dd s).
    \label{eqn:Xgammainfineq}
\end{equation}
 Similarly for $X\equiv 0$,
 \begin{equation}
     m(t,X(t)) = \nu_0 p_{t}(0,0)+  \int_{0}^t p_{t-s}(0,0)\, \tilde{\Phi}^{\infty, X}(\dd s).
     \label{eqn:0gammainfineq}
 \end{equation}
Using \eqref{eqn:pascalcond} from \ref{list:A2}, $ m(t, 0) \le  m(t, x)$ for all $t\ge 0, x\in \Z^d$ in \eqref{eqn:Xgammainfineq} and \eqref{eqn:0gammainfineq}, we get
$$\int_0^t p_{t-s}(0,0) \,\tilde{\Phi}^{\infty, X}(\dd s) -\int_0^t p_{t-s}(0,0) \,\tilde{\Phi}^{\infty, 0}(\dd s) \ge 0.$$
Using the fact that $\Phi^{\gamma, X}(t) = \int_0^t \tilde{\Phi}^{\gamma, X}(\dd s)$ and integration by parts, we get
$${\Phi}^{\infty, X}(t)-{\Phi}^{\gamma, 0}(t)\ge   \int_0^t \dd s\, \left({\Phi}^{\infty, X}(s)-{\Phi}^{\infty, 0}(s)\right)\left(p_{t-s}(0,0)- p_{t-s}(0, e_1)\right). $$
Denote $G(t) = {\Phi}^{\infty, X}(t)-{\Phi}^{\infty, 0}(t)$. The above inequality can be rewritten in terms of $G(t)$ to obtain \eqref{eqn:gtgammainfinite}.
\end{proof}

\subsection{Proof of Proposition~\ref{lm:kappa0}}
The proof of Proposition~\ref{lm:kappa0} is based on taking Laplace transforms of the identities \eqref{eqn:Phitilderecursion} and \eqref{eqn:Phiforgammainfinity} with $X\equiv 0$ and analysing their behaviour near zero. An application of the local central limit theorem and a Tauberian theorem are then used to obtain the required asymptotics of the relevant transforms and yield the large-time asymptotics of $Z^{\gamma, 0}_t$.

\begin{proof}[Proof of Proposition~\ref{lm:kappa0}]
We first consider $\gamma \in (0,\infty)$. For $\kappa= 0 $ or $X\equiv 0$, using \eqref{eqn:Xannealeddef}, \eqref{eqn:XPsiasintegral}, we have
$$Z^{\gamma, 0}_t = \exp\{-\Phi^{\gamma,0}(t)\} =  \exp\left\{-\int_{0}^t \tilde{\Phi}^{\gamma,0}(s)\, \dd s\right\}$$
and the integral identity \eqref{eqn:Phitilderecursion} reduces to,
\begin{equation}
    \tilde{\Phi}^{\gamma, 0}(t) = \gamma \,m(t,0) - \gamma \int_{0}^t \dd s\, p_{t-s}(0,0)\,\tilde{\Phi}^{\gamma, 0}(s).
\label{eqn:Phirecursionforkappa0}
\end{equation}
Denote the Laplace transform of $\tilde{\Phi}^{\gamma, 0}(t), \,p_{t}(0,0) $ and $m(t,0)$ by 
\begin{equation}
    \hat{\tilde{\Phi}}^{\gamma, 0}(\lambda)= \int_0^\infty  e^{-\lambda t} \tilde{\Phi}^{\gamma,0}(t)\,\dd t\,,\quad \hat{p}(\lambda) = \int_0^\infty e^{-\lambda t} p_t(0,0)\, \dd t\,, \quad \hat{m}(\lambda) = \int_0^t  e^{-\lambda t} m(t,0)\, \dd t,
\label{eqn:Laplacekappazero}
\end{equation}
for $\lambda>0$. Taking Laplace transforms in \eqref{eqn:Phirecursionforkappa0} gives
\begin{equation}
    \hat{\tilde{\Phi}}^{\gamma,0}(\lambda) = \frac{\gamma\hat{m}(\lambda)}{1+{\gamma}\hat{p}(\lambda)}.
    \label{eqn:PhiHatexpression}
\end{equation}
We can apply the local limit theorem for the continuous-time simple symmetric random walk in dimensions $d=1,2$ (i.e., $p_t(0,0) = \left(\frac{d}{2\pi t}\right)^{d/2}(1+o(1))$ as $t\to \infty$, see \cite[Chapter~II.7, Proposition~9]{S1964}) to obtain the following asymptotics for $\hat{p}(\lambda)$ as $\lambda \downarrow 0$,
\begin{equation}
    \hat{p}(\lambda) = \begin{cases}
   \,\frac{1}{\sqrt{2\lambda}}(1+o(1)), & \text{if } d=1,\\
  \, \frac{1}{\pi}\log \left(\frac{1}{\lambda}\right)(1+o(1)), & \text{if } d=2,\\
   \,G_d(0)(1+o(1)), & \text{if } d\ge 3,
   \end{cases}
   \label{eqn:phatlambda}
\end{equation}
where $G_d(0) = \int_0^\infty p_t(0,0) \,\dd  t$. Using \ref{list:A3} and Tauberian theorem (see \cite[Chapter~XIII.5, Theorem~2]{F1971}), we have that as $\lambda \downarrow 0$,   
\begin{equation}
\hat{m}(\lambda) = \frac{\bar{m}{\Gamma(\alpha + 1)}}{\lambda^{\alpha}}L(1/\lambda)(1+o(1)).
\label{eqn:mhatlambda}
\end{equation}
  Thus, from \eqref{eqn:phatlambda} and \eqref{eqn:mhatlambda}, we get the following asymptotics for $\hat{\tilde{\Phi}}^{\gamma, 0}(\lambda)$,
$$\hat{\tilde{\Phi}}^{\gamma, 0}(\lambda) = \begin{cases}
     \, \bar{m}{\Gamma(\alpha + 1)}\frac{\sqrt{2}}{\lambda^{(\alpha -\frac{1}{2})}}L(1/\lambda)(1+o(1)), & \text{if } d=1,\\
   \,  \bar{m}{\Gamma(\alpha + 1)}\frac{\pi}{\lambda^{\alpha}\log\left(\frac{1}{\lambda}\right)}L(1/\lambda)(1+o(1)), & \text{if } d=2,\\
    \, \bar{m}{\Gamma(\alpha + 1)} \frac{\gamma}{1 + \gamma G_d(0)}\cdot \frac{1}{\lambda^{\alpha}}L(1/\lambda)(1+o(1)), & \text{if } d\ge 3.
     \end{cases}$$
Again using Tauberian theorem \cite[Chapter~XIII.5, Theorem~2]{F1971}, we have the following asymptotics for $\Phi^{\gamma,0}(t)$ as $t\to \infty$,
$$\Phi^{\gamma,0}(t) = \begin{cases}
         \bar{m}\sqrt{2} \frac{\Gamma(\alpha+1)}{\Gamma(\alpha+\frac{1}{2}
        )}t^{(\alpha-\frac{1}{2})}L(t)(1+o(1)),& \text{if } d=1,\\
         {\pi \bar{m}} \frac{t^{\alpha}}{\log t}L(t)(1+o(1)), & \text{if } d=2,\\
         {\bar{m}} \frac{\gamma}{1 + \gamma G_d(0)}t^{\alpha}L(t)(1+o(1)), & \text{if } d\ge 3.
         \end{cases}$$
which proves Proposition~\ref{lm:kappa0} for $\gamma \in (0,\infty)$.

We now consider the case $
\gamma = \infty$. For $X\equiv 0$ or $\kappa =0$, by \eqref{eqn:Xgammainfinityannealesdefn}, we have
$$
     Z^{\infty, 0}_t = \exp\{-\nu_0 - \Phi^{\infty,0}(t)\} 
        = \exp\left\{-\nu_0-\int_0^t\tilde{\Phi}^{\infty, 0}(\dd s) \right\}.
$$
The recursion \eqref{eqn:Phiforgammainfinity} for $X\equiv 0$ will be 
\begin{equation}
    m(t,0) = \nu_0 p_{t}(0,0)+  \int_{0}^t p_{t-s}(0,0)\,\tilde{\Phi}^{\infty, X}(\dd s).
\label{eqn:kappa0Phiforgammainfinity}
\end{equation}
Denote Laplace transform of $\tilde{\Phi}^{\infty, 0}$ as
\begin{equation}
\hat{\tilde{\Phi}}^{\infty, 0}(\lambda)= \int_0^\infty  e^{-\lambda t} \tilde{\Phi}^{\infty,0}(\dd t), \text{ for } \lambda>0.
\label{eqn:gammainfmeasurelaplace}
\end{equation} 
Applying Laplace transform over \eqref{eqn:kappa0Phiforgammainfinity} using \eqref{eqn:gammainfmeasurelaplace}, \eqref{eqn:Laplacekappazero} 
and solving for $\hat{\tilde{\Phi}}^{\infty,0}(\lambda)$, we obtain
\begin{equation}
    \hat{\tilde{\Phi}}^{\infty,0}(\lambda) = \frac{\hat{m}(\lambda)}{\hat{p}(\lambda)} - \nu_0.
    \label{eqn:gammainftyPhiHatexpression}
\end{equation}
From \eqref{eqn:phatlambda} and \eqref{eqn:mhatlambda}, we get the following asymptotics for $\hat{\tilde{\Phi}}^{\infty, 0}(\lambda)$.
$$\hat{\tilde{\Phi}}^{\infty, 0}(\lambda) = \begin{cases}
     \, \bar{m}\frac{\sqrt{2}}{\lambda^{(\alpha -\frac{1}{2})}}L(1/\lambda)(1+o(1)), & \text{if } d=1,\\
   \,  \bar{m}\frac{\pi }{\lambda^{\alpha}\log\left(\frac{1}{\lambda}\right)}L(1/\lambda)(1+o(1)), & \text{if } d=2,\\
    \, \bar{m} \frac{1}{G_d(0)}\cdot \frac{1}{\lambda^{\alpha}}L(1/\lambda)(1+o(1)), & \text{if } d\ge 3,
     \end{cases}$$
Again using Tauberian theorem \cite[Chapter~XIII.5, Theorem~2]{F1971}, we have the following asymptotics for $-\log Z^{\infty,0}_t$ as $t\to \infty$,
$$-\log Z^{\infty,0}_t = \nu_0+\Phi^{\infty,0}(t) = \begin{cases}
         \bar{m}\sqrt{2} \frac{t^{(\alpha-\frac{1}{2})}}{\Gamma(\alpha+\frac{1}{2}
        )}L(t)(1+o(1)),& \text{if } d=1,\\
         \frac{\bar{m}\pi}{\Gamma(\alpha + 1)} \frac{t^{\alpha}}{\log t}L(t)(1+o(1)), & \text{if } d=2,\\
         \frac{\bar{m}}{\Gamma(\alpha+1)} \frac{1}{G_d(0)}t^{\alpha}L(t)(1+o(1)), & \text{if } d\ge 3.
         \end{cases}$$
which proves Proposition~\ref{lm:kappa0} for $\gamma = \infty$. 
\end{proof}

\subsection{Proof of Proposition~\ref{prop:LB}}
The proof of Proposition~\ref{prop:LB} proceeds by a classical confinement survival strategy. We first state a lemma which provides an useful estimate relating the initial mean number of traps in ball of radius $R>1$ to the average number of traps visiting the origin. 

\begin{lemma}\label{lm:nuysum}
    Assume {\rm \ref{list:A3}}, there exists $C_1, C_2 >0$ such that for $R>1$, 
    \begin{equation}
        \sum_{y\in B_R} \nu_y \le \begin{cases}
            \frac{C_1}{R}\int_0^{4 R^2} m(s,0)\, \dd s  & \text{ if }d=1,\\
            \frac{C_2}{\sqrt{R}} \int_0^{4R^2} m(s,0)\, \dd s  & \text{ if }d=2,
        \end{cases}
        \end{equation}
    where $B_R = \{x\in \Z^d, \lVert x \rVert_\infty \le R\}$.
\end{lemma}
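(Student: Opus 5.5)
The plan is to compare both sides through the truncated Green function. By \eqref{eqn:poissonmean} and Fubini,
\begin{equation*}
\int_0^{4R^2} m(s,0)\,\dd s=\sum_{y\in\Z^d}\nu_y\,G_{4R^2}(y,0)\ \ge\ \Big(\min_{y\in B_R}G_{4R^2}(y,0)\Big)\sum_{y\in B_R}\nu_y ,
\end{equation*}
since all terms are non-negative. So the lemma reduces to a lower bound on $G_{4R^2}(y,0)$ that is uniform over $y\in B_R$. This step uses only $\nu_y\ge 0$. Assumption \ref{list:A3} is needed only to know that the right-hand side is finite and of order $R^{2\alpha}L(R^2)$, so that the bound is useful later in Proposition~\ref{prop:LB}.

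To bound the Green function, I restrict the time integral to $s\in[R^2,4R^2]$. For such $s$ and $\lVert y\rVert_\infty\le R$ we have $|y|^2/s\le d$. The uniform local central limit theorem for the rate-one walk (\cite[Chapter~II.7]{S1964}) then gives a constant $c_d>0$ and an $R_0$ such that
\begin{equation*}
p_s(y,0)\ge c_d\,s^{-d/2}\qquad\text{for all } R\ge R_0,\ s\in[R^2,4R^2],\ y\in B_R .
\end{equation*}
For the finitely many $1<R<R_0$, positivity of $p_s(y,0)$ on a compact time window gives the same kind of bound after shrinking $c_d$.

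In $d=1$ this yields $G_{4R^2}(y,0)\ge c_1\int_{R^2}^{4R^2}s^{-1/2}\,\dd s=2c_1R$. Hence $\sum_{B_R}\nu_y\le \frac{C_1}{R}\int_0^{4R^2}m(s,0)\,\dd s$ with $C_1=1/(2c_1)$, which is exactly the stated inequality. The only real work in $d=1$ is making the local limit lower bound uniform over the window.

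In $d=2$ the same computation gives $G_{4R^2}(y,0)\ge c_2\int_{R^2}^{4R^2}s^{-1}\,\dd s=c_2\log 4$, which is of order one. This is the step I expect to be the obstacle, and I have to report that the $1/\sqrt R$ factor as worded cannot be obtained: the inequality in that form is false. Take the homogeneous field $\nu_y\equiv\nu$, which satisfies \ref{list:A3} with $\alpha=1$ and $L\equiv1$. Then $\sum_{B_R}\nu_y=\nu(2R+1)^2$, while $\frac{C_2}{\sqrt R}\int_0^{4R^2}m(s,0)\,\dd s=4C_2\nu R^{3/2}$, and the former dominates as $R\to\infty$.

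Since no choice of method can give that factor, my plan is to prove the $d=2$ estimate in the form the Green-function argument actually delivers, namely with a constant prefactor. I would flag this as the corrected statement. Before using it, I would recheck the step of Proposition~\ref{prop:LB} that invokes the lemma with $R=R_t$ chosen to optimise the confinement strategy. That check should confirm that the $d=2$ bound enters only through a term $\sum_{B_{R_t}}\nu_y$ which is $o(t^\alpha L(t)/\log t)$ under the chosen $R_t$, rather than through the spurious $R^{-1/2}$ gain.
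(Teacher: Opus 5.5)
Your proposal is correct, including your objection in $d=2$. In $d=1$ you use the same ingredient as the paper: the uniform lower bound $p_s(y,0)\ge c_d s^{-d/2}$ for $s\ge 1$ and $y\in B_{\sqrt s}$. The paper applies it inside $\int_1^{T}s^{-d/2}\sum_{y\in B_{\sqrt s}}\nu_y\,\dd s$, substitutes $s=u^2$, and then extracts $\sum_{y\in B_R}\nu_y$ from the monotonicity of $r\mapsto\sum_{y\in B_r}\nu_y$ on $[R,2R]$. You instead bound $G_{4R^2}(y,0)$ directly on the window $[R^2,4R^2]$ for the fixed ball $B_R$.

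The two routes are equivalent in $d=1$. Yours is shorter and avoids exactly the step where the paper goes wrong. Since $\dd s=2u\,\dd u$ and $s^{-d/2}=u^{-d}$, the weight after the substitution is $u^{1-d}$. In $d=2$ this is $u^{-1}$, not $s^{-(d-1)/2}=u^{-1/2}$ as written. With the correct weight the paper's last step becomes $\log 2\sum_{y\in B_R}\nu_y\le\int_R^{2R}r^{-1}\sum_{y\in B_r}\nu_y\,\dd r\le\frac{1}{2c_2}\int_0^{4R^2}m(s,0)\,\dd s$. That is a constant prefactor, exactly as in your $c_2\log 4$ computation. Your homogeneous example, comparing $\nu(2\lfloor R\rfloor+1)^2$ with $4C_2\nu R^{3/2}$, shows the $R^{-1/2}$ gain is genuinely false, not merely unproved.

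Your downstream check also works. In the lower bound of Proposition~\ref{prop:LB}, the lemma enters only through $\log\pr(E_t)=-\sum_{y\in B_{R_t}}\nu_y$. With $R_t=t^{1/(2\alpha+2)}$ the corrected bound gives
\[
-\log\pr(E_t)\lesssim\int_0^{4R_t^2}m(s,0)\,\dd s\sim\bar m\,4^\alpha t^{\alpha/(\alpha+1)}L\bigl(4t^{1/(\alpha+1)}\bigr).
\]
This is $o\bigl(t^{\alpha}L(t)/\log t\bigr)$ for every $\alpha\in(0,1]$, because $\alpha/(\alpha+1)<\alpha$. So the exponent $\frac{4\alpha-1}{4\alpha+4}$ stated there should read $\frac{\alpha}{\alpha+1}$, the same order as the $\log\pr(G_t)$ term, and the $d=2$ conclusion is unaffected.

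One cosmetic point: $R$ is real, so ``the finitely many $1<R<R_0$'' is not the right phrasing. Either quote the lower bound on $p_s(y,0)$ for all $s\ge1$, $y\in B_{\sqrt s}$, which already covers small $R$, or argue by compactness of $[1,4R_0^2]$ and finiteness of $B_{R_0}$.
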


\begin{proof}[Proof of Lemma~\ref{lm:nuysum}]
Let $d=1,2$. Fix $R>1$. The definition \eqref{eqn:poissonmean} yields, 
 \begin{equation}
        \int_0^R m(s,0)\, \dd s = {\int_0^R \sum_{y\in \Z} \nu_y p_s(y,0) \, \dd s } \ge \int_1^R \sum_{y\in B_{\sqrt{s}}} \nu_y p_s(y,0) \, \dd s.
        \label{eqn:int0toR}
    \end{equation}
 \medskip
 For the simple symmetric random walk on $\Z^d$, there exists $c_d>0$ such that for all $t\ge1$ and $y\in B_{\sqrt{t}}$,
$
\,p_t(y,0)\ge \frac{c_d}{t^{d/2}}.
$
 Using this in \eqref{eqn:int0toR},
    \begin{align}
        \int_0^R m(s,0)\, \dd s &{\ge c_d\int_1^R \frac{1}{s^{d/2}}\sum_{y\in B_{\sqrt{s}}} \nu_y \, \dd s}
        = 2c_d \int_1^{\sqrt{R}}\frac{1}{s^{\frac{d-1}{2}}} \sum_{y\in B_s}\nu_y \, \dd s,
    \end{align}
    {where the last step is obtained by change of variables}. Since $ \sum_{y\in B_r}\nu_y $ is increasing in $r$ and rescaling the integrals we get 
$$\int_1^{{R}} \frac{1}{s^{\frac{d-1}{2}}} \sum_{y\in B_s}\nu_y \, \dd s \le  \frac{1}{2c_d}\int_0^{R^2} m(s,0)\, \dd s.$$
In dimension $d=1$, we have
$$ {R\sum_{y\in B_R}\nu_y \le \int_R^{2R} \sum_{y\in B_r}\nu_y \, \dd r =  \int_1^{{2R}} \sum_{y\in B_s}\nu_y \, \dd s -\int_1^{{R}} \sum_{y\in B_s}\nu_y \, \dd s \le \frac{1}{2c_1}\int_0^{4R^2} m(s,0)\, \dd s,}$$
which proves Lemma~\ref{lm:nuysum} in dimension $d = 1$.

\noindent In dimension $d=2$, we have
$$ {\sqrt{\frac{R}{2}}\sum_{y\in B_R}\nu_y \le \int_R^{2R} \frac{1}{\sqrt{r}} \sum_{y\in B_r}\nu_y \, \dd r =  \int_1^{{2R}} \frac{1}{\sqrt{r}} \sum_{y\in B_s}\nu_y \, \dd s -\int_1^{{R}} \frac{1}{\sqrt{r}}\sum_{y\in B_s}\nu_y \, \dd s \le \frac{1}{2c_2}\int_0^{4R^2} m(s,0)\, \dd s,}$$
which proves Lemma~\ref{lm:nuysum} for dimension $d = 2$.
\end{proof}

We are now ready to prove Proposition~\ref{prop:LB}.
\begin{proof}[Proof of Proposition~\ref{prop:LB}]
Let $B_r$ denote the ball of radius $r$ around the origin, $B_r = \{x\in \Z^d, \lVert x \rVert_\infty \le r\}$. Choose a scale function $1<< R_t << \sqrt{t}$. Let $E_t$ denote the event that $N_y = 0$ for all $y\in B_{R_t}$. Let $F_t$ denote the event that $Y^y_j(s)\not\in B_{R_t}$ for all $y\not\in B_{R_t}, \; 1\le j\le N_y$ and $s \in [0,t]$ (i.e., any walk $Y$ that starts outside the ball of radius $B_{R_t}$ does not enter $B_{R_t}$ till time $t$). Let $G_t$ denote the event that $X$ with $X(0)=0$ does not leave the ball $B_{R_t}$ before time $t$. The annealed survival probability is lower bounded by,
\begin{equation}
    \mathbb{E}^\xi[Z^\gamma_{t, \xi}] \ge \mathbb{P}(E_t \cap F_t \cap G_t) = \mathbb{P}(E_t)\mathbb{P}(F_t)\mathbb{P}(G_t).
    \label{eq12}
\end{equation}
We first estimate $\mathbb{P}(E_t)$. Using Lemma~\ref{lm:nuysum}, there exists $C_1>0$ such that
\begin{align}
    \mathbb{P}(E_t) &= \mathbb{P}(N_y = 0 \, \forall y\in B_{R_t}) = \prod_{y\in B_{R_t}}\mathbb{P}(N_y = 0) = e^{-\sum_{y\in B_{R_t}}\nu_y} \ge  \begin{cases} e^{-\frac{C_1}{R_t}\int_0^{4R_t^2}m(s,0)\, \dd s} & \text{ if } d=1,\\
   e^{- \frac{C_2}{\sqrt{R_t}} \int_0^{4R_t^2} m(s,0)\, \dd s}  & \text{ if }d=2.
   \end{cases}
     \label{eqn:ET}
\end{align}
We next estimate $\mathbb{P}(G_t)$.
By Donsker's invariance principle, if $1<<R_t<<\sqrt{t}$ as $t\to \infty$, then there exists $\beta >0$ such that for all $t$ sufficiently large
\begin{equation}
    \inf_{x\in B_{\sqrt{t}/2}} \mathbb{P}\left(X(s)\in B_{\sqrt{t}} \; \forall s\in [0,t], X(t) \in B_{\sqrt{t}/2}|X(0) = x)\right) \ge \beta.
\end{equation}
Partitioning the time interval $[0,t]$ into intervals of length $R_t^2$, we have
\begin{align}
    \mathbb{P}(G_t) &\geq \mathbb{P}\left(X(s) \in B_{R_t} \, \forall \, s \in [(i-1)R_t^2, iR_t^2], \, \text{and } X(iR_t^2) \in B_{R_t/2}, \, i = 1, 2, \dots, \lfloor t / R_t^2 \rfloor \right) \nonumber\\
& \geq \beta^{t / R_t^2} = e^{t \log \beta / R_t^2}.
\label{eqn:GT}
\end{align}
Lastly we estimate $\mathbb{P}(F_t)$.
Let $\tilde{F}_t$ denote the event that  $Y^y_j(s) \neq 0 \text{ for all } y\in \Z^d,\; 1\le j\le N_y$ and $s\in [0,t]$.  $\mathbb{P}(\tilde{F}_t)$ is the annealed survival probability when $\kappa = 0$, and $\gamma = \infty$. Let $\tau_{B_{R_t}}$ be the stopping time when $Y$ first enters $B_{R_t}$, and $\tau_0$ be the stopping time when $Y$ first visits $0$. From usual computation of $\mathbb{P}(F_t)$,
\begin{equation}
    \log \mathbb{P}(F_t) = - \sum_{y\in \Z^d \backslash B_{R_t}}\nu_y\mathbb{P}^Y_y(\tau_{B_{R_t}}\le t).
    \label{ft}
\end{equation}
Note that,
\begin{equation}
    \sum_{y\in \Z^d \backslash B_{R_t}}\nu_y \,\mathbb{P}^Y_y(\tau_{B_{R_t}}\le t) \ge \sum_{y\in \Z^d \backslash B_{R_t}}\nu_y\, \mathbb{P}^Y_y(\tau_{0}\le t) = \sum_{y\in \Z^d}\nu_y\,\mathbb{P}^Y_y(\tau_{0}\le t) - \sum_{y\in B_{R_t}}\nu_y\,\mathbb{P}^Y_y(\tau_{0}\le t).
\end{equation}
In terms of the events, 
\begin{equation}
   \log \pr(F_t)\le \log\pr(\tilde{F}_t) + \sum_{y\in B_{R_t}}\nu_y\,\mathbb{P}^Y_y(\tau_{0}\le t) \le \log\pr(\tilde{F}_t) + \frac{C_1}{R_t}\int_0^{4R_t^2} m(s,0)\,\dd s.
\end{equation}
On the other hand, for $\epsilon > 0$, we have
\begin{align*}
\sum_{y \in \mathbb{Z}^d \setminus B_{R_t}} \nu_y\,\mathbb{P}^Y_y(\tau_{B_{R_t}} \leq t) &\geq
\sum_{y \in \mathbb{Z}^d \setminus B_{R_t}}\nu_y\, \mathbb{P}^Y_y(\tau_{B_{R_t}} \leq t, \tau_0 \leq t + \epsilon t) \\
&\geq
\inf_{z \in \partial B_{R_t}} \mathbb{P}^Y_z(\tau_0 \leq \epsilon t) \sum_{y \in \mathbb{Z}^d \setminus B_{R_t}} \nu_y\, \mathbb{P}^Y_y(\tau_{B_{R_t}} \leq t),
\end{align*}
where we used the strong Markov property. Therefore
\begin{equation}
\sum_{y \in \mathbb{Z}^d \setminus B_{R_t}}\nu_y \mathbb{P}^Y_y(\tau_{B_{R_t}} \leq t) \leq
\frac{\sum_{y \in \mathbb{Z}^d} \nu_y \mathbb{P}^Y_y(\tau_0 \leq t + \epsilon t)}{
\inf_{z \in \partial B_{R_t}} \mathbb{P}^Y_z(\tau_0 \leq \epsilon t)},
\label{eq20}
\end{equation}
and hence by \eqref{ft},  and \eqref{eq20},
\begin{equation}
\log \mathbb{P}(F_t) \ge \frac{\log \mathbb{P}(\tilde{F}_{t + \epsilon t})}{
\inf_{z \in \partial B_{R_t}}\mathbb{P}^Y_z(\tau_0 \leq t)}.
\label{eq21}
\end{equation}
For $d = 1$, let $R_t = t^{1/(2\alpha+1)}$, which is by no means the unique scale appropriate. Clearly $\inf_{z \in \partial B_{R_t}} \mathbb{P}^Y_z(\tau_0 \leq \epsilon t) \to 1$ as $t \to \infty$ and  $\mathbb{P}(\tilde{F}_t)$ satisfies the asymptotics in Proposition~\ref{lm:kappa0} for $\kappa = 0$ and $\gamma = \infty$. Since $\epsilon >0$ can be made arbitrarily small, using \eqref{eq21} we obtain
$$
\log \mathbb{P}(F_t) = -  \sqrt{2}\, \bar{m} \frac{\Gamma(\alpha+1)}{\Gamma(\alpha+\frac{1}{2}
        )}t^{(\alpha-\frac{1}{2})}L(t)(1+o(1)).
$$
Using \eqref{eqn:ET} and by the choice of $R_t$ we have
$$
\log \mathbb{P}(E_t) \ge  -\frac{C_1}{t^{\frac{1}{2\alpha+1}}}\int_0^{4 t^{\frac{2}{2\alpha +1}}} m(s,0)\, \dd s 
$$
and using \ref{list:A3},
$$
\liminf_{t\to \infty}  \frac{1}{t^{\frac{2\alpha-1}{2\alpha+1}} L(t^{\frac{2}{2\alpha+1}})}\log \mathbb{P}(E_t) \ge - C_1' \bar{m} ,
$$
for some $C_1'>0$.
Also, using \eqref{eqn:GT}
$$
\log \mathbb{P}(G_t) \geq \log \beta \, t^{1/(2\alpha+1)}.
$$
Since $\alpha> 1/2$, substituting these asymptotics into \eqref{eq12} gives,
\begin{equation*}
\liminf_{t\to \infty}\frac{1}{t^{(\alpha-\frac{1}{2})}L(t)}\log \mathbb{E}^\xi [Z^\gamma_{t, \xi}] \ge -  \sqrt{2}\, \bar{m}\frac{\Gamma(\alpha+1)}{\Gamma(\alpha+\frac{1}{2}
        )},
\end{equation*}
which proves Proposition~\ref{prop:LB} for $d=1$.

For $d = 2$, let $R_t = t^{1/(2\alpha+2)}$. Then we have $\inf_{z \in \partial B_{t^{1/(2\alpha+2)}}} \mathbb{P}^Y_z(\tau_0 \leq \epsilon t) \to 1$ as $t \to \infty$. By the same argument as for $d = 1$, we have
$$
\log \mathbb{P}(F_t) =  -\bar{m} \pi \frac{t^\alpha}{\log t} L(t) (1 + o(1)).
$$
Using \eqref{eqn:ET}, \eqref{eqn:GT} and by choice of $R_t$ we have
$$
\liminf_{t\to\infty} \frac{1}{t^{\frac{4\alpha-1}{4\alpha+4}}L(t^{\frac{2}{2\alpha +2}})}\log \mathbb{P}(E_t) \ge -C_2' \bar{m} \quad \text{and} \quad \log \mathbb{P}(G_t) \geq t^{\frac{2\alpha}{2\alpha+2}} \log \beta,
$$
for some $C_2', \beta >0$.
Therefore, using \eqref{eq12} we have
\begin{equation*}
\liminf_{t\to \infty}\frac{\log t}{t^{\alpha}L(t)}\log \mathbb{E}^\xi [Z^\gamma_{t, \xi}] \ge - \bar{m}_{\rho,d} \pi \rho,
\end{equation*}
which proves Proposition~\ref{prop:LB} for $d=2$.
\end{proof}

\section{Proof of Theorem~\ref{thm:Dt} and Theorem~\ref{thm:Nt}} \label{sec:dtntproof}
In this section, we prove Theorem~\ref{thm:Dt} and Theorem~\ref{thm:Nt}, using the cumulant expansion method (see \cite{CG1984}). We make use of the following lemma based on the results of \cite{S1969,PS1975, S1978}.

Let $(W_t)_{t\ge 0}$ be random variables on a common probability space $(\Omega, \mathcal{F}, \pr)$. Let $(a_t)_{t\ge 0}$ be a positive sequence such that $a_t \to \infty$ as $t\to \infty$. Define for $t\ge 0$ and $\lambda \in \R$
$$\Psi_t(\lambda) = \frac{1}{a_t}\log \E[e^{\lambda W_t}].$$
\textbf{Hypothesis:}
\begin{enumerate}[label= (H\arabic*)]
\item \label{list:H1}
    On the interval $(\lambda_{-}, \lambda_{+})\ni 0$,
    $$\lim_{t\to \infty} \Psi_t(\lambda) = \Psi(\lambda)< \infty.$$
    Also $\Psi(\cdot)$ is strictly convex and twice differentiable on $(\lambda_-, \lambda_+)$. 
\end{enumerate}
\begin{lemma}\label{lm:largedeviations}
Assume Hypothesis {\rm\ref{list:H1}}. Let $\mu := \Psi'(0)$, $\sigma^2 = \Psi''(0)$, $\beta_- = \lim_{\lambda \to \lambda_-}\Psi'(\lambda)$ and $\beta_+ = \lim_{\lambda \to \lambda_+}\Psi'(\lambda)$. Then, the following hold:
\begin{enumerate}
    \item[(a)] (Law of large numbers).
    $$\frac{W_t}{a_t} \to \mu \quad \text{ in probability as } t\to \infty.$$
    The convergence is almost sure on $t=1,2,\ldots$ provided $\lim_{t\to \infty}a_t/\log t = \infty$.
    \item[(b)] (Central Limit Theorem). If for each t, $\Psi_t'$ is convex on $[0,\lambda_+)$, and if $\lim_{t\to \infty}\Psi_t''(0) = \sigma^2$, then $$\frac{W_t - \E[W_t]}{\sqrt{a_t}}\xrightarrow{d}\mathcal{N}(0,\sigma^2)\quad \text{as } t\to \infty$$.
    \item[(c)] (Large deviations). For all $\beta \in (\mu, \beta_+)$, 
    $$\lim_{t\to \infty}a_t^{-1}\log \pr \left(\frac{W_t}{a_t} > \beta\right) = -I(\beta)\in (-\infty,0),$$
    and for all $\beta\in (\beta_-, \mu)$, 
    $$\lim_{t\to \infty}a_t^{-1} \log \pr\left(\frac{W_t}{a_t}<\beta\right) = -I(\beta)\in (-\infty,0),$$
    where $I(\beta) = \beta\lambda_\beta - \Psi(\lambda_\beta)$, and $\lambda_\beta$ is the unique solution of $\Psi'(\lambda_\beta) = \beta$.
\end{enumerate}
\end{lemma}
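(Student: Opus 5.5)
The lemma collects classical results (Ellis--Gärtner type arguments, cf. \cite{S1969,PS1975,S1978}). I would prove its three parts in the order (c), (a), (b), since the law of large numbers follows from exponential concentration.

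\textbf{Large deviations (c).} Take $\beta\in(\mu,\beta_+)$. For the upper bound, apply the exponential Chebyshev inequality for any $\lambda\in(0,\lambda_+)$:
$$\pr(W_t>\beta a_t)\le e^{-a_t(\lambda\beta-\Psi_t(\lambda))}.$$
Letting $t\to\infty$ gives $\limsup a_t^{-1}\log\pr\le -\sup_{\lambda\in(0,\lambda_+)}(\lambda\beta-\Psi(\lambda))$. Strict convexity of $\Psi$ and $\Psi'(0)=\mu<\beta<\beta_+$ give a unique $\lambda_\beta\in(0,\lambda_+)$ with $\Psi'(\lambda_\beta)=\beta$, and this $\lambda_\beta$ attains the supremum, so the bound is $-I(\beta)$.

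For the lower bound, use an exponential tilt. Define $\tilde\pr_t$ by $d\tilde\pr_t/d\pr=e^{\eta W_t}/\E[e^{\eta W_t}]$, with $\eta=\lambda_{\beta'}$ for some $\beta'\in(\beta,\beta+\delta)$ slightly above $\beta$. Then
$$\pr(\beta a_t<W_t<(\beta+\delta)a_t)\ge e^{a_t(\Psi_t(\eta)-\eta(\beta+\delta))}\,\tilde\pr_t(W_t/a_t\in(\beta,\beta+\delta)).$$
Under $\tilde\pr_t$ the scaled log-moment generating function is $\Psi_t(\eta+\cdot)-\Psi_t(\eta)$, which converges near $0$ to a function with derivative $\beta'$ at $0$. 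The upper-bound argument applied to both tails under $\tilde\pr_t$ shows that $\tilde\pr_t(W_t/a_t\notin(\beta,\beta+\delta))$ decays exponentially, so the tilted probability tends to $1$. Letting $\delta\downarrow0$ and using continuity of $I$ gives the matching lower bound. The lower tail, $\beta\in(\beta_-,\mu)$, is symmetric, using $\lambda<0$. Finally, $I(\beta)\in(0,\infty)$ because $\Psi$ is strictly convex and $\beta\ne\mu$.

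\textbf{Law of large numbers (a).} The upper bound in (c), applied with $\beta=\mu\pm\epsilon$, gives $\pr(|W_t/a_t-\mu|>\epsilon)\le e^{-c_\epsilon a_t}$ for large $t$. This yields convergence in probability. If $a_t/\log t\to\infty$, then $e^{-c_\epsilon a_t}\le t^{-2}$ eventually, so the bounds are summable over $t=1,2,\ldots$, and Borel--Cantelli gives almost sure convergence along integers.

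\textbf{Central limit theorem (b).} This is the delicate step, because Hypothesis~\ref{list:H1} only controls $\Psi_t$ at scale $a_t$, while the CLT requires $\lambda$ of order $a_t^{-1/2}$. The plan is to show
$$\log\E\big[e^{s(W_t-\E W_t)/\sqrt{a_t}}\big]=a_t\big[\Psi_t(s/\sqrt{a_t})-\Psi_t(0)-\Psi_t'(0)s/\sqrt{a_t}\big]\to \tfrac{\sigma^2s^2}{2}.$$
Write the bracket as $\int_0^{s/\sqrt{a_t}}(\Psi_t'(u)-\Psi_t'(0))\,du$. Since $\Psi_t$ is convex as a log-MGF, the integrand is monotone, and the added convexity of $\Psi_t'$ on $[0,\lambda_+)$ makes $\Psi_t''$ nondecreasing there. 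Hence
$$\Psi_t''(0)\,\tfrac{s^2}{2a_t}\le\text{bracket}\le \Psi_t''(s/\sqrt{a_t})\,\tfrac{s^2}{2a_t},$$
and the lower bound tends to $\sigma^2s^2/2$ after multiplying by $a_t$, because $\Psi_t''(0)\to\sigma^2$.

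For the upper bound I need $\Psi_t''(h)\to\sigma^2$ as $h\to0$ uniformly in large $t$. The approach is to combine monotonicity of $\Psi_t''$ with the convexity inequality $\Psi_t''(h)\le(\Psi_t'(2h)-\Psi_t'(h))/h$. Pointwise convergence of the convex functions $\Psi_t$ implies convergence of $\Psi_t'$ away from kinks, so $\Psi'_t(2h)-\Psi'_t(h)\to\Psi'(2h)-\Psi'(h)\approx\Psi''(0)h$. This gives a one-sided Gaussian MGF limit for $s\ge0$. I expect this to be the main obstacle, together with the need to identify $\Psi''(0)$ with $\lim_t\Psi_t''(0)$. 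Since MGF convergence on a one-sided interval $[0,\epsilon)$ determines the Gaussian limit for tight sequences (by analyticity), I would conclude by combining tightness with this one-sided limit, as in \cite{S1978}.
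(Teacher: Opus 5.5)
Your proposal is correct. The paper gives no argument of its own for this lemma: it refers to Lemma~1 and the remarks on p.~547 of \cite{CG1984}, which rest on \cite{S1969,PS1975,S1978}. Your write-up is a self-contained reconstruction of that classical route:
(i) exponential Chebyshev plus an exponential tilt for (c), which works because $\Psi$ is differentiable on $(\lambda_-,\lambda_+)$, so the tilted limit has derivative $\beta'$ at $0$;
(ii) exponential concentration plus Borel--Cantelli for (a);
(iii) a one-sided moment generating function argument for (b).
The citation buys brevity. Your version shows where each hypothesis is used; in particular, $\Psi_t'$ is assumed convex only on $[0,\lambda_+)$, which is exactly why (b) has to go through $s\ge 0$.

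The two difficulties you anticipate in (b) are not real obstacles.
First, $\lim_t\Psi_t''(0)=\Psi''(0)$ is a hypothesis of (b), since $\sigma^2:=\Psi''(0)$.
Second, your own sandwich already gives the uniform control. Fix $h$ with $2h<\lambda_+$ and take $t$ so large that $s/\sqrt{a_t}\le h$. Monotonicity of $\Psi_t''$ gives $\Psi_t''(s/\sqrt{a_t})\le\Psi_t''(h)\le(\Psi_t'(2h)-\Psi_t'(h))/h$. Pointwise convergence of the convex $\Psi_t$ to the differentiable $\Psi$ gives $\Psi_t'\to\Psi'$ pointwise, and $(\Psi'(2h)-\Psi'(h))/h\to\Psi''(0)$ as $h\downarrow 0$.

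To finish, with $U_t=(W_t-\E W_t)/\sqrt{a_t}$:
tightness comes from $\E U_t=0$ and $\mathrm{Var}(U_t)=\Psi_t''(0)\to\sigma^2$.
Boundedness of $\E[e^{2sU_t}]$ gives uniform integrability of $e^{sU_t}$, so every subsequential limit $U$ satisfies $\E[e^{sU}]=e^{\sigma^2 s^2/2}$ for all $s\ge 0$.
Analytic continuation to $\{\mathrm{Re}\,z>0\}$, followed by dominated convergence (with bound $1+e^{U}$) as $\mathrm{Re}\,z\downarrow 0$, identifies the characteristic function of $U$ as that of $\mathcal{N}(0,\sigma^2)$.
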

\begin{proof}
See Lemma~1 and Remarks on page 547 in \cite{CG1984}.
\end{proof}

We are now ready to prove Theorem~\ref{thm:Dt}.
\begin{proof}[Proof of Theorem~\ref{thm:Dt}]
It is enough to verify hypothesis \ref{list:H1} for the process $W_t = \frac{b_t}{\bar{m}}D_t$, where 
\begin{equation}
    b_t = \begin{cases}
        \,t^{-\frac12} & \text{ if } d=1,\\
        \,\frac{1}{\log t} & \text{ if } d=2,\\
        \, 1 & \text{ if } d\ge 3.
    \end{cases}
    \label{eqn:btdefn}
\end{equation} 
Let $a_t$ be as in \eqref{eqn:defat}. Let $\Psi_t: \R \to \R$ be defined as  
\begin{equation}
\Psi_t(\lambda) := \frac{1}{a_t}\log\E^\xi\left[\exp\left\{\frac{\lambda b_t}{\bar{m}}D_t\right\}\right].
\label{eqn:Psitlambda}
\end{equation}
Integrating out the Poisson field $\xi$, we get 
\begin{align}
    \Psi_t(\lambda) = \sum_{n=1}^\infty \frac{1}{a_t}\phi^0_n(t)\left(\frac{b_t}{\bar{m}}\right)^n\lambda^n
    \label{eqn:Psitlambdaseries}
\end{align}
where $\phi^0_n(t)$ is defined as in \eqref{eqn:Xsmallphindefn} with $X\equiv 0$. {From \ref{list:A1}, since $0\le\phi^0_n(t)\le \nu^* \frac{t^n}{n!}$ for fixed $t\ge 0$}, we have that $\Psi_t$ is {well defined and} real analytic on the whole line and $\Psi'_t$ is strictly convex on $[0,\infty)$ for each $t>0$. We assume the following lemma, whose proof is deferred until after the proof of the theorem.
\begin{lemma}
Let $\Psi_t(\cdot)$ be as in \eqref{eqn:Psitlambdaseries}, for $t\geq 0$. Under Assumptions~{\rm\ref{list:A1}} and {\rm\ref{list:A3}}, the limit
$\Psi(\lambda):=\lim_{t\to\infty}\Psi_t(\lambda)$ is given by
\begin{equation}
\Psi(\lambda)= \begin{cases}
	\displaystyle
	\,\sum_{n=1}^{\infty}\frac{\Gamma(\alpha+1)}
{\Gamma\left(\frac{n+1}{2}+\alpha\right)
(\sqrt{2}\bar{m})^{n-1}}\lambda^n, & d=1,\quad \lambda\in\mathbb{R},\\[8pt]
	\displaystyle
	\,\frac{\lambda\pi\bar{m}}{\pi\bar{m}-\lambda}, & d=2,\quad |\lambda|<\pi\bar{m},\\[8pt]
	\displaystyle
	\,\frac{\lambda}{1-\frac{\lambda}{\bar{m}}G_d(0)},& d\geq3,\quad \lambda\leq\frac{\bar{m}}{G_d(0)}.
\end{cases}
\label{eqn:Psi_combined}
\end{equation}
Moreover,
\begin{equation}
\lim_{t\to\infty}\Psi_t''(0)=
	\begin{cases}
	\displaystyle
	\,\frac{\sqrt{2}\Gamma(\alpha+1)}{\Gamma\left(\alpha+\frac{3}{2}\right)\bar{m}},& d=1,\\[8pt]
	\displaystyle
	\,\frac{2}{\pi\bar{m}}, & d=2,\\[8pt]\displaystyle
	\,\frac{2G_d(0)}{\bar{m}},& d\geq3.
\end{cases}
\label{eqn:Psi_second_combined}
\end{equation}
\label{lm:DtCombined}
\end{lemma}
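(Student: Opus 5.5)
The plan is to compute the asymptotics of each coefficient $\phi^0_n(t)$ separately via Laplace transforms, and then pass to the limit term by term in the series \eqref{eqn:Psitlambdaseries}. With $X\equiv 0$, the relation \eqref{eqn:tildephirec} becomes the convolution $\tilde\phi^0_n = p_\cdot(0,0) * \tilde\phi^0_{n-1}$, with $\tilde\phi^0_0 = m(\cdot,0)$. By \eqref{eqn:psiandphirelation} the Laplace transform of $\phi^0_n$ is therefore
\[
\hat\phi^0_n(\lambda) = \frac{1}{\lambda}\,\hat m(\lambda)\,\hat p(\lambda)^{n-1}.
\]
Inserting \eqref{eqn:mhatlambda} and \eqref{eqn:phatlambda} gives, as $\lambda\downarrow0$:
\begin{itemize}
\item in $d=1$: $\bar m\,\Gamma(\alpha+1)\,2^{-(n-1)/2}\,\lambda^{-(\alpha+1+\frac{n-1}{2})}L(1/\lambda)$;
\item in $d=2$: $\bar m\,\Gamma(\alpha+1)\,\pi^{-(n-1)}(\log\frac1\lambda)^{n-1}\lambda^{-(\alpha+1)}L(1/\lambda)$, where the log factor is slowly varying;
\item in $d\ge3$: $\bar m\,\Gamma(\alpha+1)\,G_d(0)^{n-1}\lambda^{-(\alpha+1)}L(1/\lambda)$.
\end{itemize}
Since $\phi^0_n$ is nondecreasing in $t$, the Tauberian theorem \cite[XIII.5, Thm 2]{F1971} applies directly. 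In $d=1$ it yields
\[
\phi^0_n(t)\sim \frac{\bar m\,\Gamma(\alpha+1)}{2^{(n-1)/2}\,\Gamma\!\left(\alpha+\frac{n+1}{2}\right)}\,t^{\alpha+\frac{n-1}{2}}L(t),
\]
with $t^{\alpha}L(t)(\log t)^{n-1}\pi^{-(n-1)}\bar m$ in $d=2$ and $t^{\alpha}L(t)G_d(0)^{n-1}\bar m$ in $d\ge3$; the factors $\Gamma(\alpha+1)$ cancel in the last two cases. Multiplying by $a_t^{-1}(b_t/\bar m)^n$, the powers of $t$ and $\log t$ cancel exactly. The $n$-th coefficient thus converges to the coefficient displayed in \eqref{eqn:Psi_combined}:
\begin{itemize}
\item in $d=1$: $\Gamma(\alpha+1)/\big(\Gamma(\tfrac{n+1}{2}+\alpha)(\sqrt2\bar m)^{n-1}\big)$;
\item in $d=2$: $(\pi\bar m)^{-(n-1)}$, which sums to $\lambda\pi\bar m/(\pi\bar m-\lambda)$;
\item in $d\ge3$: $(G_d(0)/\bar m)^{n-1}$, a geometric series.
\end{itemize}
The statement about $\Psi_t''(0)=2\,a_t^{-1}\phi^0_2(t)(b_t/\bar m)^2$ is just the $n=2$ case: $2\Gamma(\alpha+1)/(\Gamma(\alpha+\frac32)\sqrt2\bar m)=\sqrt2\Gamma(\alpha+1)/(\Gamma(\alpha+\frac32)\bar m)$ in $d=1$, $2/(\pi\bar m)$ in $d=2$, and $2G_d(0)/\bar m$ in $d\ge3$.

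The main obstacle is justifying the exchange of $\lim_t$ and $\sum_n$, which requires a bound on the coefficients uniform in $t$ and summable in $n$ for $|\lambda|$ in the stated range. For this I would use the time-domain structure rather than the Tauberian theorem. Since $m(s,0)\le\nu^*$ only gives $t^n/n!$, which is too crude, I would instead bound
\[
\phi^0_n(t)\le \Big(\int_0^t m(s,0)\,ds\Big)\Big(\sup_{u\le t}\ \text{$(n-1)$-fold ordered integral of } p\Big),
\]
using monotonicity of the convolution in the upper limit. The $(n-1)$-fold ordered convolution of $p_\cdot(0,0)$ up to time $t$ is $\E_0[(\int_0^t\delta_0(Y_s)ds)^{n-1}]/(n-1)!$, i.e.\ moments of the local time at $0$. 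These satisfy the known bounds $\le C^{n} t^{(n-1)/2}/\Gamma(\frac{n+1}{2})$ in $d=1$ (Kac moment formula), $\le (C\log t)^{n-1}$ in $d=2$ with $C$ close to $1/\pi$, and $\le G_d(0)^{n-1}$ in $d\ge3$. Combined with \ref{list:A3} for the first factor, this gives a dominating sequence. In $d=1$ it is summable for every $\lambda$ thanks to the Gamma denominator. In $d=2$ and $d\ge3$ one needs the constant to be sharp (asymptotically $1/\pi$, resp.\ exactly $G_d(0)$), which restricts to $|\lambda|<\pi\bar m$, resp.\ $\lambda<\bar m/G_d(0)$. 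Near the edge of the $d=2$ range I would first prove convergence for $|\lambda|<\pi\bar m(1-\epsilon)$ using a local-time bound $\E_0[(\int_0^t\delta_0)^{k}]\le k!\,((1+\epsilon)\log t/\pi)^k$ for large $t$, and then let $\epsilon\downarrow0$. For $\lambda<0$ the alternating series is handled by the same absolute bounds in $d=1,2$, while in $d\ge3$ with negative $\lambda$ I would instead use Proposition~\ref{lm:kappa0}, which corresponds to $\gamma=-\lambda/\bar m$, as a check.
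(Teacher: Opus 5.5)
Your proposal is correct. In $d=1$ it is essentially the paper's argument: the paper also obtains the per-coefficient limits \eqref{eqn:phindim1limit} from Laplace transforms and the Tauberian theorem, and its induction bound \eqref{eqn:inductionbound} is exactly your Kac-moment domination.

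In $d=2$ and $d\ge3$, however, the paper does not pass to the limit term by term. In $d=2$ it works with the renewal equation \eqref{eqn:Phirecursion} for the whole generating function. It gets an upper bound via $G_t(0,0)\sim\frac{1}{\pi}\log t$, and a lower bound by iterating the equation over the windows $[0,q^nt]$ and letting $N\to\infty$, $q\uparrow1$, with a separate manipulation for $\lambda<0$. In $d\ge3$ it Laplace-transforms $\tilde\Phi_t(\lambda)$ in $t$ and applies a single Tauberian theorem, which covers every $\lambda<\bar m/G_d(0)$ at once, including arbitrarily negative $\lambda$.

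Your route is more uniform. Its only inputs are the per-$n$ Tauberian computation and the domination $J_k(t):=\E_0^Y[\ell_t^k]/k!\le G_t(0,0)^k$, where $\ell_t=\int_0^t\delta_0(Y(s))\,\dd s$. You do not need to cite this as a known bound, since it is one line: $J_k(t)=\int_0^t p_u(0,0)J_{k-1}(t-u)\,\dd u\le G_t(0,0)J_{k-1}(t)$. This is the same estimate that underlies \eqref{eqn:d2upperbound}. Your approach also gives \eqref{eqn:Psi_second_combined} in every dimension directly as the $n=2$ coefficient, whereas the paper only asserts this ``by a similar analysis''.

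The price is that term-by-term passage only reaches the radius of convergence of the limiting series. So in $d\ge3$ the range $\lambda\le-\bar m/G_d(0)$ genuinely requires Proposition~\ref{lm:kappa0}. Because $b_t\equiv1$ and $\Phi_t(-\gamma)=-\Phi^{\gamma,0}(t)$, that proposition with $\gamma=-\lambda/\bar m$ gives exactly $\lambda/(1-\lambda G_d(0)/\bar m)$. You should present it as the proof on that range, not merely as a check.
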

Let $\Psi$ be defined as in Lemma~\ref{lm:DtCombined}.

\textbf{Case $\mathbf{d=1}$: }Since the series \eqref{eqn:Psi_combined} and the one obtained from it by term by term differentiation converge uniformly in every bounded interval,
$$\Psi'(\lambda) = 2\alpha + \frac{\sqrt{2}\Gamma(\alpha+1)}{\Gamma(\alpha+\frac{3}{2})\bar{m}} \left(\alpha+\frac{1}{2}\right)\lambda+\frac{1}{\bar{m}^2}\left(\lambda - \frac{(2\alpha-1)\bar{m}^2}{\lambda}\right)\Psi(\lambda),$$
with $\Psi(0)=0$. By solving this we get the closed form expression for $\Psi$ given in Theorem~\ref{thm:Dt}. Hypothesis (H1) will hold with $\lambda_- = -\infty$ and $\lambda_+ = +\infty$.  Also, we have $$\lim_{t\to \infty}\Psi''_t(0) =\frac{\sqrt{2}\Gamma(\alpha+1)}{\Gamma(\alpha+\frac{3}{2})\bar{m}} =\Psi''(0) =: \sigma^2.$$ Finally, we will have $\mu = 1$ , $\beta_- =0$, $\beta_+ = +\infty$, so Theorem~\ref{thm:Dt} for $d=1$ holds and $I_D(\beta) = \beta \lambda_\beta - \Psi(\lambda_\beta)$,where $\lambda_\beta$ is the unique solution to $\Psi'(\lambda_\beta) = \beta$.

\textbf{Case $\mathbf{d=2}$: } Lemma~\ref{lm:DtCombined} proves Hypothesis~\ref{list:H1} with $\lambda_- = -\pi \bar{m}, \lambda_+ = \pi\bar{m}$. We have $\mu = 1$, $\beta_- = 1/4$ and $\beta_+ = +\infty$. Also, we have $$\lim_{t\to \infty}\Psi''_t(0) =\frac{2}{\pi\bar{m}} =\Psi''(0) =: \sigma^2,$$ which verifies the assumption required to show central limit theorem.
To find $I_D(\beta)$, note that $I_D(\beta) = \beta\lambda_\beta - \Psi(\lambda_\beta)$, where $\lambda_\beta$ is such that $\Psi'(\lambda_\beta) = \beta$.  A simple calculation gives $\lambda_\beta = \pi\bar{m}\left(1-\frac{1}{\sqrt{\beta}}\right)$ and thus 
$$I_D(\beta) = \pi\bar{m}(\sqrt{\beta}-1)^2.$$
Theorem~\ref{thm:Dt} is proved for $d=2$.

\textbf{Case $\mathbf{d\ge3}$: } From Lemma~\ref{lm:DtCombined}, Hypothesis~\ref{list:H1} holds for $\lambda_- = -\infty$, $\lambda_+ = \frac{\bar{m}}{G_d(0)}$, with $\mu = 1, \beta_- = 0, \beta_+=+\infty$. Also, we have $$\lim_{t\to \infty}\Psi''_t(0) =\frac{2G_d(0)}{\bar{m}} =\Psi''(0) =: \sigma^2,$$ which verifies the assumption required to show central limit theorem. Similar computation as in $d=2$ gives $I_D(\beta) = \frac{\bar{m}}{G_d(0)}(\sqrt{\beta}-1)^2$.
\end{proof}

We now prove Lemma~\ref{lm:DtCombined} to complete the proof of Theorem~\ref{thm:Dt}.
\begin{proof}[Proof of Lemma~\ref{lm:DtCombined}]
For $t\ge 0$, set $\phi_n(t) := \phi^0_n(t)$, where $\phi^0_n(t)$ is given in \eqref{eqn:Xsmallphindefn} with $X\equiv 0$. 
A change of variable in \eqref{eqn:Xpsintexpansion} gives
\begin{equation}
    \phi_n(t) = \int_0^t \dd s\, p_s(0,0)\phi_{n-1}(t-s)
    \label{eqn:phinintrec}
\end{equation}
Denote
\begin{equation}
    \Phi_t(\lambda) := \sum_{n=1}^\infty \phi_n(t)\lambda^n
    \label{eqn:Psiseries}
\end{equation}
such that 
\begin{equation}
    \Psi_t(\lambda) = \frac{1}{a_t}\Phi_t(\lambda b_t/\bar{m}).
\end{equation}
Let $\tilde{\phi}_n(t):=\tilde{\phi}^0_n(t)$ be as defined in \eqref{eqn:phitilde}
with $X\equiv 0$.
Define
\begin{equation}
  \tilde{\Phi}_t(\lambda) = \lambda\sum_{n=0}^\infty \tilde{\phi}_n(t)\,\lambda^n.
    \label{eqn:Phiexpansion}
\end{equation}
Using \eqref{eqn:psiandphirelation} in \eqref{eqn:Psiseries}, we get 
\begin{equation}
    \Phi_t(\lambda) = \int_0^t \dd s\, \tilde{\Phi}_s(\lambda).
    \label{eqn:Psiasintegral}
\end{equation}
Using \eqref{eqn:tildephirec} in \eqref{eqn:Phiexpansion}, we obtain the following integral recursion,
\begin{equation}
   \tilde{\Phi}_t(\lambda) = \lambda \,m(t,0) +\lambda \int_{0}^t \dd s\, p_{t-s}(0,0)\,\tilde{\Phi}_s(\lambda)
    \label{eqn:tildePhirecursion}
\end{equation}
And from \eqref{eqn:Psiasintegral} we have 
\begin{equation}
  {\Phi}_t(\lambda) = \lambda \int_0^t\dd s\,m(s,0) +\lambda \int_{0}^t \dd s\, p_{t-s}(0,0)\,{\Phi}_s(\lambda)
    \label{eqn:Phirecursion}
\end{equation}
We prove Lemma~\ref{lm:DtCombined} separately in the three cases $d=1$, $d=2$, $d\ge 3$. 
\paragraph{Case $\mathbf{d=1}$:} Using the local limit theorem (see \cite[Chapter~II.7, Proposition~9]{S1964}), there is $K\ge 1$ such that 
$$p_u(0,0)\le \frac{K}{\sqrt{\pi u}} \text{ for all } u > 0.$$
We next show by induction that for all $n\geq 1$,
\begin{equation}
    \frac{\phi_n(t)}{t^{(n-1)/2}}\le \frac{K^{n-1}\phi_1(t)}{\Gamma\left(\frac{n+1}{2}\right)}.
    \label{eqn:inductionbound}
\end{equation}
When $k=1$ \eqref{eqn:inductionbound} holds with equality. Also note that, $\phi_1(t) = \int_0^t m(s,0)\,\dd s$ is monotonically increasing in $t$. Assume \eqref{eqn:inductionbound} holds for $k=n$ and using \eqref{eqn:phinintrec}, 
\begin{align*}
    \frac{\phi_{n+1}(t)}{t^{n/2}} &\le \frac{K^n}{\sqrt{\pi}t^{\frac{n}{2}}\Gamma\left(\frac{n+1}{2}\right)}\int_0^t \dd s\, \frac{1}{\sqrt{t-s}}s^{(n-1)/2}\phi_1(s).\\
    &\le \frac{K^n \phi_1(t)}{\sqrt{\pi}\Gamma\left(\frac{n+1}{2}\right)}\int_0^t\dd u\, (1-u)^{-1/2}u^{(n-1)/2}\\
    &= \frac{K^n \phi_1(t)}{\Gamma\left(\frac{n+2}{2}\right)}.   \end{align*}
Thus \eqref{eqn:inductionbound} holds for $k = n+1$. Using \eqref{eqn:inductionbound} and Assumption~\ref{list:A3}, on the asymptotic behaviour of $\int_0^tm(s,0)\,\dd s$, we can show that for any $C>1$ there exists $K< \infty$ and $t_0\ge 0$ such that 
\begin{equation}
   \frac{\phi_n(t)}{t^{(n-1)/2 + \alpha}L(t)} \le \frac{C\bar{m}K^{n-1}}{\Gamma\left(\frac{n+1}{2}\right)} \quad\text{for all $t\ge t_0, n\ge 1$.}
    \label{eqn:psinbounded}
\end{equation}
Next, we will again use induction to show that  for all $n\geq 1$,
\begin{equation}
    \lim_{t\to \infty}\frac{\phi_n(t)}{t^{\frac{n-1}{2}+\alpha}L(t)} = \frac{\bar{m}\Gamma(\alpha + 1)}{\Gamma\left(\frac{n+1}{2}+\alpha\right)(\sqrt{2})^{n-1}}.
    \label{eqn:phindim1limit}
\end{equation}
Note that, \eqref{eqn:phindim1limit} trivially holds for $k=1$ from the Assumption~\ref{list:A3}. Let $\hat{\tilde{\phi}}_{n}(\eta)= \int_0^\infty e^{-\eta t} {\tilde{\phi}}_{n}(t)\,\dd t,$ for $\eta>0$, be the laplace transform of $\tilde{\phi}_n(t)$ for $n\ge 0$. Suppose \eqref{eqn:phindim1limit} holds for $k = n$, using \eqref{eqn:psiandphirelation} and Tauberian theorem (see \cite[Chapter~XIII.5, Theorem~2]{F1971}), we have
\begin{equation}
    \lim_{\eta \to 0^+}\frac{\eta^{\frac{n-1}{2}+\alpha}}{L(1/\eta)}\hat{\tilde{\phi}}_{n-1}(\eta) = \bar{m}\Gamma(\alpha+1)\left(\frac{1}{\sqrt{2}}\right)^{n-1}.
    \label{eqn:phitildelapasym}
\end{equation}
Taking Laplace transform of \eqref{eqn:tildephirec} for $X\equiv 0$ on both sides, we get
$$\hat{\tilde{\phi}}_n(\eta) = \hat{p}(\eta)\hat{\tilde{\phi}}_{n-1}(\eta),$$
where $\hat{p}(\eta)$ is defined as in \eqref{eqn:Laplacekappazero}. From the local limit theorem (see \cite[Chapter~II.7, Proposition~9]{S1964}), we have $\hat{p}(\eta) = \frac{1}{\sqrt{2\eta}}(1+o(1))$ as $\eta\to 0^+$. This along with \eqref{eqn:phitildelapasym}, we have
$$\lim_{\eta \to 0^+}\frac{\eta^{\frac{n}{2}+\alpha}}{L(1/\eta)}\hat{\tilde{\phi}}_{n}(\eta) = \bar{m}\Gamma(\alpha+1)\left(\frac{1}{\sqrt{2}}\right)^{n}.$$
Again using Tauberian theorem (see \cite[Chapter~XIII.5, Theorem~2]{F1971}), we get 
$$ \lim_{t\to \infty}\frac{\phi_{n+1}(t)}{t^{\frac{n}{2}+\alpha}L(t)} = \frac{\bar{m}\Gamma(\alpha + 1)}{\Gamma\left(\frac{n+2}{2}+\alpha\right)(\sqrt{2})^{n}}.$$
Thus, \eqref{eqn:phindim1limit} holds for $k = n+1$ as desired. Using \eqref{eqn:psinbounded} and \eqref{eqn:phindim1limit}, and applying dominated convergence theorem in \eqref{eqn:Psitlambdaseries}, we get \eqref{eqn:Psi_combined} for $d=1$. Moreover from  \eqref{eqn:Psitlambdaseries} and \eqref{eqn:phindim1limit}, we have
$$\Psi''_t(0) = \frac{2\phi_2(t)}{\bar{m}^2 t^{1/2 + \alpha}L(t)} \rightarrow \frac{\sqrt{2}\Gamma(\alpha+1)}{\Gamma(\alpha+\frac{3}{2})\bar{m}} \quad \text{as } t\to \infty,$$
which completes the proof of Lemma~\ref{lm:DtCombined} in $d=1$.
\paragraph{Case $\mathbf{d=2}$:} 
In dimension $d=2$, for $t\geq 0$,
$$\Psi_t(\lambda) = \frac{1}{a_t}\Phi_t\left(\frac{\lambda}{\bar{m}\log t}\right).$$
Note that $\Phi_t(\lambda)$ is non-negative and non-decreasing for $\lambda\ge 0$ and non-positive and non-increasing for $\lambda\le 0$.  We will now provide upper and lower bounds for $\limsup_{t\to \infty}\Psi_t(\lambda)$ and $\liminf_{t\to \infty}\Psi_t(\lambda),$ respectively. For the upper bound, from \eqref{eqn:Phirecursion} we have
\begin{align}
    \Phi_t(\lambda) \le \lambda\int_0^t \dd s\, m(s,0) + \lambda\Phi_t(\lambda)G_t(0,0)
    \label{eqn:d2upperbound}
\end{align}
and hence
$$\Phi_t(\lambda) \le \frac{\lambda\int_0^t \dd s\, m(s,0)}{1- \lambda G_t(0,0)}, \quad \text{if } \lambda < \frac{1}{G_t(0,0)}.$$
Therefore
\begin{equation}
\Psi_t(\lambda) \le \frac{\log t}{t^\alpha L(t)}\frac{\lambda }{\bar{m}\log t(1 - \lambda\frac{G_t(0,0)}{\bar{m}\log t})}, \quad \text{if }\lambda< \frac{\bar{m}\log t}{G_t(0,0)}.
\label{eqn:Plusineq}
\end{equation}
We use the following random walk asymptotics (see \cite[Chapter~II.7, Proposition~9]{S1964}):
\begin{equation}
\lim_{t\to \infty}\frac{1}{t}p_t(0,0) = \lim_{t\to \infty}\frac{G_t(0,0)}{\log t} = \frac{1}{\pi},
\label{eqn:d2rwlclt}
\end{equation}
where $G_t(0,0) = \int_0^t p_s(0,0)\,\dd s$.
Using \eqref{eqn:d2rwlclt} in \eqref{eqn:Plusineq}, we have
\begin{equation}
    \limsup_{t\to \infty}\Psi_t(\lambda) \le \frac{\lambda \pi \bar{m}}{\pi\bar{m} - \lambda}, \quad \text{if }\lambda< \pi\bar{m}.
    \label{eqn:d2limsup}
\end{equation}
For the lower bound, we consider $\lambda\ge 0$ and $\lambda < 0$ separately. Let $0<q<1$, $\bar{q} = 1-q$. For both $\lambda\ge 0$ and $\lambda<0$, using \eqref{eqn:Phirecursion},
\begin{align}
     {\Phi}_t(\lambda) &\ge \lambda \int_0^t\dd s\,m(s,0) +\lambda \int_{0}^{\bar{q}t} \dd s\, p_{s}(0,0)\,{\Phi}_{t-s}(\lambda)\nonumber\\
     &\ge \lambda \int_0^t\dd s\,m(s,0) +\lambda \Phi_{qt}(\lambda)G_{\bar{q}t}(0,0).\label{eqn:d2lowerbound}
\end{align}
For $\lambda\ge 0$, iterating \eqref{eqn:d2lowerbound} $N$ times, for some $N\ge 1$, we get,
$$\Phi_t(\lambda) \ge \lambda \int_0^t\dd s\,m(s,0) +\lambda \sum_{n=1}^N \lambda^n\left(\int_0^{q^n t}\dd s\, m(s,0)\right)\prod_{m=0}^{n-1}G_{\bar{q}q^m t}(0,0),$$
and hence
$$\Psi_t(\lambda)\ge \frac{\lambda}{\bar{m}t^\alpha L(t)}\int_0^t m(s,0)\dd s + \frac{\lambda}{\bar{m}t^\alpha L(t)}\sum_{n=1}^N \left(\frac{\lambda}{\bar{m}}\right)^n\left(\int_0^{q^n t}\dd s\, m(s,0)\right)\prod_{m=0}^{n-1}\frac{G_{\bar{q}q^m t}(0,0)}{\log t}.$$
Using \ref{list:A3} and \eqref{eqn:d2rwlclt}, we have
$$\liminf_{t\to \infty}\Psi_t(\lambda) \ge \lambda \sum_{n=0}^N \left(\frac{\lambda q^\alpha}{\pi\bar{m}}\right)^n.$$
We then let $N\to \infty$, followed by $q\to 1$ to conclude that for $0\le \lambda < \pi \bar{m}$,
\begin{equation}
    \liminf_{t\to \infty}\Psi_t(\lambda)\ge  \frac{\lambda \pi \bar{m}}{\pi\bar{m} - \lambda}.
    \label{eqn:d2liminfpos}
\end{equation}
For $\lambda <0$, using \eqref{eqn:d2upperbound} in \eqref{eqn:d2lowerbound}, we get 
$$\Phi_t(\lambda) \ge \lambda\int_0^t m(s,0)\,\dd s + \lambda^2 G_{\bar{q}t}(0,0)\int_0^{qt}m(s,0)\,\dd s+ \lambda^2 G_{\bar{q}t}(0,0)G_{qt}(0,0)\Phi_t(\lambda).$$
Thus we obtain
$$\Phi_t(\lambda) \ge \frac{\lambda\int_0^t m(s,0)\,\dd s + \lambda^2 G_{\bar{q}t}(0,0)\int_0^{qt}m(s,0)\,\dd s}{1- \lambda^2 G_{\bar{q}t}(0,0)G_{qt}(0,0)},$$
for $\lambda^2 > \frac{1}{G_{\bar{q}t}(0,0)G_{qt}(0,0)}$. Therefore
$$\Psi_t(\lambda) \ge \frac{\frac{\lambda}{\bar{m}t^\alpha L(t)}\int_0^t m(s,0)\,\dd s + \frac{\lambda^2}{\bar{m}^2t^\alpha L(t)}\frac{ G_{\bar{q}t}(0,0)}{\log t}\int_0^{qt}m(s,0)\,\dd s}{1- \frac{\lambda^2}{\bar{m}^2} \frac{G_{\bar{q}t}(0,0)}{\log t}\frac{G_{qt}(0,0)}{\log t}},$$
for $\lambda^2 > \frac{\bar{m}^2(\log t)^2}{G_{\bar{q}t}(0,0)G_{qt}(0,0)}$. Taking limit $t\to \infty$, using \ref{list:A3} and \eqref{eqn:d2rwlclt}, we have
$$\liminf_{t\to \infty}\Psi_t(\lambda) \ge \frac{\lambda + \frac{\lambda^2 q^\alpha}{\bar{m}\pi}}{1-\frac{\lambda^2}{(\bar{m}\pi)^2}} \quad \text{for $-\pi \bar{m}< \lambda \le 0$}.$$
Allowing $q\to 1$, we obtain
\begin{equation}
    \liminf_{t\to \infty}\Psi_t(\lambda)\ge  \frac{\lambda \pi \bar{m}}{\pi\bar{m} - \lambda}\quad \text{for $-\pi \bar{m}< \lambda \le 0$}.
    \label{eqn:d2liminfneg}
\end{equation}
Therefore from\eqref{eqn:d2limsup}, \eqref{eqn:d2liminfpos}
and \eqref{eqn:d2liminfneg} proves we have \eqref{eqn:Psi_combined} for $d=2$. By a similar analysis of $\phi_2(t)$ in $d=2$ as done for $\phi_n(t)$ in $d=1$, yields \eqref{eqn:Psi_second_combined} for $d=2$.
\paragraph{Case $\mathbf{d\ge 3}$:} We have 
$$\Psi_t(\lambda) = \frac{1}{t^\alpha L(t)}\Phi_t(\lambda/\bar{m}).$$
Denote the Laplace transform of $\tilde{\Phi}_t(\lambda), \,p_{t}(0,0) $ and $m(t,0)$ by 
\begin{equation}
    \hat{\tilde{\Phi}}_\eta(\lambda)= \int_0^\infty  e^{-\eta t} \tilde{\Phi}_t(\lambda)\,\dd t\,,\quad \hat{p}(\eta) = \int_0^\infty e^{-\eta t} p_t(0,0)\, \dd t\,, \quad \hat{m}(\eta) = \int_0^t  e^{-\eta t} m(t,0)\, \dd t,
\label{eqn:Laplace}
\end{equation}
for $\eta>0$. Applying Laplace transform over \eqref{eqn:tildePhirecursion}
and solving for $\hat{\tilde{\Phi}}_\eta(\lambda)$, we obtain
\begin{equation}
    \hat{\tilde{\Phi}}_\eta(\lambda) = \frac{\lambda\hat{m}(\eta)}{1-\lambda\hat{p}(\eta)}.
    \label{eqn:tildePhiHatexpression}
\end{equation}
From \eqref{eqn:phatlambda}, we have $\hat{p}(\eta) = G_d(0)(1+o(1))$ as $\eta\to 0^+$, where $G_d(0) = \int_0^\infty p_t(0,0)\dd t$. Using Tauberian theorem (see \cite[Chapter~XIII.5, Theorem~2]{F1971}) for the Assumption~\ref{list:A3} on $\int_0^t m(s,0)\,\dd s$ gives $\hat{m}(\eta) = \Gamma(\alpha +1)\bar{m}\eta^{-\alpha}L(1/\eta)(1+o(1))$ as $\eta \to 0^+$. Thus for $0\le\lambda\le (G_d(0))^{-1}$, we have
$$\hat{\tilde{\Phi}}_\eta(\lambda) = \frac{\lambda \bar{m}\Gamma(\alpha +1)\eta^{-\alpha}L(1/\eta)}{1-\lambda G_d(0)}(1+o(1))\quad \text{as } \eta \to 0^+.$$
Using Tauberian theorem, we have for $0\le\lambda\le (G_d(0))^{-1}$,
\begin{equation}
    \lim_{t\to \infty}\frac{\Phi_t(\lambda)}{t^{\alpha}L(t)} = \frac{\bar{m}\lambda}{1-\lambda G_d(0)}.
    \label{eqn:d3limit}
\end{equation}
Similarly for $\lambda<0$, we can follow the above procedure for $-\tilde{\Phi}_t(\lambda)$ and obtain \eqref{eqn:d3limit}. Therefore we have
$$\Psi(\lambda) = \lim_{n\to \infty}\Psi_t(\lambda) = \frac{\lambda}{1-\frac{\lambda}{\bar{m}} G_d(0)}\quad \text{ for } -\infty< \lambda \le \frac{\bar{m}}{G_d(0)}.$$
By a similar analysis of $\phi_2(t)$ in $d=3$ as done for $\phi_n(t)$ in $d=1$, yields \eqref{eqn:Psi_second_combined} for $d=3$.
\end{proof}

We conclude this section with the proof of Theorem~\ref{thm:Nt}.
\begin{proof}[Proof of Theorem~\ref{thm:Nt}]
    Denote $$\theta = \begin{cases}
        \frac{\sqrt{2}\Gamma(\alpha+1)}{\Gamma(\alpha +\frac{1}{2})} & \text{ if }d=1,\\
        \pi & \text{ if }d=2,\\
        \frac{1}{G_d(0)} & \text{ if }d\ge 3,
    \end{cases}$$
    and set $Z_t = \frac{N_t}{\kappa \bar{m}}$. Then,
    $$\Psi_t(\lambda) = \frac{1}{a_t}\log\E^\xi\left[\exp\left\{\frac{\lambda}{\theta \bar{m}}\sum_{y\in\Z^d}\sum_{1\le j\le N_y}\1_{\{\tau^{y,j}_0\le t\}}\right\}\right],$$
    where $\tau^{y,j}_0 = \inf\{s\ge 0: Y^{y}_j(s) = 0\}$ and $Y_j^y := (Y_j^y (s))_{s \geq 0}$ denotes the $j$-th continuous time simple symmetric random walk with jump rate $1$ on $\Z^d$ that starts at $y$ at time $0$. Let $\tau_0 := \inf\{s\ge 0: Y(s) = 0\}$, where $Y := (Y(s))_{s \geq 0}$ denotes a continuous time simple symmetric random walk with jump rate $1$ on $\Z^d$. Integrating out the Poisson field $\xi$, we get
    \begin{align}
        \Psi_t(\lambda) &= \frac{1}{a_t}\sum_{y\in \Z^d}\nu_y \left[\E^Y_y\left[e^{\frac{\lambda}{\kappa \bar{m}}\1_{\{\tau_0\le t\}}}\right] - 1\right],\nonumber\\
        &= \frac{(e^{\frac{\lambda}{\theta\bar{m}}}-1)}{a_t}\sum_{y\in \Z^d}\nu_y \pr^Y_y(\tau_0\le t),\nonumber\\
        &= \sum_{n=1}^\infty \frac{\psi_n(t)}{n!}\lambda^n,\nonumber
    \end{align}
where $\psi_n(t):= \frac{\sum_{y\in \Z^d}\nu_y \pr^Y_y(\tau_0\le t)}{a_t(\theta\bar{m})^n}$. Evidently, $\Psi_t'(\lambda)$ is convex. From the computation of $\kappa=0, \gamma = \infty$ solution for annealed survival probability (See proof of Proposition~\ref{lm:kappa0} for $\gamma=\infty$), we have $$\sum_{y\in \Z^d}\nu_y \pr^Y_y(\tau_0\le t) = \theta \bar{m}a_t(1+o(1)) \quad\text{ as } t\to \infty.$$
Consequently, 
$$\Psi(\lambda):= \lim_{t\to \infty}\Psi_t(\lambda) =  \theta\bar{m}(e^{\frac{\lambda}{\theta\bar{m}}}-1).$$
Therefore, Hypothesis~\ref{list:H1} holds for $\lambda_- = -\infty, \lambda_+ = +\infty$, with $\mu = 1, \sigma^2= \frac{1}{\theta\bar{m}},\beta_- = 0$ and $\beta_+ = +\infty$. Also, 
$$\lim_{t \to \infty}\Psi''_t(0) = \lim_{t\to\infty}\psi_2(t) = \frac{1}{\theta\bar{m}} = \sigma^2,$$
satisfying the additional condition required for central limit theorem in Lemma~\ref{lm:largedeviations}. A routine evaluation of $I_N(\beta)$ gives $I_N(\beta) = \theta\bar{m}(\beta\log \beta - (\beta-1))$, which proves Theorem~\ref{thm:Nt}.
\end{proof}

\section{Examples Revisited}
\label{sec:exrev}
In this section, we revisit the examples stated earlier in Section~\ref{subsec:examples} and verify that they satisfy Assumption~\ref{list:A2}. Recall that for $x\in \Z^d, t\ge 0$, 
\begin{equation*}
m(t,x) := \E[\xi(t,x)]= \sum_{y\in\mathbb{Z}^d} \nu_y\,\pr_y^Y(Y(t)=x),
\end{equation*}

 \begin{lemma}
  The Poisson means $\nu_y:\Z^d \to [0,\infty)$, given by
   \begin{equation}
    \nu_y = \left\{\begin{array}{l}
      \nu_1 \quad \text{if} \quad y \in A,\\
      \nu_2  \quad \text{otherwise},
    \end{array}\right.
  \end{equation}
  where $A = \left\{ x = (x_1, \ldots, x_d) \in \mathbb{Z}^d :\;
  \sum_{i = 1}^d x_i = 2 k + 1 \;\text{for}\;
  \text{some}\;k \in \mathbb{Z} \right\},$ $\nu_1 \geq \nu_2$ and $\nu_1>0$, 
 satisfy \ref{list:A2} and $\lim_{t\to\infty}\frac{1}{t}\int_0^t m(s,0)\,\dd s = \frac{\nu_1 + \nu_2}{2}$.
  \label{clm:periodiccase}
  \end{lemma}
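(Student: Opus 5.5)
Write $\nu_y = \frac{\nu_1+\nu_2}{2} + \frac{\nu_1-\nu_2}{2}\,\varepsilon_y$, where $\varepsilon_y = -(-1)^{y_1+\cdots+y_d}$. Thus $\varepsilon_y=1$ on $A$ and $\varepsilon_y=-1$ on $A^c$. By linearity of \eqref{eqn:poissonmean},
\[
m(t,x) = \frac{\nu_1+\nu_2}{2} + \frac{\nu_1-\nu_2}{2}\sum_{y}\varepsilon_y\, p_t(y,x),
\]
since $\sum_y p_t(y,x)=1$. The plan is to compute $h(t,x):=\sum_y (-1)^{\|y\|_1}p_t(y,x)$ explicitly. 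I will use the rate-1 continuous-time simple random walk. Each jump changes the parity of $\|Y\|_1$, so $(-1)^{\|Y(t)\|_1}=(-1)^{\|Y(0)\|_1}(-1)^{J_t}$, with $J_t\sim$ Poisson$(t)$ the number of jumps. Using the symmetry $p_t(y,x)=p_t(x,y)$, this gives
\[
h(t,x) = \E_x^Y\bigl[(-1)^{\|Y(t)\|_1}\bigr] = (-1)^{\|x\|_1}\,\E[(-1)^{J_t}] = (-1)^{\|x\|_1} e^{-2t}.
\]
It follows that
\[
m(t,x)=\frac{\nu_1+\nu_2}{2} - (-1)^{\|x\|_1}\frac{\nu_1-\nu_2}{2}e^{-2t}.
\]

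To check \ref{list:A2}, note that $m(t,0)=\frac{\nu_1+\nu_2}{2}-\frac{\nu_1-\nu_2}{2}e^{-2t}$. For every $x$ we have $-(-1)^{\|x\|_1}\ge -1$, and $\nu_1\ge\nu_2$, so $m(t,x)\ge m(t,0)$. The one point needing care is the interchange of sum and expectation. It is justified by absolute convergence, because $|\varepsilon_y|\le1$ and $\sum_y p_t(y,x)=1$.

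For the Cesàro limit I will integrate directly:
\[
\frac1t\int_0^t m(s,0)\,\dd s=\frac{\nu_1+\nu_2}{2}-\frac{\nu_1-\nu_2}{4t}\bigl(1-e^{-2t}\bigr)\to\frac{\nu_1+\nu_2}{2}.
\]
Positivity of this limit follows from $\nu_1>0$. The main (mild) obstacle is the parity identity $\E[(-1)^{J_t}]=e^{-2t}$. It follows from the Poisson generating function $\E[s^{J_t}]=e^{t(s-1)}$ evaluated at $s=-1$, together with the fact that every nearest-neighbour step flips the parity of the coordinate sum. If the jump-rate convention were different, only the exponent $2t$ would change, and neither conclusion would be affected.
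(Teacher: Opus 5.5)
Your proof is correct and is essentially the paper's argument: both obtain the explicit formula $m(t,x)=\frac{\nu_1+\nu_2}{2}-(-1)^{x_1+\cdots+x_d}\frac{\nu_1-\nu_2}{2}e^{-2t}$, read off \ref{list:A2} from it (equality for $x\notin A$, excess $(\nu_1-\nu_2)e^{-2t}$ for $x\in A$), and integrate $m(s,0)$ to get the Ces\`aro limit. The only difference is that you compute the parity sum $\sum_{y\in A}p_t(y,0)=\frac12-\frac12e^{-2t}$ from the parity of the Poisson jump count, $\E[(-1)^{J_t}]=e^{-2t}$, while the paper appeals to the Fourier representation of $p_t$, so your route is slightly more elementary and self-contained.
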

\begin{proof}
We have
  \begin{equation}
    m(t,x) = \left\{\begin{array}{l}
      \nu_1 \sum_{{y \in A^c} } p_t(y,0) +
      \nu_2 \sum_{y \in A } p_t(y,0) \quad
      \text{if} \quad x \in A,\\
      \nu_1 \sum_{y \in A } p_t(y,0)+ \nu_2
      \sum_{{y \in A^c} } p_t(y,0)\quad
      \text{if} \quad x \not\in A
    \end{array}\right.
    \label{eqn4.22}
  \end{equation}
  and
  \begin{equation}
    m(t,0) =
    \nu_1 \sum_{y \in A } p_t(y,0)+ \nu_2
    \sum_{{y \in A^c} } p_t(y,0).
    \label{4.23}
  \end{equation}
  The expressions for $\sum_{{y \in A^c} } p_t(y,0)$ and $\sum_{{y \in A } } p_t(y,0)$ can
  be computed explicitly using Fourier representation of the transition kernel $p_t(y,0)$ and it is given by
  \begin{equation}
      \sum_{{y \in A^c} } p_t(y,0) = \frac{1}{2}+ \frac{1}{2}e^{-2t} \quad \text{ and }\quad 
      \sum_{{y \in A} } p_t(y,0) = \frac{1}{2}- \frac{1}{2}e^{-2t}.
      \label{eqn:partition}
  \end{equation}
  From \eqref{eqn4.22} and \eqref{4.23},
  \[ m(t,x) = m(t,0) \quad \text{whenever} \quad x \not\in A \]
  and whenever $x \in A$,
  \[ m(t,x) - m(t,0) = (\nu_1 - \nu_2)e^{-2t} \geq 0. \]
  Therefore, \ref{list:A2} holds. We are left to compute $\bar m$. Note that,
\begin{equation}
    m(t,0) =
    \frac{\nu_1 + \nu_2}{2} - \frac{\nu_1-\nu_2}{2}e^{-2t}
    \label{eqn:mt0}
  \end{equation}
  Thus, 
  $$\lim_{t\to\infty}\frac{1}{t}\int_0^t m(s,0)\, \dd s = \frac{\nu_1+\nu_2}{2}.$$
  \end{proof}

   \begin{lemma}
  The Poisson means $\nu_y:\Z \to [0,\infty)$ given by
   $$\nu_y = \frac{|y|^\beta}{1+|y|^\beta},\quad \text{ for some }\beta>1,$$
 satisfy \ref{list:A2} and and $\lim_{t\to\infty}\frac{1}{t}\int_0^t m(s,0)\,\dd s = 1$.
  \label{clm:polycase}
  \end{lemma}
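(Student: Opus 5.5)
The approach is to write $\nu_y = 1 - w_y$ with $w_y := \frac{1}{1+|y|^\beta}$, so that $w_0=1$, $0<w_y\le 1$, and $\sum_{y\in\Z} w_y<\infty$ since $\beta>1$. Because $\sum_y p_t(y,x)=1$, we get
\begin{equation*}
m(t,x) = 1 - \sum_{y\in\Z} w_y\, p_t(y,x) =: 1 - u(t,x).
\end{equation*}
Assumption \ref{list:A2} is then equivalent to $u(t,x)\le u(t,0)$ for all $t\ge0$ and $x\in\Z$. The limit $\bar m=1$ should follow from $u(t,0)\to 0$, which I get as follows. We have $u(t,0)\le \sum_y w_y\, p_t(y,0)\le p_t(0,0)\sum_y w_y$, using $p_t(y,0)\le p_t(0,0)$ (the Fourier bound already used in the proof of Lemma~\ref{clm:gammafinite}). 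The local limit theorem gives $p_t(0,0)=O(t^{-1/2})$. Hence $\frac1t\int_0^t u(s,0)\,\dd s = O(t^{-1/2})\to0$, and therefore $\frac1t\int_0^t m(s,0)\,\dd s\to 1$.

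For \ref{list:A2}, the key observation is that $w$ is symmetric and nonincreasing in $|y|$, i.e.\ $w$ is symmetric unimodal on $\Z$. The same holds for the kernel $p_t(\cdot)$ of the continuous-time simple symmetric random walk. So the plan is to show that the convolution $u(t,\cdot)=w*p_t$ of two symmetric unimodal functions on $\Z$ is again symmetric unimodal, and in particular is maximized at $0$. To do this, I would decompose $w$ as a nonnegative combination of indicators of symmetric intervals:
\begin{equation*}
w_y=\sum_{k\ge0}(w_k-w_{k+1})\1_{\{|y|\le k\}}.
\end{equation*}
This is valid since $w_k\downarrow 0$, and all coefficients are nonnegative. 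By linearity it then suffices to show that $x\mapsto \sum_{|y|\le k}p_t(y,x)=\pr_x(Y(t)\in[-k,k])$ is maximized at $x=0$ and is symmetric nonincreasing in $|x|$.

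That last fact is where the real work lies. It follows if $p_t$ is symmetric and nonincreasing in $|z|$. Indeed, for $x\ge0$,
\begin{equation*}
\pr_x(Y(t)\in[-k,k])-\pr_{x+1}(Y(t)\in[-k,k]) = p_t(x+k+1)-p_t(x-k),
\end{equation*}
and this is $\ge 0$ because $|x-k|\le x+k+1$. Monotonicity of $p_t(z)$ in $|z|$ is the expected main obstacle. I would prove it via the explicit representation $p_t(z)=e^{-t}I_{|z|}(t)$ (modified Bessel functions), using that $I_n(t)$ is decreasing in $n$ for $t>0$. Alternatively, one can write $p_t=\sum_n e^{-t}\frac{t^n}{n!}\mu^{*n}$ and argue directly: the $n$-step simple random walk distribution is unimodal on its parity class, but parity causes trouble, so the direct route needs care. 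That is why I would go through the Bessel function monotonicity, which can be checked from the recursion $I_{n-1}-I_{n+1}=\frac{2n}{t}I_n>0$ together with log-concavity, or simply cited.

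Combining the steps: $u(t,x)\le u(t,0)$ gives $m(t,x)\ge m(t,0)$, which is \ref{list:A2}, and the Cesàro limit computed above gives $\bar m = 1$.
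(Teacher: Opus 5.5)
Your proposal is correct and is essentially the paper's proof. It uses the same splitting $m(t,x)=1-\sum_y w_y\,p_t(y,x)$, the same layer-cake decomposition of $w$ into indicators of symmetric intervals, and the same reduction to monotonicity of $p_t(0,\cdot)$ in $|z|$; the paper simply asserts that monotonicity, while you justify it via $p_t(0,z)=e^{-t}I_{|z|}(t)$. For $\bar m=1$, your uniform bound $p_t(y,0)\le p_t(0,0)=O(t^{-1/2})$ replaces the paper's dominated-convergence argument. One slip: your displayed difference has its sign reversed. It equals $p_t(x-k)-p_t(x+k+1)$, matching the paper's $p_t(0,x-r)-p_t(0,x+r+1)$, and it is this quantity that $|x-k|\le x+k+1$ shows to be nonnegative.
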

\begin{proof}
 We have
    $$m(t,x) = \sum_{y\in \Z}\nu_y p_t(y,x) = 1- \sum_{y\in \Z}\frac{1}{1+|y|^\beta}p_t(y,x)\quad\text{ for } x\in \Z^d.$$
    To verify if \ref{list:A2} holds, it suffices to show that for all $x\in \Z$,
    $$\sum_{y\in \Z}\frac{1}{1+|y|^\beta}p_t(y,0) \ge \sum_{y\in \Z}\frac{1}{1+|y|^\beta}p_t(y,x).$$
    Denote $k(t,x) = \sum_{y\in \Z}h(y)p_t(y,x)$ where $h(y) = \frac{1}{1+|y|^\beta}$. By the symmetry of $h$ and the transition kernel, $k(t,x)$ is an even function of $x$. Since $h$ is an even function, non-increasing on $[0,\infty)$ and $\lim\limits_{y\to \infty}h(y) = 0$, we can write 
    $$h(y) = \sum_{r\ge \lvert y\rvert}(h(r) - h(r+1)),$$ where $h(r) - h(r+1) \ge 0 \, \text{ for } r\ge 0$.
    Now for $x\ge 0$, 
    \begin{align*}
        k(t,x) &= \sum_{y\in \Z}\sum_{r\ge 0}(h(r)-h(r+1))\1_{\{\lvert y \rvert\le r\}}p_t(0, x-y)\\
        &= \sum_{r\ge 0}(h(r)-h(r+1))\sum_{y = -r}^r p_t(0, x-y)\\
        &= \sum_{r\ge 0}(h(r)-h(r+1))\sum_{y = x-r}^{x+ r} p_t(0, y).
    \end{align*}
    Therefore,
    $$k(t,x) - k(t,x+1) = \sum_{r\ge 0}(h(r)-h(r+1))(p_t(0, x-r) - p_t(0,x+r+1)).$$
    Note that for $r\ge 0, x\ge 0$, $\lvert x-r\rvert\le |x+r+1|$. Since $p_t(0, \cdot)$ is even and non-increasing on $\N\cup \{0\}$, it follows that $p_t(0, x-r) - p_t(0,x+r+1)\ge 0$. Thus, 
    $$k(t,x) \ge k(t,x+1).$$ Because $k(t,x)$ is even, this shows that $k(t,x)$ attains maximum at $x=0$. Equivalently,
    $$\sum_{y\in \Z}\frac{1}{1+|y|^\beta}p_t(y,0) \ge \sum_{y\in \Z}\frac{1}{1+|y|^\beta}p_t(y,x),$$ for all $x\in \Z$ and hence $m(t,0)\le m(t,x)$ for all $x\in \Z$. Therefore \ref{list:A2} holds. Now,
    $$\int_0^t m(s,0)\, \dd s = t - \sum_{y\in \Z}\frac{1}{1+|y|^\beta}G_t(y,0),$$
    where $G_t(w,z) = \int_0^t p_s(w,z)\,\dd s$. For the continuous time simple random walk with jump rate $1$ on $\Z$, $\lim_{t\to \infty}\frac{G_t(x,y)}{t^{1/2}} = \sqrt{\frac{2}{\pi}}$. Since $\frac{G_t(y,0)}{t}\le 1$ and $\frac{1}{1+|y|^\beta}$ is integrable whenever $\beta>1$, by Dominated Convergence Theorem we get
    $$\lim_{t\to\infty}\frac{1}{t}\int_0^t m(s,0)\, \dd s = 1.$$
\end{proof}

\begin{lemma}
Let $d=1$. For $y\in\mathbb{Z}$, define
$$
\nu_y=\frac{1}{1+|y|^\beta},
\quad 0<\beta<1.
$$
Then $(\nu_y)_{y\in\mathbb{Z}}$ satisfies \ref{list:A3} with
$\alpha=1-\frac{\beta}{2}, L(\cdot)\equiv 1,$ and $\bar m =\frac{2^{1-\beta/2}\Gamma\left(1-\frac{\beta}{2}\right)}{(2-\beta)\sqrt{\pi}}.$
\label{clm:A3example}
\end{lemma}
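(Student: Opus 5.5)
The plan is to show that $m(s,0)$ is regularly varying with $m(s,0)\sim c_\beta\, s^{-\beta/2}$ as $s\to\infty$, and then integrate. By symmetry of the kernel, $m(s,0)=\sum_{y}\nu_y p_s(y,0)=\E^Y_0[\nu_{Y(s)}]$, where $Y$ is a rate-one simple symmetric random walk started at $0$. Multiplying by $s^{\beta/2}$ gives
$$s^{\beta/2}m(s,0)=\E^Y_0\Big[\frac{1}{s^{-\beta/2}+|Y(s)/\sqrt s|^{\beta}}\Big].$$
By the central limit theorem, $Y(s)/\sqrt s\to \mathcal N(0,1)$ in distribution. Hence the integrand converges in distribution to $|N|^{-\beta}$ with $N\sim\mathcal N(0,1)$. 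Since $\beta<1$, this limit is integrable, and
$$\E|N|^{-\beta}=\frac{2^{-\beta/2}\Gamma\left(\frac{1-\beta}{2}\right)}{\sqrt\pi}.$$

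The main obstacle is justifying the passage to the limit inside the expectation, because the integrand blows up near $Y(s)=0$. I would split the expectation according to whether $|Y(s)|\le \epsilon\sqrt s$ or $|Y(s)|>\epsilon\sqrt s$.
\begin{itemize}
\item On $\{|Y(s)|>\epsilon\sqrt s\}$ the integrand is bounded by $\epsilon^{-\beta}$. Weak convergence therefore applies, using the standard truncation at a continuity level, since $|N|=\epsilon$ has probability zero.
\item On $\{|Y(s)|\le\epsilon\sqrt s\}$ I would use the local limit bound $p_s(0,y)\le K/\sqrt{s}$ already used in the proof of Lemma~\ref{lm:DtCombined}. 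This gives
$$\sum_{|y|\le\epsilon\sqrt s}\frac{s^{\beta/2}}{1+|y|^\beta}\frac{K}{\sqrt s}\le C K s^{\frac{\beta-1}{2}}\big(1+(\epsilon\sqrt s)^{1-\beta}\big)\le C'\epsilon^{1-\beta}+o(1).$$
The analogous Gaussian piece, $\E[|N|^{-\beta};|N|\le\epsilon]$, is also $O(\epsilon^{1-\beta})$.
\end{itemize}
Letting $s\to\infty$ and then $\epsilon\to0$ yields $s^{\beta/2}m(s,0)\to c_\beta:=\E|N|^{-\beta}$.

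Integrating, for any $\delta>0$ we have $m(s,0)\in[(1-\delta)c_\beta s^{-\beta/2},(1+\delta)c_\beta s^{-\beta/2}]$ for all $s\ge s_\delta$. The contribution of $[0,s_\delta]$ is at most $s_\delta$, because $\nu_y\le 1$ and hence $m(s,0)\le 1$. Since $1-\beta/2>0$, this gives
$$\frac{1}{t^{1-\beta/2}}\int_0^t m(s,0)\,\dd s\to \frac{c_\beta}{1-\beta/2}=\frac{2^{1-\beta/2}\Gamma\left(\frac{1-\beta}{2}\right)}{(2-\beta)\sqrt\pi}.$$
This verifies \ref{list:A3} with $\alpha=1-\beta/2\in(1/2,1]$, as required in $d=1$, and with $L\equiv1$.

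The resulting constant is the value of $\bar m$ quoted in Example~\ref{ex:A3holds}, involving $\Gamma\left(\frac{1-\beta}{2}\right)$. The lemma statement displays $\Gamma\left(1-\frac{\beta}{2}\right)$ instead, so I would flag this as a likely typo and record the Gaussian-moment value above. Assumption \ref{list:A1} holds trivially, since $0<\nu_y\le 1$.
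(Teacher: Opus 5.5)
Your proposal is correct and is essentially the paper's argument. It uses the same representation $s^{\beta/2}m(s,0)=\E^Y_0\bigl[(s^{-\beta/2}+|Y(s)/\sqrt{s}|^{\beta})^{-1}\bigr]$ and the CLT on $\{|Y(s)|>\epsilon\sqrt{s}\}$. There, the $s$-dependence of the integrand costs at most $s^{-\beta/2}\epsilon^{-2\beta}$, which you should note explicitly. The uniform bound $p_s(0,y)\le K/\sqrt{s}$ gives $O(\epsilon^{1-\beta})$ near the origin, and you then integrate the regularly varying $m(s,0)$ by hand where the paper cites Karamata's theorem.

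You are also right about the constant. The limit $\E|N|^{-\beta}/(1-\beta/2)$ gives $\Gamma\left(\frac{1-\beta}{2}\right)$, matching Example~\ref{ex:A3holds}. The $\Gamma\left(1-\frac{\beta}{2}\right)$ in the lemma statement, and in the Karamata step of the paper's own proof, is a typo.
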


\begin{proof}
It is enough to show that 
\begin{equation}
\lim_{t\to\infty}t^{\beta/2}m(t,0) =
\frac{1}{\sqrt{2^\beta\pi}}\Gamma\left(\frac{1-\beta}{2}\right).
\label{eqn:mtlimit}
\end{equation}
Indeed, since $m(t,0)$ is regularly varying with index $-\beta/2$, Karamata's theorem (see \cite{F1971}) yields
$$
\lim_{t\to\infty}\frac{1}{t^{1-\beta/2}}\int_0^t m(s,0)\,ds=
\frac{2^{1-\beta/2}\Gamma\left(1-\frac{\beta}{2}\right)}{(2-\beta)\sqrt{\pi}}.
$$
Thus, it remains to prove \eqref{eqn:mtlimit}.

By the symmetry of the transition kernel $p_t(\cdot,\cdot)$ of the random walk, we can write
$$t^{\beta/2}m(t,0)=
\mathbb{E}_0^Y\left[\frac{1}{t^{-\beta/2}+|Z_t|^\beta}\right],
$$
where
$$
Z_t:=\frac{Y_t}{\sqrt{t}},
$$
and $Y=(Y_t)_{t\geq 0}$ is a continuous-time simple symmetric random walk on $\mathbb{Z}$ with jump rate $1$. By the central limit theorem, we have
$$
Z_t\xrightarrow{d} Z, \quad \text{ as } t\to\infty,
$$
where $Z$ is a standard normal random variable. Hence, it suffices to show that
\begin{equation}
\lim_{t\to\infty}
\mathbb{E}_0^Y
\left[
\frac{1}{t^{-\beta/2}+|Z_t|^\beta}
\right]
=
\mathbb{E}\left[|Z|^{-\beta}\right].
\label{eqn:singular-limit}
\end{equation}
Since for $0<\beta<1$,
\[
\mathbb{E}\left[|Z|^{-\beta}\right]
=
\frac{1}{2^{\beta/2}\sqrt{\pi}}
\Gamma\left(\frac{1-\beta}{2}\right)
<\infty.
\]
Fix $\delta>0$. Splitting according to whether $Z_t=0$, $0<|Z_t|<\delta$, or $|Z_t|\geq\delta$, we obtain
\begin{align*}
\left|
\mathbb{E}_0^Y
\left[
\frac{1}{t^{-\beta/2}+|Z_t|^\beta}
\right]
-
\mathbb{E}\left[|Z|^{-\beta}\right]
\right|
\leq
t^{\beta/2}p_t(0,0)
+
E_1(t,\delta)
+
E_2(t,\delta)
+
E_3(t,\delta),
\end{align*}
where
\begin{align*}
E_1(t,\delta)&:=\mathbb{E}_0^Y\left[\frac{1}{t^{-\beta/2}+|Z_t|^\beta}\1_{\{0<|Z_t|<\delta\}}\right],\\
E_2(t,\delta) &:=\left|\mathbb{E}_0^Y\left[|Z_t|^{-\beta}\1_{\{|Z_t|\geq\delta\}}\right]
-\mathbb{E}\left[|Z|^{-\beta}\mathbf{1}_{\{|Z|\geq\delta\}}\right]\right| \text{ and }\\
E_3(t,\delta)&:=\mathbb{E}_0^Y\left[\frac{t^{-\beta/2}|Z_t|^\beta}{t^{-\beta/2}+|Z_t|^\beta}\1_{\{|Z_t|\geq\delta\}}\right].
\end{align*}
We first estimate $E_1(t,\delta)$. Since
$$
\frac{1}{t^{-\beta/2}+|Z_t|^\beta}\leq |Z_t|^{-\beta}\quad\text{on }\{Z_t\neq0\},
$$
we have
\begin{align*}
E_1(t,\delta)&\leq \mathbb{E}_0^Y\left[|Z_t|^{-\beta}\1_{\{0<|Z_t|<\delta\}}\right]=t^{\beta/2}\sum_{1\leq k\leq\delta\sqrt{t}}k^{-\beta}p_t(0,k).
\end{align*}
For the continuous-time simple symmetric random walk on $\mathbb{Z}$, there exists a constant $c>0$ such that
$$
p_t(0,k)\leq \frac{c}{\sqrt{t}},\quad t>0,\quad k\in\mathbb{Z}.
$$
Consequently,
\begin{align*}
E_1(t,\delta)&\leq c\,t^{\frac{\beta-1}{2}}\sum_{1\leq k\leq\delta\sqrt{t}}k^{-\beta}\leq\frac{c}{1-\beta} t^{\frac{\beta-1}{2}}(\delta\sqrt{t})^{1-\beta}=\frac{c}{1-\beta}\delta^{1-\beta}.
\end{align*}
Next, on $\{|Z_t|\geq\delta\}$,
$$
\frac{t^{-\beta/2}|Z_t|^\beta}{t^{-\beta/2}+|Z_t|^\beta}\leq\frac{t^{-\beta/2}}{\delta^{2\beta}},
$$
and therefore
$$
E_3(t,\delta)\leq \frac{1}{t^{\beta/2}\delta^{2\beta}}.
$$
It remains to control $E_2(t,\delta)$. Fix $n\in\mathbb{N}$ such that $n\geq1/\delta$, and define
$$
f_n^\delta(x) = \begin{cases}
	0, & \text{ if } x\leq\delta-\frac{1}{n},\\[1mm]
	n(x-\delta)+1, & \text{ if }\delta-\frac{1}{n}<x<\delta,\\[1mm]
	1, & \text{ is } x\geq\delta.
	\end{cases}
$$
Then
$$
E_2(t,\delta) \leq D_1(t,n,\delta) + D_2(t,n,\delta) +D_3(n,\delta)+D_4(\delta),
$$
where
\begin{align*}
D_1(t,n,\delta)&:=\left|\mathbb{E}_0^Y\left[|Z_t|^{-\beta}\left(\1_{\{|Z_t|\geq\delta\}}-f_n^\delta(|Z_t|)\right)\right]\right|,\\
D_2(t,n,\delta)&:=\left|\mathbb{E}_0^Y\left[|Z_t|^{-\beta}f_n^\delta(|Z_t|)\right]-\mathbb{E}\left[|Z|^{-\beta}f_n^\delta(|Z|)\right]\right|,\\
D_3(n,\delta)&:=\left|\mathbb{E}\left[|Z|^{-\beta}f_n^\delta(|Z|)\right]-\mathbb{E}\left[|Z|^{-\beta}\1_{\{|Z|\geq\delta\}}\right]\right|, \text{ and }\\
D_4(\delta)&:=\left|\mathbb{E}\left[|Z|^{-\beta}\1_{\{|Z|<\delta\}}\right]\right|.
\end{align*}
By the definition of $f_n^\delta$,
\begin{align*}
D_1(t,n,\delta)&=\left|\mathbb{E}_0^Y\left[\frac{n(|Z_t|-\delta)+1}{|Z_t|^\beta}\1_{\{\delta-\frac1n<|Z_t|<\delta\}}\right]\right|\\
&\leq\frac{2}{(\delta-\frac1n)^\beta}\mathbb{P}_0^Y\left(\delta-\frac1n<|Z_t|<\delta\right).
\end{align*}

For fixed $n$ and $\delta$, the function $g:\R\to [0, \infty)$
$$
g(x):=|x|^{-\beta}f_n^\delta(|x|)
$$
is bounded and continuous. Since $Z_t\xrightarrow{d}Z$, it follows that
$$
D_2(t,n,\delta)\longrightarrow0,\quad \text{ as } t\to\infty.
$$
Moreover, by the local central limit theorem (see \cite[Chapter~II.7, Proposition~9]{S1964}),
$$
t^{1/2}p_t(0,0)\longrightarrow\frac{1}{\sqrt{2\pi}}, \quad \text{ as } t\to\infty,
$$
and hence
$$
t^{\beta/2}p_t(0,0)\longrightarrow0, \quad \text{ as } t\to\infty.
$$

Taking the $\limsup$ as $t\to\infty$ therefore gives
\begin{align*}
\limsup_{t\to\infty}
\left|\mathbb{E}_0^Y\left[\frac{1}{t^{-\beta/2}+|Z_t|^\beta}\right]-\mathbb{E}[|Z|^{-\beta}]\right|&\leq\frac{c}{1-\beta}\delta^{1-\beta}
+
\frac{2}{(\delta-\frac1n)^\beta}\mathbb{P}\left(\delta-\frac1n<|Z|<\delta\right)\\
&\quad+D_3(n,\delta)+D_4(\delta).
\end{align*}

Now let $n\to\infty$. Since $|Z|$ has a continuous distribution,
$$
\mathbb{P}\left(\delta-\frac1n<|Z|<\delta\right)\longrightarrow0, \quad \text{ as } n\to \infty,
$$
and, by dominated convergence theorem,
$$
D_3(n,\delta)\longrightarrow0 , \quad \text{ as } n\to \infty.
$$

Finally, since $|Z|^{-\beta}$ is integrable for $\beta<1$,
$$
D_4(\delta)=\mathbb{E}\left[|Z|^{-\beta}\mathbf{1}_{\{|Z|<\delta\}}\right]\longrightarrow0\qquad\text{as }\delta\downarrow0
$$
and $\frac{c}{1-\beta}\delta^{1-\beta} \to 0$ as $\delta\downarrow0$, since $\beta<1$.
Therefore, letting $\delta\downarrow0$ proves \eqref{eqn:singular-limit}, and hence \eqref{eqn:mtlimit}. \end{proof}

\bibliography{references}
\bibliographystyle{alpha}

\bigskip
\noindent
\textbf{Pradeeptha R Jain}\\
International Centre for Theoretical Sciences (ICTS) - TIFR,\\
Survey No. 151, Shivakote, Hesaraghatta Hobli,\\
Bengaluru - 560 089, India.\\
\textit{Email:} pradeeptha.jain@icts.res.in\\

\end{document}